\documentclass[journal]{IEEEtran}

\usepackage{graphicx}
\usepackage{booktabs}
\usepackage{tabularx}
\usepackage{refcount}
\usepackage{cite}
\usepackage{url}
\usepackage{amsmath}
\interdisplaylinepenalty=2500
\usepackage{amssymb}
\usepackage{enumitem}
\usepackage{bm}
\usepackage{algorithm,algorithmic}

\usepackage{hyperref}
\hypersetup{
  colorlinks   = true, 
  urlcolor     = blue, 
  linkcolor    = blue, 
  citecolor   = red 
}
\usepackage[capitalise]{cleveref}
\crefname{equation}{}{}
\crefname{assum}{Assumption}{Assumptions}
\crefname{theo}{Theorem}{Theorems}
\usepackage{autonum}
\usepackage{color}\definecolor{RED}{rgb}{1,0,0}\definecolor{BLUE}{rgb}{0,0,1}
\usepackage[caption=false,subrefformat=parens,labelformat=parens]{subfig}
\DeclareMathOperator*{\minimize}{minimize}

\DeclareMathOperator*{\st}{subject\ to}
\DeclareMathOperator*\argmin{\text{argmin}}

\def\citet #1{\cite{#1}}
\def\maxn #1{\|#1\|_{\text{max}}}

\def\twon #1{\|#1\|}

\def\tr{\text{tr}}
\def\ln{\text{ln}}

\newcommand{\mathletter}[1]{%
	\expandafter\newcommand\csname b#1\endcsname{\mathbb #1}
	\expandafter\newcommand\csname c#1\endcsname{\mathcal #1}
	\expandafter\newcommand\csname f#1\endcsname{\mathfrak #1}
	\expandafter\newcommand\csname til#1\endcsname{\widetilde #1}
	\expandafter\newcommand\csname ha#1\endcsname{\widehat #1}
	\expandafter\newcommand\csname bf#1\endcsname{\bf #1}
	\expandafter\newcommand\csname s#1\endcsname{\mathsf #1}
}%
\def\mathletters#1{\mathlettersB #1,,}
\def\mathlettersB#1,{\ifx,#1,\else\mathletter #1\expandafter\mathlettersB\fi}
\mathletters{A,B,C,D,E,F,G,H,I,J,K,L,M,N,O,P,Q,R,S,T,U,V,W,X,Y,Z}

\newcommand{\mathletterl}[1]{%
	\expandafter\providecommand\csname v#1\endcsname{\vec{#1}}
}%
\def\mathlettersl#1{\mathlettersC #1,,}
\def\mathlettersC#1,{\ifx,#1,\else\mathletterl #1\expandafter\mathlettersC\fi}
\mathlettersl{a,b,c,d,e,f,g,h,i,j,k,l,m,n,o,p,q,r,s,t,u,v,w,x,y,z}

\def\bea{\begin{equation}\begin{alignedat}{-1}}
\def\ena{\end{alignedat}\end{equation}}
\def\bee{\begin{equation}}
\def\ene{\end{equation}}

\renewcommand{\vec}[1]{\mathbf{#1}}

\newtheorem{theo}{Theorem}
\newtheorem{lemma}{Lemma}
\newtheorem{assum}{Assumption}

\newtheorem{remark}{Remark}

\newtheorem{example}{Example}
\newenvironment{proof}{\begin{IEEEproof}}{\end{IEEEproof}}

\def\T{\mathsf{T}}

\def\bone{{\mathbf{1}}}
\def\bzero{{\mathbf{0}}}

\newcommand{\p}{p}

\begin{document}

\title{Innovation Compression for Communication-efficient Distributed Optimization with Linear Convergence}
\author{Jiaqi~Zhang,
	Keyou~You,~\IEEEmembership{Senior Member,~IEEE,}
	and~Lihua~Xie,~\IEEEmembership{Fellow,~IEEE} 
	\thanks{Jiaqi Zhang and Keyou You  are with the Department of Automation, Beijing National Research Center for Information Science and Technology, Tsinghua University, Beijing 100084, China. E-mail: zjq16@mails.tsinghua.edu.cn, youky@tsinghua.edu.cn (Corresponding author: Keyou You). }
	\thanks{Lihua Xie is with the School of Electrical and Electronic Engineering, Nanyang Technological University, Singapore
	639798, Singapore. e-mail: elhxie@ntu.edu.sg.}
}
\maketitle

\begin{abstract}
Information compression is essential to reduce communication cost in distributed optimization over peer-to-peer networks. This paper proposes a {\em co}mmunication-efficient {\em l}inearly convergent {\em d}istributed (COLD) algorithm to solve strongly convex optimization problems. By compressing innovation vectors, which are the differences between decision vectors and their estimates, COLD is able to achieve linear convergence for a class of $\delta$-contracted compressors. We explicitly quantify how the compression affects the convergence rate and show that COLD matches the same rate of its uncompressed version. To accommodate a wider class of compressors that includes the binary quantizer, we further design a novel dynamical scaling mechanism and obtain the linearly convergent Dyna-COLD. Importantly, our results strictly improve existing results for the quantized consensus problem. Numerical experiments demonstrate the advantages of both algorithms under different compressors.
\end{abstract}

\begin{IEEEkeywords}
Distributed optimization; compression; linear convergence; innovation.
\end{IEEEkeywords}

\section{Introduction}

We consider the following distributed optimization problem over a network of $n$ interconnected nodes
\bee\label{original}
\minimize_{\vx\in\bR^d}\ f(\vx)\triangleq\sum_{i=1}^n f_i(\vx),
\ene
where the local function $f_i$ is only known to node $i$. Each node aims to find an optimal solution $\vx^\star\in\argmin f(\vx)$ by communicating with only a subset of nodes that are defined as its neighbors. Such a distributed model has been shown to achieve promising results in formation control \cite{li2018dynamic}, multi-agent consensus \cite{li2013consensus}, networked resource allocation \cite{zhang2020distributed}, and empirical risk minimization problems \cite{zhang2019decentralized}.

Till now, many novel algorithms have been proposed by transmitting messages with infinite precision, such as DGD \cite{nedic2009distributed}, EXTRA \cite{shi2015extra}, NIDS \cite{li2019decentralized}, DIGing \cite{nedic2017achieving,qu2017harnessing}, to name a few.  As nodes iteratively communicate with neighbors, communication cost can be a bottleneck for the efficiency of distributed optimization. 

To resolve it, a compressor $\sQ(\cdot):\bR^d\rightarrow\bR^d$ is unavoidable for each node to send compressed messages which can be usually encoded with a moderate number of bits. For example, the output of a binary quantizer (i.e. $\sQ(x)=1$ if $x\geq 0$ and $\sQ(x)=-1$ otherwise) needs only one bit to encode. Since $\sQ(\cdot)$ is typically highly nonlinear and nonsmooth, how to design a provably convergent distributed algorithm with efficient communication compressions has attracted an increasing attention. To this end, quantized variants of the DGD \cite{nedic2009distributed} have been proposed for convex problems \citet{nedic2008distributed,carli2010gossip,reisizadeh2019exact,li2020computation,yi2014quantized,doan2020convergence} and non-convex problems \citet{tang2018communication,lu2020moniqua,taheri2020quantized}, respectively. Unfortunately, their convergence rates are constrained by the sublinear rate of DGD. Though linear convergence has been achieved in \cite{lee2018finite,magnusson2020maintaining,xiong2021quantized},  they are restricted to a specified compressor which is then extended to a class of stochastic compressors in \citet{li2021compressed,liu2020linear,kovalev2020linearly}. However, their stochastic compressors are assumed to be unbiased and $\delta$-contracted  in the sense that $\bE[\sQ(\vx)]=\vx$ and $\bE\|\sQ(\vx)-\vx\|^2\leq\delta\|\vx\|^2,\ \forall \vx\in\bR^d$. Clearly, this condition excludes some important compressors, including deterministic compressors such as the binary quantizer. In fact, the unavoidable round-off error in finite-precision implementation can be regarded as the consequence of a biased compressor.

In this paper, we propose two communication-efficient distributed algorithms, namely COLD and Dyna-COLD, that converge linearly under two important classes of compressors. In comparison, our contributions can be summarized as follows:
\begin{itemize}[leftmargin=*,topsep=0pt,noitemsep]
	\item We notice that it is efficient to compress the innovation---the difference between a decision variable and its estimate, as it is expected to eventually decrease to zero. By leveraging such an observation, COLD is the first distributed algorithm that achieves linear convergence for $\mu$-strongly convex and $L$-smooth functions with both unbiased and biased $\delta$-contracted compressors, which is in sharp contrast to \cite{nedic2008distributed,carli2010gossip,reisizadeh2019exact,li2020computation,yi2014quantized,doan2020convergence,lee2018finite,magnusson2020maintaining,xiong2021quantized,liu2020linear,kovalev2020linearly,li2021compressed} as they are only applicable to some specific biased or unbiased compressors.

	\item The effect of compression on the convergence rate of COLD is explicitly revealed, e.g., the number of iterations to find an $\epsilon$-optimal solution is $\cO\big(\max\{\frac{L}{\mu},\frac{1}{(1-\delta)^2\rho}\}\log\frac{1}{\epsilon}\big)$ for unbiased $\delta$-contracted compressors, where $\delta<1$ characterizes the compression resolution and $\rho<1$ is the spectral gap of the network. Specifically, $\delta=0$ corresponds to the uncompressed case under which COLD matches the rate of NIDS \citet{li2019decentralized} with infinite precision.

	\item By designing a novel dynamical scaling mechanism, we obtain linearly convergent Dyna-COLD  for a wider class of compressors, which covers the quantizers  in \cite{lee2018finite,magnusson2020maintaining,xiong2021quantized}. Numerical experiments validate the linear convergence of Dyna-COLD even for the 1-bit binary quantizer and show its advantages over competitors.

	\item When \eqref{original} is reduced to the consensus problem, i.e., $f_i(\vx)=\|\vx-\vx_i\|^2$, we strictly improve the rate of the state-of-the-art CHOCO-GOSSIP method \cite{koloskova2019decentralized} from $\cO(\frac{1}{(1-\delta)\rho^2}\log\frac{1}{\epsilon})$ to $\cO(\frac{1}{(1-\delta)\rho}\log\frac{1}{\epsilon})$, which matches the best rate in \citet{xiao2004fast}. By the dynamic scaling, we generalize and strengthen the theoretical result in \cite{li2011distributed} which was established for a finite-level uniform quantizer. 
	\end{itemize}

The rest of this paper is organized as follows. \cref{sec3} formalizes the problem and introduces two important classes of compressors. \cref{sec4} develops the COLD  and provides its theoretical results. \cref{sec4b} introduces the Dyna-COLD with the dynamic scaling mechanism and shows its linear convergence rate. \cref{sec5} validates the theoretical finding via numerical experiments. Some concluding remarks are drawn in \cref{sec6}. A preliminary version of this paper has been submitted to CDC \cite{linearly2021zhang}, which covers only one class of compressors and does not include the dynamic scaling method.

{\bf Notation:} We use $\nabla f$ to denote the gradient of $f$. $[A]_{ij}$ denotes the $(i,j)$-th element of $A$. $\lambda_j(A)$ denotes the $j$-th largest eigenvalue of $A$. $\|\cdot\|_{\p}$ denotes the vector $p$-norm. Let $\langle X,Y\rangle_M\triangleq\tr(X^\T M Y)$, $\|A\|_M^2\triangleq\langle A,A\rangle_M$ and $\|A\|^2\triangleq\|A\|_I^2$, where $M$ is a scalar or a symmetric matrix, and $I$ is the identity matrix. Define the matrix norm $\|A\|_\text{max}\triangleq \max_{i}\|\va_{i\cdot}\|_{\p}$, where $A\in\bR^{n\times d}$ and $\va_{i\cdot}$ is the $i$-th row of $A$. $\bone$ denotes a vector with all ones. $\cO(\cdot)$ denotes the big-O notation. We say a sequence $\{x^k\}$ converges linearly to an optimal solution $x^\star$ if $\|x^k-x^\star\|^2=\cO((1-\alpha)^k)$ for some $\alpha\in(0,1]$, which implies that an $\epsilon$-optimal solution can be obtained in $\widehat k=\cO(\frac{1}{\alpha}\log\frac{1}{\epsilon})$ iterations, i.e., $\|x^{k}-x^\star\|^2\leq\epsilon$ for all $k\geq \widehat k$.

\section{Problem Formulation}\label{sec3}

\subsection{Compressors}

A compressor $\sQ(\cdot):\bR^d\rightarrow\bR^d$ is a (possibly stochastic) mapping either for quantization or sparsification, and its output can be usually encoded with much fewer bits than its input. Instead of focusing on a specific compressor, this work considers the following two broad classes of compressors.
\begin{assum}\label{assum1}
	For some $\delta\in[0,1)$, $\sQ$ satisfies that
	\bee
	\bE[\|\sQ(\vx)-\vx\|^2]\leq\delta\|\vx\|^2,\ \forall \vx\in\bR^d
	\ene
	where $\bE[\cdot]$ denotes the expectation over $\sQ$.
\end{assum}

\cref{assum1} requires the mean square of the \emph{relative} compression error to be bounded, which is  called $\delta$-contracted compressors in \citet{tang2018communication,koloskova2019decentralized,liu2020linear,taheri2020quantized}. 
\begin{assum}\label{assum3}
	For some $\delta\in[0,1)$, $\sQ$ satisfies that
	\bea
	\|\sQ(\vx)-\vx\|_{\p}\leq\delta,\ \forall \vx\in\{\vx\in\bR^d:\|\vx\|_{\p}\leq 1\}.
	\ena
\end{assum}

In \cref{assum3}, the \emph{absolute} compression error over the unit ball is bounded by $\delta$. Since we do not impose any condition on $\sQ(\vx)$ for $\|\vx\|_{\p}>1$, \cref{assum3} is strictly weaker than the deterministic version of \cref{assum1}, and covers the compressors in \cite{nedic2008distributed,carli2010gossip,reisizadeh2019exact,li2020computation,yi2014quantized,doan2020convergence,lee2018finite,magnusson2020maintaining,xiong2021quantized}. To our best knowledge, Moniqua \cite{lu2020moniqua} is the only distributed method that converges sublinearly under a special case $p=\infty$ of \cref{assum3}. In this work, we design a dynamical scaling mechanism to achieve linear convergence in \cref{sec4b}. 

A compressor $\sQ$ is \emph{unbiased} if $\bE[\sQ(\vx)]=\vx$ for all $\vx\in\bR^d$. Though unbiasedness is essential to the linear convergence in \citet{liu2020linear,kovalev2020linearly,li2021compressed}, the widely-used deterministic quantizers are  biased.  Round-off errors are inevitable in a finite-precision processor and result in a biased compressor, which satisfies both assumptions when $\vx$ is large, but only satisfies \cref{assum3} if $\vx$ lies in the subnormal range (e.g. $|\vx|<2^{-126}$ for single-precision, IEEE 754 standard \cite{ieee754}). In this view, biased compressors have more practical significance. Some commonly used compressors are given below.

\begin{example}[Compressors]\label{exap_compressor}
	\begin{enumerate}[leftmargin=*,label=(\alph*),noitemsep,nolistsep]
		\item \textbf{Nearest neighbor rounding:}\label{compressor1} Given a set of numbers $\cQ$, $\sQ(\vx)$ returns the nearest number in $\cQ$ for each coordinate of $\vx$, i.e., $[\sQ(\vx)]_i=\argmin_{x\in\cQ}{|[\vx]_i-x|}$. The uniform quantizers \cite{li2011distributed} and logarithmic quantizers \cite{elia2001stabilization} fall in this category by letting $\cQ$ be the set of numbers spaced evenly on a linear scale and a log scale, respectively. Typically, this class of compressors are biased, and may satisfy only \cref{assum3}. Particularly, the 1-bit binary quantizer, i.e., $\sQ(x)=0.5$ for $x\geq 0$ and $\sQ(x)=-0.5$ for $x< 0$, satisfies \cref{assum3} with $\delta=0.5$ and $p=\infty$.
		\item \textbf{Unbiased stochastic quantization:}\label{compressor2}  Let $\text{sgn}(\cdot)$ and $|\cdot|$ be the element-wise sign function and absolute function, respectively. The compressor is given by
		      $
		      \sQ_u(\vx)=\frac{\|\vx\|_p\cdot\text{sgn}(\vx)}{u}\circ\left\lfloor\frac{u|\vx|}{\|\vx\|_p}+\xi\right\rfloor,
		      $
		      where $\circ$ denotes the Hadamard product, $u\geq 1$, $\xi$ is a random vector uniformly sampled from $[0,1]^d$.
		      This compressor is unbiased and satisfies \cref{assum1} with $\delta={d}/{4u^2}$ for $p=2$ \cite{liu2020linear,alistarh2017qsgd}, and meets \cref{assum3} with $\delta=\frac{1}{u}\sqrt[\leftroot{-2}\uproot{2}p]{d}$. A common choice is $u=2^{l-1}$ for some integer $l\geq 1$, where each coordinate can be encoded with $l+1$ bits. Transmitting $\sQ_u(\vx)$ needs $(l+1)d+b$ bits if a scalar can be transmitted with $b$ bits with sufficient precision.
		\item \textbf{Biased quantization:} A proper scaling can reduce $\delta$ but introduce bias. In particular,  $\sQ_b(\vx)\triangleq\sQ_u(\vx)/\phi$ with $\phi> 1$ is biased. It satisfies \cref{assum1} with $\delta=1-\phi^{-1}$ for $\phi= 1+{d}/{(4u^2)}$ and $p=2$ \cite{koloskova2019decentralized}, and satisfies \cref{assum3} with $\delta=\frac{1}{u+\sqrt[\leftroot{-2}\uproot{2}p]{d}}\sqrt[\leftroot{-2}\uproot{2}p]{d}$ for $\phi= 1+{1}/{u}\cdot\sqrt[\leftroot{-2}\uproot{2}p]{d}$ and $\xi=\bone$. \label{compressor3}
		\item \textbf{Sparsification:}  Randomly selecting $l\leq d$ coordinates of $\vx$ or selecting the largest $l$ coordinates in magnitude gives a biased compressor, which meets \cref{assum1} with $\delta=1-{l}/{d}$ \cite{stich2018sparsified}, and \cref{assum3} with $\delta=(1-{l}/{d})^{1/p}$. \label{compressor4}
		\item \textbf{Compression for multimedia data:} The structure of $\vx$ can be exploited for efficient compression. For example, if $\vx$ can be viewed as an image, then the standard JPEG method is a good compressor which is biased with an adjustable compression error $\delta$ \cite{minguillon2001jpeg}.
	\end{enumerate}
\end{example}
\subsection{Cost functions and the communication network}
To achieve linear convergence, we focus on strongly convex problems of \cref{original} in this work.
\begin{assum}\label{assum4}
	Each local cost function $f_i$ is $\mu$-strongly convex and $L$-Lipschitz smooth, where $0<\mu\leq L$. That is, for all $\vx,\vy\in\bR^d,i\in\cV$,
	\bea\label{eq_lipschitz}
	&f_i(\vy)\geq f_i(\vx)+\langle \nabla f_i(\vx),\vy-\vx \rangle +\frac{\mu}{2}\|\vx-\vy\|^2,\\
	&\|\nabla f_i(\vx)-\nabla f_i(\vy)\|\leq L\|\vx-\vy\|.
	\ena
\end{assum}

Under \cref{assum4}, Problem \cref{original} has a unique optimal solution $\vx^\star\triangleq \argmin f(\vx)$ \cite{boyd2004convex}.

The interaction between nodes is modeled as an undirected network $\cG=(\cV,\cE, W)$, where $\cV=\{1,2,\cdots,n\}$ is the set of nodes, $\cE\subseteq\cV\times\cV$ is the set of edges, and edge $(i,j)\in\cE$ if and only if nodes $i$ and $j$ can directly exchange compressed messages with each other. $\cG$ is called connected if there exists a path between any pair of nodes. We denote the set of nodes that directly communicate with node $i$ as the neighbors of $i$, i.e., $\cN_i=\{j|(j,i)\in\cE\}\cup\{i\}$. $W\in\bR^{n\times n}$ is an adjacency matrix of $\cG$, i.e., $[W]_{ij}=0$ if $(i,j)\notin\cE$. It captures the interactions between neighboring nodes, e.g., $WX$ can be evaluated via local communications, where $X=[\vx_1,\cdots,\vx_n]^\T$ and $\vx_i$ denotes the local copy of node $i$.  We make the following standard assumption  \cite{shi2015extra,li2019decentralized}:
\begin{assum}\label{assum2}
	 $\cG$ is connected, and $W$ satisfies that
\begin{enumerate}[leftmargin=*,label=(\alph*),noitemsep,nolistsep]
		\item (Symmetry)  $W=W^\T$;
		\item (Consensus property)  $\text{null}(I-W)=\text{span}(\bone_n)$;
		\item (Spectral property)  $-I\prec W\preceq I$.
	\end{enumerate}
\end{assum}

An eligible $W$ can be constructed by the Metropolis rule \cite{shi2015extra,xiao2004fast}.  \cref{assum2} implies that $-1<\lambda_n(W)\leq\lambda_2(W)<1$. We define the spectral gap $\rho=1-\lambda_2(W)\in[0,1)$ to characterize the effect of $W$ on the convergence rate. Under \cref{assum2}, Problem \cref{original} is equivalent to the following optimization problem
\bea\label{original2}
&\minimize_{\vec x\in\bR^d}\ F(X)\triangleq\sum_{i=1}^n f_i(\vx_i),\\
&\st\ (I-W)X=0.
\ena
For simplicity, let $$\nabla F(X)=[\nabla f_1(\vx_1),\cdots,\nabla f_n(\vx_n)]^\T\in\bR^{n\times d}$$ and we slightly abuse the notation by letting $\sQ(X)=[\sQ(\vx_1),\cdots,\sQ(\vx_n)]^\T\in\bR^{n\times d}$ where  $X=[\vx_1,\cdots,\vx_n]^\T\in\bR^{n\times d}$. Then, $\bE\|\sQ(X)-X\|^2\leq\delta\|X\|^2$ if $\sQ$ satisfies \cref{assum1}, and $\|\sQ(X)-X\|_\text{max}\leq\delta$ if $\|X\|_\text{max}\leq 1$ and $\sQ$ satisfies \cref{assum3}.

\section{The communication-efficient COLD}\label{sec4}
This section focuses on $\delta$-contracted compressors under \cref{assum1}, where we propose COLD and explicitly derive its linear convergence rate. The novelty of COLD lies in compressing an \emph{innovation} vector, which is the discrepancy between the true value of a decision vector and its estimate. Note that innovation is an important concept in the Kalman filtering theory where it represents the discrepancy between the measurements vector and their optimal predictor \cite{anderson2012optimal}. Since the innovation is expected to asymptotically decrease to $0$, the compression error vanishes eventually, which is key to the development of COLD. We elaborate this idea by firstly revisiting the distributed consensus problem and provide a strictly improved rate over CHOCO-GOSSIP \cite{koloskova2019decentralized}.

\subsection{Compressed consensus with an explicitly improved convergence rate}

Distributed consensus is an important special case of Problem \cref{original} with $f_i(\vx)=\|\vx-\vx_i\|^2,\forall i\in\cV$, where nodes are supposed to converge to the average $\bar\vx=\frac{1}{n}\sum_{i=1}^n\vx_i$. A celebrated algorithm to solve it is proposed in \citet{xiao2004fast} with the update rule $\vx_i^{k+1}= \sum_{j\in\cN_i}[W]_{ij}\vx_j^k, \forall i\in\cV$ and $\vx_i^0=\vx_i$. Let $X^k=[\vx_1^k,\cdots,\vx_n^k]^\T$. Its compact form is given as
\bee\label{alg_con3}
X^{k+1}=WX^k.
\ene
Under \cref{assum2}, it follows from \citet{xiao2004fast} that $\vx_i^k$ of \cref{alg_con3} converges to $\bar\vx$ at a linear rate $\cO(\frac{1}{\rho}\log\frac{1}{\epsilon})$\footnote{Such a rate actually corresponds to a faster version $X^{k+1}=\frac{1}{2}(I+W)X^k$ if $|\lambda_n(W)|>\frac{1}{2}(1+|\lambda_2(W)|)$.} to find an $\epsilon$-solution.

However, it requires to transmit $\vx_i^k$ with infinite precision communication. To reduce communication cost by only transmitting $\sQ(\vx_i^k)$, Ref. \citet{aysal2008distributed} proposes $X^{k+1}=W\sQ(X^k)$, which was then improved in \citet{carli2010gossip} as $X^{k+1}=X^k+(W-I)\sQ(X^k)$.   Since $\vx_i^k$ does not converge to zero, we cannot expect the compression error $\sQ(\vx_i^k)-\vx_i^k$ to vanish. Thus, both algorithms cannot converge to $\bar\vx$.

The first linearly convergent algorithm with a $\delta$-contracted compressor is proposed in \citet{koloskova2019decentralized} as follows:
\bea\label{alg_con1}
\widehat X^{k+1}&=\widehat X^k+\sQ(X^k-\widehat X^k),\\
X^{k+1}&=X^k+\gamma(W-I)\widehat X^{k+1},
\ena
where $\gamma$ is a tunable stepsize and $\vx_i^0=\vx_i, \widehat \vx_i^0=0$. We owe the success of \cref{alg_con1} to the compression on the innovation $X^k-\widehat X^k$, where $\widehat X^k$ can be viewed as an estimate of $X^k$, and reduces to $X^k$ if the compression error is zero. Since the fixed points of $\widehat\vx_i^k$ and $\vx_i^k$ in \cref{alg_con1} are both $\bar\vx$, the innovation as well as its compression error is expected to vanish if nodes have achieved consensus. In this view, we obtain a strictly better convergence rate for \cref{alg_con1} than that of \citet{koloskova2019decentralized} in the following theorem, the proof of which is relegated to \cref{adx1}. 

\begin{theo}\label{theo2}
	Let \cref{assum1,assum2} hold, $\{X^k\}$ and $\{\widehat X^k\}$ be generated by \cref{alg_con1}, $\gamma\in\big(0,\frac{1-\delta}{(1+\delta)(1-\lambda_n(W))}\big)$, and $\ve^k=\bE\big(q\|X^k-\bone\bar \vx^\T\|^2+\|\widehat X^{k+1}- X^k\|^2\big)$, where $q=\frac{(1+\gamma(1-\lambda_n(W)))\delta}{1-\gamma(1-\lambda_n(W))}>0$. Then, it holds that
	\bee
	\ve^{k+1}\leq \sigma \ve^{k} 
	\ene
	where $\sigma=\max\big\{1-\frac{2\gamma\rho}{1+\gamma(1-\lambda_n(W))},\ \frac{\delta(1+\gamma(1-\lambda_n(W)))}{1-\gamma(1-\lambda_n(W))}\big\}<1$. In particular, if $\gamma=\frac{1-\delta}{(3+\delta)(1-\lambda_n(W))}$, then 
	\bee
	\sigma=1-\frac{(1-\delta)\rho}{2(1-\lambda_n(W))}.
	\ene
\end{theo}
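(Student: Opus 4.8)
The plan is to recast the dynamics of \cref{alg_con1} in terms of two error matrices and to exhibit $\ve^k$ as a Lyapunov function that contracts by the factor $\sigma$. First I would introduce the consensus error $C^k\triangleq X^k-\bone\bar\vx^\T$, the innovation $Y^k\triangleq X^k-\widehat X^k$, and the compression error $E^k\triangleq\widehat X^{k+1}-X^k=\sQ(Y^k)-Y^k$, so that the second term of $\ve^k$ is exactly $\bE\|E^k\|^2$. Because $\bone^\T(W-I)=0$, the column average of $X^k$ is preserved and $\bar\vx$ is constant along the iterations. Using $(W-I)\bone=0$ to discard the average component, the updates give the two \emph{linear} recursions
\[
C^{k+1}=(I+\gamma(W-I))C^k+\gamma(W-I)E^k,\qquad Y^{k+1}=\gamma(W-I)C^k+(\gamma(W-I)-I)E^k,
\]
which express both $C^{k+1}$ and $Y^{k+1}$ as deterministic linear functions of the pair $(C^k,E^k)$.

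Next I would bring in the compressor. Letting $\cF^{k}$ collect all compression randomness up to iteration $k-1$, so that $C^k$ and $Y^k$ are $\cF^k$-measurable, \cref{assum1} yields $\bE[\|E^{k}\|^2\mid\cF^{k}]\leq\delta\|Y^{k}\|^2$. Applying this at step $k+1$ and using the tower rule,
\[
\ve^{k+1}=\bE[q\|C^{k+1}\|^2+\|E^{k+1}\|^2]\leq\bE[q\|C^{k+1}\|^2+\delta\|Y^{k+1}\|^2].
\]
Since the right-hand side is a quadratic form in $(C^k,E^k)$ after substituting the recursions, it suffices to prove the \emph{pointwise} inequality $q\|C^{k+1}\|^2+\delta\|Y^{k+1}\|^2\leq\sigma(q\|C^k\|^2+\|E^k\|^2)$; taking expectations then delivers $\ve^{k+1}\leq\sigma\ve^k$.

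To establish this quadratic inequality I would diagonalize the symmetric matrix $W$. Writing $\beta_j\triangleq 1-\lambda_j(W)$, the recursions decouple across the eigenvectors of $W$: the component of $C^k$ along $\bone$ vanishes identically, and for each remaining mode (with $\beta_j\in[\rho,\beta_n]$, $\beta_n=1-\lambda_n(W)$) the claim reduces to the scalar condition that $\sigma\,\diag(q,1)-M_\beta\succeq0$, where the symmetric matrix $M_\beta$ has diagonal entries $P=q(1-\gamma\beta)^2+\delta\gamma^2\beta^2$ and $R=q\gamma^2\beta^2+\delta(1+\gamma\beta)^2$ and off-diagonal entry $Q=\gamma\beta[\delta(1+\gamma\beta)-q(1-\gamma\beta)]$. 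The $\bone$-mode contributes only $\delta\|\cdot\|^2\leq\sigma\|\cdot\|^2$, which holds since $\sigma\geq q\geq\delta$. The choice $q=\frac{(1+\gamma\beta_n)\delta}{1-\gamma\beta_n}$ is engineered so that $Q$ vanishes and $R=q$ exactly at $\beta=\beta_n$; the two entries of the $\max$ defining $\sigma$ are then the two binding diagonal constraints—$1-\frac{2\gamma\rho}{1+\gamma\beta_n}$ from the slowest consensus mode $\beta=\rho$, and $q$ itself from the innovation at $\beta=\beta_n$. Finally, substituting $\gamma=\frac{1-\delta}{(3+\delta)\beta_n}$ gives $q=\frac{2\delta}{1+\delta}$ and first entry $1-\frac{(1-\delta)\rho}{2\beta_n}$; the latter dominates because $1-\frac{(1-\delta)\rho}{2\beta_n}\geq\frac{2\delta}{1+\delta}$ reduces, using $\rho\leq\beta_n$, to $(1-\delta)^2\geq0$, which produces the stated $\sigma=1-\frac{(1-\delta)\rho}{2(1-\lambda_n(W))}$.

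I expect the main obstacle to be verifying the $2\times2$ positive semidefiniteness \emph{uniformly} over the spectrum $\beta\in[\rho,\beta_n]$: away from $\beta_n$ the cross term $Q(\beta)$ is nonzero, so beyond the two diagonal constraints one must also control the determinant condition $(\sigma q-P)(\sigma-R)\geq Q^2$. Checking that the admissible range $\gamma<\frac{1-\delta}{(1+\delta)\beta_n}$ is precisely what keeps this determinant condition valid across all modes—while the worst case over $\beta$ is attained at the two endpoints recorded in $\sigma$—is the delicate part; the remaining manipulations are routine algebra.
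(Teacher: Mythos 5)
Your reduction is sound as far as it goes, and it is essentially the paper's own argument rewritten in the eigenbasis of $W$: the recursions for $C^k$ and $Y^k$, the conditional application of \cref{assum1}, the per-mode $2\times 2$ condition $\sigma\,\diag(q,1)\succeq M_\beta$, the handling of the $\bone$-mode, and the closing computation for $\gamma=\frac{1-\delta}{(3+\delta)(1-\lambda_n(W))}$ (which indeed gives $q=\frac{2\delta}{1+\delta}$ and the dominance check $(1-\delta)^2\geq 0$) are all correct. The genuine gap is the step you explicitly postpone: controlling the off-diagonal entry $Q(\beta)$ uniformly over the spectrum. This is not residual ``routine algebra''---it \emph{is} the central algebraic fact of the theorem---and your conjectured mechanism for it is wrong on two counts. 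First, the stepsize range $\gamma<\frac{1-\delta}{(1+\delta)(1-\lambda_n(W))}$ plays no role in the positive semidefiniteness; it is only what makes $q<1$ and hence $\sigma<1$. Second, checking the determinant condition $(\sigma q-P)(\sigma-R)\geq Q^2$ at the two endpoints $\beta\in\{\rho,\beta_n\}$ says nothing about interior eigenvalues, so the verification strategy you sketch is logically insufficient as stated.

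What closes the gap is a sign property of $Q$ that you did not record: the choice of $q$ makes $Q$ vanish at $\beta_n\triangleq 1-\lambda_n(W)$ \emph{and} makes it nonpositive on the whole spectrum. Indeed, using $q(1-\gamma\beta_n)=\delta(1+\gamma\beta_n)$,
\bee
Q(\beta)=\gamma\beta\big[\delta(1+\gamma\beta)-q(1-\gamma\beta)\big]
=\frac{2\delta\gamma^2\,\beta(\beta-\beta_n)}{1-\gamma\beta_n}\leq 0,\quad \forall\beta\in[0,\beta_n].
\ene
Then write $M_\beta=\diag(P-Q,\,R-Q)+Q\bone\bone^\T$ and note $P-Q=q-(q+\delta)\gamma\beta$ and $R-Q=\delta+(q+\delta)\gamma\beta$; since $Q\leq 0$ and $\bone\bone^\T\succeq 0$, you get $M_\beta\preceq\diag\big(q-(q+\delta)\gamma\beta,\;\delta+(q+\delta)\gamma\beta\big)$, so no determinant condition is ever needed. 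The first diagonal entry is decreasing in $\beta$ and equals $q\big(1-\frac{2\gamma\rho}{1+\gamma\beta_n}\big)$ at $\beta=\rho$; the second is increasing and equals $q$ at $\beta=\beta_n$; these are exactly the two entries of the $\max$ defining $\sigma$, which completes the contraction. This observation is precisely the diagonalized form of the paper's key step---the negative semidefiniteness of $\gamma(W-I)\big(qI-\delta I+(q+\delta)\gamma(W-I)\big)$, which is how \cref{lemma1,lemma2} are combined there with weight $q$. With this one fact inserted, your proof is complete and coincides with the paper's.
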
 

\begin{remark}
	\cref{theo2} shows that each node in \cref{alg_con1} exactly converges to the average at the linear rate $\cO(\frac{1}{(1-\delta)\rho}\log\frac{1}{\epsilon})$, which strictly improves the rate $\cO(\frac{1}{(1-\delta)\rho^2}\log\frac{1}{\epsilon})$ in \citet{koloskova2019decentralized}, and recovers the best rate of \cref{alg_con3} in terms of $\rho$\citet{xiao2004fast}. Moreover, \cref{theo2} shows the linear convergence of both innovations and consensus errors to zero, which is consistent with our observations.
\end{remark}

\subsection{Development of COLD}

An extension of \cref{alg_con1} for the distributed optimization problem \cref{original} is proposed in \citet{koloskova2019decentralized} by directly adding a gradient step in the update of $X^{k+1}$.
The resulting algorithm is a quantized version of DGD and can only converge sublinearly.

To achieve a faster convergence rate, we are motivated by the state-of-the-art NIDS \cite{li2019decentralized} that has a linear convergence rate $\cO(\max\{\frac{L}{\mu},\frac{1}{\rho}\}\log\frac{1}{\epsilon})$ to find an $\epsilon$-optimal solution of \cref{original}, i.e., $\|\vx_i-\vx^\star\|^2\leq\epsilon,\forall i\in\cV$ \cite{xu2020distributed}. NIDS can be written compactly as follows 
\bee\label{nids}
X^{k+1}=\tilW(2X^k-X^{k-1}-\gamma\nabla F(X^k)+\gamma\nabla F(X^{k-1})),
\ene
where $\tilW=\frac{1}{2}(I+W)$ and the first iteration is initialized as $X^1=X^0-\gamma\nabla F(X^0)$. Clearly, it requires transmitting the exact $X^k$ with infinite precision. Similar to the case of the consensus algorithm in \cref{alg_con3}, a straightforward compression of \cref{nids} by  transmitting $\sQ(X^k)$ instead of $X^k$ fails to ensure exact convergence.

We adopt the idea of compressing the innovation vector to design a communication-efficient variant of \cref{nids}. Let $Y^k=2X^k-X^{k-1}-\gamma\nabla F(X^k)+\gamma\nabla F(X^{k-1})$. Then, \cref{nids} can be rewritten as $X^{k+1}=\tilW Y^k$. This is different from \cref{alg_con3} in that each node $i$ updates its state as a weighted average of $\vy_j^k$ rather than $\vx_j^k$ for all $j\in\cN_i$. We can show that $Y^k$ converges to $X^k$ by checking the fixed points of \cref{nids}. Thus, it is reasonable to view $Y^k$ as an approximation of $X^k$. By replacing $X^k$ in \cref{alg_con1} with $Y^k$ and introducing one more stepsize $\tau$, we obtain  COLD in the following form:
\bea\label{alg_opt11}
Y^{k}&=2X^k-X^{k-1}-\gamma\nabla F(X^k)+\gamma\nabla F(X^{k-1}),\\
\widehat Y^{k+1}&=\widehat Y^k +\sQ(Y^k-\widehat Y^k),\\
X^{k+1}&=Y^k+\tau\gamma(W-I)\widehat Y^{k+1},
\ena
where $\widehat Y^k$ is an auxiliary vector to track $Y^k$. The iteration is initialized by $X^{1}=Y^0=X^0-\gamma \nabla F(X^0)$ and $\widehat Y^{1}=\sQ(Y^0)$. Note that the introduction of the tunable stepsize $\tau$ is critical in both theoretical analysis and practical performance. Intuitively, $\gamma$ acts like the stepsize in the standard gradient method and $\tau$ plays a similar role of the stepsize in \cref{alg_con1}.

By introducing another auxiliary vector $\psi_i^k\in\bR^d$ for each node and defining $\Psi^k=[\psi_1^k,\cdots,\psi_n^k]^\T\in\bR^{n\times p}$, COLD in \cref{alg_opt11} is equivalent to the following form
\begin{subequations}\label{alg_opt1}
	\begin{align}
		Y^k          & =X^k-\gamma \nabla F(X^k)-\gamma \Psi^k,\label{alg_opt1a}   \\
		\widehat Y^{k+1} & =\widehat Y^k +\sQ(Y^k-\widehat Y^k),\label{alg_opt1b}              \\
		\Psi^{k+1}   & =\Psi^k+\tau(I-W)\widehat Y^{k+1}\label{alg_opt1c},             \\
		X^{k+1}      & =X^k-\gamma\nabla F(X^k)-\gamma\Psi^{k+1},\label{alg_opt1d}
	\end{align}
\end{subequations}
and $\Psi^{1}=\Psi^{0}=0$ for $k=0$. The equivalence can be readily checked by eliminating $\Psi^k$ in \cref{alg_opt1}. COLD of this form is more desirable in implementation since it does not require node $i$ to re-compute or store $\nabla f_i(\vx_i^{k-1})$. The implementation details are summarized in \cref{cold}, where each node only transmits the compressed vector $\sQ(\vy_i^k-\widehat \vy_i^k)$ to its neighbors.
\begin{algorithm}[!t]
	\caption{The COLD --- from the view of node $i$}\label{cold}
	\begin{algorithmic}[1]
		\REQUIRE The initial point $\vx_i^0$. Set $\vx_i^{1}=\vx_i^0-\gamma \nabla f_i(\vx_i^0)$ and $\psi_i^1=\widehat \vy_i^{1}=\widetilde\vy_i^1=\bzero$.
		\FOR {$k=1,2,\cdots$}
		\STATE Compute $\vy_i^{k}=\vx_i^k-\gamma\nabla f_i(\vx_i^k)-\gamma\psi_i^k$, $\vq_i^k=\sQ(\vy_i^k-\widehat \vy_i^k)$ and $\widehat\vy_i^{k+1}=\widehat\vy_i^{k}+\vq_i^k$.
		\STATE Send $\vq_i^k$ to all neighbors and receive $\vq_j^k$ from each neighbor $j\in\cN_i$.
		\STATE Update
		\bea
		\widetilde\vy_i^{k+1}&=\widetilde\vy_i^{k}+\tau\left(\vq_i^k-\sum\nolimits_{j\in\cN_i}[W]_{ij}\vq_j^k\right)\\
		\psi_i^{k+1}&=\psi_i^k+\widetilde\vy_i^{k+1}\\
		\vx_i^{k+1}&=\vx_i^k-\gamma\nabla f_i(\vx_i^k)-\gamma\psi_i^{k+1}.
		\ena
		\ENDFOR
	\end{algorithmic}
\end{algorithm}

\subsection{Convergence result}

The following lemma shows the equivalence between fixed points of \cref{alg_opt1} and optimal solutions of \cref{original2}.

\begin{lemma}\label{fixed_point}
	\begin{enumerate}[label=(\alph*),noitemsep]
		\item $(X^\star,\Psi^\star,\widehat Y^\star)$ is a fixed-point of \cref{alg_opt1} if and only if $(I-W)X^\star=0, \widehat Y^\star=X^\star, \Psi^\star=-\nabla F(X^\star)$.
		\item $X^\star$ is an optimal solution of \cref{original2} if and only if $(I-W)X^\star=0$ and $\nabla F(X^\star)\in\text{range}(I-W)$.
	\end{enumerate}
\end{lemma}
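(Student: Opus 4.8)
The plan is to handle the two parts separately: part (a) is a direct fixed-point computation whose only genuine subtlety is what \cref{assum1} says about the compressor at the origin, while part (b) is the standard first-order optimality characterization for a convex program with affine equality constraints. For part (a) I would drop the iteration index and impose $X^{k+1}=X^k=X^\star$, $\Psi^{k+1}=\Psi^k=\Psi^\star$, $\widehat Y^{k+1}=\widehat Y^k=\widehat Y^\star$, writing $Y^\star$ for the common value of $Y^k$. This turns \cref{alg_opt1a,alg_opt1b,alg_opt1c,alg_opt1d} into four algebraic relations, which I would peel off in a convenient order. Reading \cref{alg_opt1d} at the fixed point and using $\gamma\neq 0$ gives $\Psi^\star=-\nabla F(X^\star)$; substituting this into \cref{alg_opt1a} then yields $Y^\star=X^\star$. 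The fixed-point form of \cref{alg_opt1c} together with $\tau\neq 0$ gives $(I-W)\widehat Y^\star=0$, and the fixed-point form of \cref{alg_opt1b} gives $\sQ(Y^\star-\widehat Y^\star)=0$.

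The crux is then converting $\sQ(Y^\star-\widehat Y^\star)=0$ into $\widehat Y^\star=X^\star$. \cref{assum1} supplies exactly the two facts I need: taking $\vx=0$ forces $\bE\|\sQ(0)\|^2\le 0$, i.e.\ $\sQ(0)=0$; and if $\sQ(\vz)=0$ for some $\vz$, then $\bE\|\sQ(\vz)-\vz\|^2=\|\vz\|^2\le\delta\|\vz\|^2$ with $\delta<1$ forces $\vz=0$. Applying the second fact to $\vz=Y^\star-\widehat Y^\star$ gives $\widehat Y^\star=Y^\star=X^\star$, and combined with $(I-W)\widehat Y^\star=0$ we recover $(I-W)X^\star=0$. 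The converse direction simply plugs the three claimed identities back into \cref{alg_opt1a,alg_opt1b,alg_opt1c,alg_opt1d} and checks each line is reproduced, where $\sQ(0)=0$ is used again to freeze \cref{alg_opt1b}. I expect this compressor step to be the only real subtlety; in the stochastic case one should read $\sQ(Y^\star-\widehat Y^\star)=0$ as holding almost surely, so that the iteration is genuinely frozen.

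For part (b), I would observe that under \cref{assum4} the objective $F$ is strongly convex and the constraint $(I-W)X=0$ is affine, so the first-order KKT conditions are both necessary and sufficient for optimality (no constraint qualification is needed for linear constraints). Forming the Lagrangian $F(X)+\langle\Lambda,(I-W)X\rangle$ with a multiplier $\Lambda\in\bR^{n\times d}$ and using the symmetry of $W$, stationarity reads $\nabla F(X^\star)+(I-W)\Lambda=0$, i.e.\ $\nabla F(X^\star)\in\text{range}(I-W)$, while primal feasibility is precisely $(I-W)X^\star=0$; this is the claimed characterization. As a sanity check one can note, via \cref{assum2}, that $\text{range}(I-W)=\text{span}(\bone)^{\perp}$, so the range condition is equivalent to $\bone^\T\nabla F(X^\star)=0$; combined with the consensus constraint $X^\star=\bone(\vx^\star)^\T$ this reduces to $\sum_{i}\nabla f_i(\vx^\star)=\nabla f(\vx^\star)=0$, the optimality condition of the original problem \cref{original}.
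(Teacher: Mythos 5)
Your proposal is correct and takes essentially the same route as the paper: peel off the fixed-point relations of \cref{alg_opt1d,alg_opt1a,alg_opt1c,alg_opt1b} in turn for part (a), and invoke the KKT conditions for the affinely constrained convex program \cref{original2} for part (b). The only difference is that you make explicit the compressor facts the paper leaves implicit in its one-line deduction $Y^\star=\widehat Y^\star=X^\star$ — namely that \cref{assum1} with $\delta<1$ forces $\sQ(\bzero)=\bzero$ and $\sQ(\vz)=\bzero\Rightarrow\vz=\bzero$ (almost surely in the stochastic case) — which is a welcome bit of added rigor rather than a different argument.
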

\begin{proof} (a) It can be easily checked that $(X^\star,\Psi^\star,\widehat Y^\star)$ satisfying the condition is a fixed-point. Conversely, it follows from \cref{alg_opt1d} that $\Psi^\star=-\nabla F(X^\star)$, and hence $Y^\star=\widehat Y^\star=X^\star$ by \cref{alg_opt1a,alg_opt1b}. Then, we obtain $(I-W)X^\star=0$ from \cref{alg_opt1c}.  (b) It directly follows from  the Karush-Kuhn-Tucker (KKT) theorem  for \cref{original2} \cite{boyd2004convex}.
\end{proof}

Since $\Psi^1=0$, it follows from \cref{alg_opt1c} that $\Psi^k\in\text{range}(I-W)$ which implies that $\Psi^\star\in\text{range}(I-W)$. Thus, \cref{fixed_point} allows us to focus only on the convergence of \cref{alg_opt1} to its fixed points. Define $\Theta=\tau^{-1}(I-W)^\dagger-\gamma I$, which is nonnegative-definite in the sequel. We first provide the convergence result of COLD for unbiased compressors under \cref{assum1}.
\begin{theo}\label{theo1}
	Suppose \cref{assum1,assum2,assum4} hold, and $\sQ(\cdot)$ is unbiased.   Let $\gamma\in\big(0,\frac{1}{\mu+L}\big)$, $\tau\in\big(0,\frac{(1-\delta)^2}{2\gamma(24\delta+(1-\delta)^2)}\big)$ in  \cref{alg_opt1} and $\ve^k=\bE\big(\|X^{k}-X^\star\|_{\gamma^{-1}}^2+\|\Psi^{k}-\Psi^\star\|_{\Theta+\gamma I}^2+\|\Psi^k-\Psi^{k-1}\|_{\Theta}^2+\frac{2(1+\delta)\tau}{1-\delta}\|\widehat Y^{k}-Y^{k-1}\|_{I-W}^2\big)$ where $\{X^k\}$, $\{\Psi^k\}$ and $\{\widehat Y^k\}$ are generated by \cref{alg_opt1}. Then, we have
	\bee
	\ve^{k+1}\leq \sigma \ve^{k},
	\ene
	where $\sigma=\max\big\{1-\frac{2\mu L\gamma}{\mu+L},1-\frac{\gamma\tau\rho}{2}\big\}<1$. In particular, if we set $\gamma=\frac{1}{2L}$ and $\tau=\frac{(1-\delta)^2}{2\gamma(24\delta+1)}$, then
	\bee
	\sigma= \max\Big\{\frac{L}{\mu+L},1-\frac{(1-\delta)^2\rho}{4(1+24\delta)}\Big\} .
	\ene
\end{theo}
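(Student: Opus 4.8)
The plan is to read \cref{alg_opt1} as an \emph{unbiased stochastic perturbation of NIDS} and to run a primal--dual Lyapunov argument against the fixed point characterized in \cref{fixed_point}. I would first introduce the error variables $\widetilde X^k=X^k-X^\star$, $\widetilde\Psi^k=\Psi^k-\Psi^\star$ and the gradient error $g^k=\nabla F(X^k)-\nabla F(X^\star)$, recalling from \cref{fixed_point} that $\Psi^\star=-\nabla F(X^\star)$, $\widehat Y^\star=X^\star$, $(I-W)X^\star=0$, and that $\Psi^k,\Psi^\star\in\text{range}(I-W)$ so that $\Theta=\tau^{-1}(I-W)^\dagger-\gamma I$ acts as a genuine positive semidefinite weight on the dual errors (nonnegativity on $\text{range}(I-W)$ is guaranteed because the stated bound on $\tau$ is well below $1/(\gamma(1-\lambda_n(W)))$). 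The four terms of $\ve^k$ have distinct jobs: $\|X^k-X^\star\|_{\gamma^{-1}}^2$ carries the primal progress, the two $\Psi$-terms carry the dual progress in the weighted NIDS geometry, and the innovation term $\frac{2(1+\delta)\tau}{1-\delta}\|\widehat Y^k-Y^{k-1}\|_{I-W}^2$ is the \emph{buffer} that will absorb the compression error.

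The central observation is that $\widehat Y^{k+1}=Y^k+r^k$, where $r^k=\sQ(Y^k-\widehat Y^k)-(Y^k-\widehat Y^k)$ satisfies $\bE_k[r^k]=0$ by unbiasedness and $\bE_k\|r^k\|^2\le\delta\|Y^k-\widehat Y^k\|^2$ by \cref{assum1}, with $\bE_k[\cdot]$ the conditional expectation given the history through step $k$. Substituting this into $X^{k+1}=Y^k-\gamma\tau(I-W)\widehat Y^{k+1}$ and $\Psi^{k+1}-\Psi^k=\tau(I-W)\widehat Y^{k+1}$ expresses each COLD iterate as the exact NIDS iterate (the $\widehat Y=Y$ case) plus a term linear in $r^k$. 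Hence, when I expand $\|X^{k+1}-X^\star\|_{\gamma^{-1}}^2$ and $\|\Psi^{k+1}-\Psi^\star\|_{\Theta+\gamma I}^2$ and take $\bE_k$, every cross term linear in $r^k$ vanishes, and the whole effect of compression collapses to a single variance contribution controlled by $\delta\|Y^k-\widehat Y^k\|^2$.

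For the exact NIDS part, I would combine the expanded primal and dual quantities exactly as in the uncompressed analysis. The gradient contribution is handled by the co-coercivity inequality implied by \cref{assum4}, namely $\langle g^k,\widetilde X^k\rangle\ge\frac{\mu L}{\mu+L}\|\widetilde X^k\|^2+\frac{1}{\mu+L}\|g^k\|^2$, which produces the primal factor $1-\frac{2\mu L\gamma}{\mu+L}$ once $\gamma<\frac{1}{\mu+L}$. The weight $\Theta$ is chosen precisely so that the cross terms coupling the primal and dual updates telescope against the increment term $\|\Psi^k-\Psi^{k-1}\|_\Theta^2$, leaving a clean dual decrease governed by the smallest nonzero eigenvalue of $I-W$, i.e.\ the spectral gap $\rho$, and giving the dual factor $1-\frac{\gamma\tau\rho}{2}$. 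To close the recursion I then treat the variance: the buffer term appearing in $\ve^{k+1}$ is itself the compression residual, $\widehat Y^{k+1}-Y^k=r^k$, so it is already $\le\delta\|Y^k-\widehat Y^k\|^2$ in conditional expectation; and I bound $\|Y^k-\widehat Y^k\|^2$ by splitting $Y^k-\widehat Y^k=(Y^k-Y^{k-1})-(\widehat Y^k-Y^{k-1})$, where the second piece is exactly the buffer already present in $\ve^k$ and the first is expanded through \cref{alg_opt1a} into primal, dual, and gradient increments.

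The main obstacle is the final balancing. After adding the weighted one-step inequalities, the surviving compression variance $\delta\|Y^k-\widehat Y^k\|^2$ must be dominated by the guaranteed decreases in the other three terms; separating $Y^k-Y^{k-1}$ into its primal, dual, and gradient pieces via repeated Young's inequalities is what forces both the factor $\frac{2(1+\delta)}{1-\delta}$ on the buffer and the stepsize restriction $\tau<\frac{(1-\delta)^2}{2\gamma(24\delta+(1-\delta)^2)}$, with the constant $24$ emerging from the number of such splittings. Getting these constants to close simultaneously, rather than each step being routine, is the delicate part. Once the bookkeeping holds with room to spare, both the primal factor $1-\frac{2\mu L\gamma}{\mu+L}$ and the dual factor $1-\frac{\gamma\tau\rho}{2}$ survive, giving $\sigma<1$; substituting $\gamma=\frac{1}{2L}$ and $\tau=\frac{(1-\delta)^2}{2\gamma(24\delta+1)}$ then yields the stated explicit rate.
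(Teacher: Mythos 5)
Your proposal takes essentially the same route as the paper's own proof: the same Lyapunov function, the same use of unbiasedness to cancel the terms linear in the compression error $r^k$ while retaining its variance (which the paper writes as $\bE\|\widehat Y^{k+1}-Y^k\|_{2\tau(I-W)}^2$), the same splitting $Y^k-\widehat Y^k=(Y^k-Y^{k-1})-(\widehat Y^k-Y^{k-1})$ to bound the innovation by the buffer plus primal/dual/gradient increments (the paper's \cref{lemma5}), and the same co-coercivity-based primal--dual contraction (the paper's \cref{lemma8,lemma9}) closed by Young-inequality balancing. The constant bookkeeping you flag as the delicate part is indeed where the paper spends its effort, and your outline is consistent with how it is carried out there.
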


\cref{theo1} shows that COLD converges at a linear rate $\cO\big(\max\{\frac{L}{\mu},\frac{1}{(1-\delta)^2\rho}\}\log\frac{1}{\epsilon}\big)$.  In comparison with the (centralized) gradient method of the convergence rate $\cO\big(\frac{L}{\mu}\log\frac{1}{\epsilon}\big)$ \cite{nesterov2013introductory},  we can explicitly quantify how the network and compression affect the rates in terms of two simple quantities, i.e,  $\delta$ and $\rho$.  

Since the uncompressed NIDS converges at a linear rate $\cO\big(\max\{\frac{L}{\mu},\frac{1}{\rho}\}\log\frac{1}{\epsilon}\big)$ \cite{li2019decentralized}, the $\delta$-contracted compression only affects the term related to the network.  Particularly, COLD converges at the same rate as NIDS over a network with a ``modified" adjacency matrix $W'$ such that $\lambda_2(W')=1-(1-\delta)^2\lambda_2(W)$, and reduces to NIDS if $\delta=0$. Note that COLD is  communication-efficient since each node only sends the compressed vector $\vq_i^k$.

\cref{theo1} also shows how to design a compressor to minimize the overall communication costs for COLD. Consider the unbiased compressor in \cref{exap_compressor}\ref{compressor2} with $p=\infty$. The total number of transmitted bits of node $i$ to obtain an $\epsilon$-optimal solution is $((l+1)d+b)\cO\big(\max\{\frac{L}{\mu},(1-\frac{1}{2^{l-1}})^{-2}\frac{1}{\rho}\}\log\frac{1}{\epsilon}\big)$, where $l+1$ is the encoding length. An optimal $l$ can be obtained by minimizing it, which depends on $\mu,L,\rho,d,b$ and is not larger than $\max\{0,\log_2{\frac{b}{d}}\}+5$. Thus, transmitting a minimum number of bits (e.g. 1-bit) per iteration may be not optimal for COLD in terms of the total communication load.

The proof of \cref{theo1} is based on the following two lemmas. \cref{lemma5} bounds the innovation and \cref{lemma9} bounds the distance of current decision variables to the fixed points, the proofs of which are relegated to Appendix \ref{adx2}. To facilitate presentation, we let $\widetilde X^k=X^k-X^\star$, $\widetilde\Psi^k=\Psi^k-\Psi^\star$ and abbreviate $\nabla F(X^k)$ and $\nabla F(X^\star)$ to $\nabla F^k$ and $\nabla F^\star$, respectively.
\begin{lemma}\label{lemma5}
	Under \cref{assum1,assum2,assum4}, it holds that 
		\bea\label{eq1_lemma5}
	&\bE\|\widehat Y^{k+1}-Y^k\|^2\leq\frac{2\delta}{1+\delta}\|\widehat Y^{k}-Y^{k-1}\|^2\\
	&+\frac{2\delta\gamma^2}{1-\delta}\|\Psi^k-\Psi^\star+\Psi^k-\Psi^{k-1}+\nabla F^k-\nabla F^\star\|^2.
	\ena
\end{lemma}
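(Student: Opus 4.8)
The plan is to reduce everything to the one-step compression bound of \cref{assum1} and then control the quantity $\|Y^k-\widehat Y^k\|^2$ by splitting it around $Y^{k-1}$. First I would observe that, by \cref{alg_opt1b}, $\widehat Y^{k+1}-Y^k=\sQ(Y^k-\widehat Y^k)-(Y^k-\widehat Y^k)$, so taking the (conditional) expectation over the compressor at step $k$ and invoking \cref{assum1} gives $\bE\|\widehat Y^{k+1}-Y^k\|^2\le\delta\|Y^k-\widehat Y^k\|^2$. It then remains to bound the deterministic quantity $\|Y^k-\widehat Y^k\|^2$ by the two terms on the right-hand side.

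Second, I would decompose $Y^k-\widehat Y^k=(Y^k-Y^{k-1})-(\widehat Y^k-Y^{k-1})$ and apply Young's inequality $\|a-b\|^2\le(1+\eta^{-1})\|a\|^2+(1+\eta)\|b\|^2$ with the tailored choice $\eta=\frac{1-\delta}{1+\delta}$. This is exactly the value that turns the two prefactors into $1+\eta^{-1}=\frac{2}{1-\delta}$ (the coefficient of $\|Y^k-Y^{k-1}\|^2$) and $1+\eta=\frac{2}{1+\delta}$ (the coefficient of $\|\widehat Y^k-Y^{k-1}\|^2$), so that after multiplying through by $\delta$ the coefficients $\frac{2\delta}{1-\delta}$ and $\frac{2\delta}{1+\delta}$ in the statement appear. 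Picking this $\eta$ is the only genuinely delicate calibration in the argument.

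Third, I would compute $Y^k-Y^{k-1}$ in closed form. Substituting the expression for $X^k$ from \cref{alg_opt1d} (shifted by one index, $X^k=X^{k-1}-\gamma\nabla F^{k-1}-\gamma\Psi^k$) into \cref{alg_opt1a} yields $Y^k=X^{k-1}-\gamma\nabla F^{k-1}-\gamma\nabla F^k-2\gamma\Psi^k$, while $Y^{k-1}=X^{k-1}-\gamma\nabla F^{k-1}-\gamma\Psi^{k-1}$ from \cref{alg_opt1a}; subtracting gives $Y^k-Y^{k-1}=-\gamma(\nabla F^k+2\Psi^k-\Psi^{k-1})$. Using the fixed-point identity $\Psi^\star=-\nabla F^\star$ from \cref{fixed_point} to regroup $\nabla F^k+2\Psi^k-\Psi^{k-1}=(\nabla F^k-\nabla F^\star)+(\Psi^k-\Psi^\star)+(\Psi^k-\Psi^{k-1})$, I obtain $\|Y^k-Y^{k-1}\|^2=\gamma^2\|\Psi^k-\Psi^\star+\Psi^k-\Psi^{k-1}+\nabla F^k-\nabla F^\star\|^2$, which is precisely the vector appearing in the second term. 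Combining the three steps yields the claim.

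I expect the main obstacle to be purely bookkeeping: getting the index shift in $X^k$ right and regrouping the gradient and dual terms against the fixed point so that the outcome matches the exact vector $\Psi^k-\Psi^\star+\Psi^k-\Psi^{k-1}+\nabla F^k-\nabla F^\star$ in the statement, rather than some equivalent-but-differently-written combination. A minor point worth flagging is the expectation convention: the $\bE$ on the left is over the step-$k$ compression conditioned on the history, so the right-hand side is deterministic given that history, and taking the tower expectation afterwards delivers the fully-averaged inequality that feeds into \cref{theo1}.
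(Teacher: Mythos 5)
Your proposal is correct and follows essentially the same route as the paper's proof: the same compression bound $\bE\|\widehat Y^{k+1}-Y^k\|^2\leq\delta\|\widehat Y^k-Y^k\|^2$ from \cref{assum1}, the same identity $Y^k-Y^{k-1}=-\gamma(\nabla F^k+2\Psi^k-\Psi^{k-1})$ obtained via the index-shifted \cref{alg_opt1d} and regrouped with $\Psi^\star=-\nabla F^\star$, and the same Young's inequality with $c=\frac{1-\delta}{1+\delta}$. The only difference is cosmetic---you apply Young's inequality before computing $Y^k-Y^{k-1}$ in closed form, whereas the paper substitutes the recursion for $Y^k$ first---so the two arguments are identical in substance.
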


\begin{lemma}\label{lemma9}
	Under \cref{assum1,assum2,assum4}, it holds that
	\bea\label{eq1}
	&\|\widetilde X^{k+1}\|_{\gamma^{-1}}^2+\|\widetilde \Psi^{k+1}\|_{\Theta+\gamma I}^2+\|\Psi^{k+1}-\Psi^{k}\|_\Theta^2\\
	&\leq \Big(1-\frac{2\mu L\gamma}{\mu+L}\Big)\|\widetilde X^k\|_{\gamma^{-1}}^2+\|\widetilde \Psi^k\|_\Theta^2+2\langle\widetilde \Psi^{k+1},\widehat Y^{k+1}-Y^k\rangle\\
	&\quad+\Big(1-\frac{2\gamma^{-1}}{\mu+L}\Big)\|\nabla F^k-\nabla F^\star\|_{\gamma}^2.
	\ena
\end{lemma}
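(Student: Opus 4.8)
The plan is to turn the bound into an almost-exact algebraic identity, the only genuine inequality being the strong-convexity/smoothness estimate. First I would linearize the $X$-recursion: subtracting the fixed-point identity $X^\star=X^\star-\gamma\nabla F^\star-\gamma\Psi^\star$ (valid since $\Psi^\star=-\nabla F^\star$ by \cref{fixed_point}) from \cref{alg_opt1d} gives $\widetilde X^{k+1}=\widetilde X^k-\gamma(\nabla F^k-\nabla F^\star)-\gamma\widetilde\Psi^{k+1}$. Expanding $\|\widetilde X^{k+1}\|_{\gamma^{-1}}^2$ then yields $\|\widetilde X^k\|_{\gamma^{-1}}^2$, the cross term $-2\langle\widetilde X^k,\nabla F^k-\nabla F^\star\rangle$, the multiplier cross term $-2\langle\widetilde X^k-\gamma(\nabla F^k-\nabla F^\star),\widetilde\Psi^{k+1}\rangle$, and $\gamma\|\nabla F^k-\nabla F^\star+\widetilde\Psi^{k+1}\|^2$.

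Next I would invoke the one inequality in the argument: the co-coercivity bound implied by \cref{assum4}, namely $\langle\nabla F^k-\nabla F^\star,\widetilde X^k\rangle\ge\frac{\mu L}{\mu+L}\|\widetilde X^k\|^2+\frac{1}{\mu+L}\|\nabla F^k-\nabla F^\star\|^2$, which holds rowwise (since $F$ decomposes over nodes) and hence in the Frobenius inner product. Substituting it extracts exactly the factor $(1-\frac{2\mu L\gamma}{\mu+L})$ on $\|\widetilde X^k\|_{\gamma^{-1}}^2$ and, after recombining the gradient-norm contributions, the coefficient $(1-\frac{2\gamma^{-1}}{\mu+L})$ on $\|\nabla F^k-\nabla F^\star\|_\gamma^2$. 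For the multiplier terms I would use $Y^k=X^k-\gamma\nabla F^k-\gamma\Psi^k$ (\cref{alg_opt1a}) together with $Y^\star=X^\star$ to rewrite $\widetilde X^k-\gamma(\nabla F^k-\nabla F^\star)=\widetilde Y^k+\gamma\widetilde\Psi^k$, splitting them into $-2\langle\widetilde Y^k,\widetilde\Psi^{k+1}\rangle$ and $-2\gamma\langle\widetilde\Psi^k,\widetilde\Psi^{k+1}\rangle+\gamma\|\widetilde\Psi^{k+1}\|^2$; completing the square turns the latter into $\gamma\|\Psi^{k+1}-\Psi^k\|^2-\gamma\|\widetilde\Psi^k\|^2$.

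The heart of the matter is then the multiplier bookkeeping. Adding $\|\widetilde\Psi^{k+1}\|_{\Theta+\gamma I}^2+\|\Psi^{k+1}-\Psi^k\|_\Theta^2$ to both sides, the key observation is that $\Theta+\gamma I=\tau^{-1}(I-W)^\dagger$, so every $\Psi$-norm collapses under the single metric $M=\tau^{-1}(I-W)^\dagger$. The three-point identity $\|\widetilde\Psi^{k+1}\|_M^2-\|\widetilde\Psi^k\|_M^2+\|\Psi^{k+1}-\Psi^k\|_M^2=2\langle\widetilde\Psi^{k+1},\Psi^{k+1}-\Psi^k\rangle_M$ reduces the residual to one inner product. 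Using $\Psi^{k+1}-\Psi^k=\tau(I-W)\widehat Y^{k+1}$ from \cref{alg_opt1c}, this becomes $\langle\widetilde\Psi^{k+1},(I-W)^\dagger(I-W)\widehat Y^{k+1}\rangle=\langle\widetilde\Psi^{k+1},\widehat Y^{k+1}\rangle$, where I drop the projection $(I-W)^\dagger(I-W)$ because $\widetilde\Psi^{k+1}\in\text{range}(I-W)$ (as $\Psi^k,\Psi^\star\in\text{range}(I-W)$). Writing $\widehat Y^{k+1}=(\widehat Y^{k+1}-Y^k)+Y^k$ and recalling $Y^k-\widetilde Y^k=Y^\star=X^\star$, the collected leftover is $2\langle\widetilde\Psi^{k+1},\widehat Y^{k+1}-Y^k\rangle+2\langle\widetilde\Psi^{k+1},X^\star\rangle$, and the last term vanishes since $\widetilde\Psi^{k+1}\in\text{range}(I-W)$ is orthogonal to $X^\star\in\text{null}(I-W)$. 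This produces exactly \cref{eq1}.

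The hard part will be the pseudoinverse/projection accounting of the previous paragraph: one must verify the exact cancellation $\Theta+\gamma I=\tau^{-1}(I-W)^\dagger$ and repeatedly exploit that every multiplier iterate lives in $\text{range}(I-W)$, so that $(I-W)^\dagger(I-W)$ can be discarded and the range/null-space orthogonality kills the spurious $X^\star$-term. It is precisely this geometry that makes the otherwise unwieldy $\Psi$-terms telescope into the single clean inner product with the compression error $\widehat Y^{k+1}-Y^k$; everything else is routine expansion plus the one co-coercivity step, and notably the claimed bound holds for any admissible $\gamma,\tau$ without further stepsize restrictions.
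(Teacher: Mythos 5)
Your proposal is correct and takes essentially the same route as the paper: the paper likewise reduces \cref{eq1} to an exact algebraic identity (its \cref{lemma6}, established with precisely the range/pseudo-inverse geometry you describe — $\widetilde\Psi^{k+1}\in\text{range}(I-W)$, dropping $(I-W)^\dagger(I-W)$, and orthogonality to $X^\star$) combined with the single co-coercivity inequality of \cref{lemma8}. The differences are only bookkeeping — you expand the linearized recursion $\widetilde X^{k+1}=\widetilde X^k-\gamma(\nabla F^k-\nabla F^\star)-\gamma\widetilde\Psi^{k+1}$ and collapse all multiplier norms under the single metric $\Theta+\gamma I=\tau^{-1}(I-W)^\dagger$ via a three-point identity, whereas the paper keeps $\Theta$ and $\gamma I$ separate and works through the inner product $\langle\widetilde\Psi^{k+1},\widetilde X^{k+1}\rangle$ — plus one harmless slip: in your first-paragraph expansion, listing both $-2\langle\widetilde X^k-\gamma(\nabla F^k-\nabla F^\star),\widetilde\Psi^{k+1}\rangle$ and $\gamma\|\nabla F^k-\nabla F^\star+\widetilde\Psi^{k+1}\|^2$ double-counts the cross term $2\gamma\langle\nabla F^k-\nabla F^\star,\widetilde\Psi^{k+1}\rangle$, though the splitting you actually use later, $\gamma\|\nabla F^k-\nabla F^\star\|^2+\gamma\|\widetilde\Psi^{k+1}\|^2$, is the correct one and nothing downstream is affected.
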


\begin{proof}[Proof of \cref{theo1}]
	Notice that
	\bea
	&2\langle\Psi^{k+1}-\Psi^\star,\widehat Y^{k+1}-Y^k\rangle\\
	&\overset{\cref{alg_opt1c}}{=}2\langle\Psi^{k}-\Psi^\star+\tau(I-W)Y^k,\widehat Y^{k+1}-Y^k\rangle\\
	&\quad+\|\widehat Y^{k+1}-Y^k\|_{2\tau(I-W)}^2\\
	&=2\langle\Psi^{k}-\Psi^\star+\tau(I-W)Y^k,\sQ(Y^k-\widehat Y^k)-(Y^k-\widehat Y^k)\rangle\\
	&\quad+\|\widehat Y^{k+1}-Y^k\|_{2\tau(I-W)}^2.
	\ena
	Since $\sQ$ is unbiased, we have
	\bee
	\bE[2\langle\Psi^{k+1}-\Psi^\star,\widehat Y^{k+1}-Y^k\rangle]=\bE\|\widehat Y^{k+1}-Y^k\|_{2\tau(I-W)}^2.
	\ene

	Let $\Lambda=\frac{4}{1-\delta}\tau(I-W)$ and add $\bE\|\widehat Y^{k+1}-Y^k\|_{\Lambda}^2$ to both sides of $\cref{eq1}$. We obtain that
	\begin{align}
	&\|\widetilde X^{k+1}\|_{\gamma^{-1}}^2+\|\widetilde \Psi^{k+1}\|_{\Theta+\gamma I}^2+\|\Psi^{k+1}-\Psi^{k}\|_\Theta^2\\
	&\quad+\bE\|\widehat Y^{k+1}-Y^k\|_{\Lambda-2\tau(I-W)}^2\\
	&\overset{\cref{eq1_lemma5},\cref{eq1}}{\leq}\Big(1-\frac{2\mu L\gamma}{\mu+L}\Big)\|\widetilde X^k\|_{\gamma^{-1}}^2+\frac{2\delta}{1+\delta}\|\widehat Y^{k}-Y^{k-1}\|_{\Lambda}^2\\
	&\;+\Big(1-\frac{2\gamma^{-1}}{\mu+L}\Big)\|\nabla F^k-\nabla F^\star\|_{\gamma}^2+\|\widetilde \Psi^k\|_\Theta^2\\
	&\;+\frac{3\delta\gamma^2}{1-\delta}\Big((1+\eta)\|\widetilde \Psi^k\|_{\Lambda}^2+(1+\eta^{-1})\|\Psi^k-\Psi^{k-1}\|_{\Lambda}^2\Big)\\
	&\;+\frac{6\delta\gamma^2}{1-\delta}\|\nabla F^k-\nabla F^\star\|_{\Lambda}^2\\
	&\leq\Big(1-\frac{2\mu L\gamma}{\mu+L}\Big)\|\widetilde X^k\|_{\gamma^{-1}}^2+\frac{2\delta}{1+\delta}\|\widehat Y^{k}-Y^{k-1}\|_{\Lambda}^2\\
	&\;+\|\nabla F^k-\nabla F^\star\|_{\gamma I-\frac{2}{\mu+L}I+\frac{6\delta\gamma^2\Lambda}{1-\delta}}^2+\|\widetilde \Psi^k\|_\Theta^2\\
	&\;+\frac{\gamma}{2}\|\widetilde \Psi^k\|^2+\frac{3\delta\gamma^2}{1-\delta}(1+\eta^{-1})\|\Psi^k-\Psi^{k-1}\|_{\Lambda}^2\\
	&\leq\Big(1-\frac{2\mu L\gamma}{\mu+L}\Big)\|\widetilde X^k\|_{\gamma^{-1}}^2+\|\widehat Y^{k}-Y^{k-1}\|_{\frac{8\delta\tau(I-W)}{1-\delta^2}}^2\\
	&\quad+\|\widetilde \Psi^k\|_{\Theta+\frac{\gamma}{2} I}^2+\frac{\lambda_1(\Theta)+\gamma/2}{\lambda_1(\Theta)+\gamma}\|\Psi^k-\Psi^{k-1}\|_{\Theta}^2
	\end{align}
	where the second inequality is obtained by setting $\eta=\frac{(1-\delta)^2}{24\delta}\gamma^{-1}\tau^{-1}(1-\lambda_n(W))^{-1}-1> \frac{(1-\delta)^2}{24\delta}$. The last inequality follows from $\gamma I-\frac{2}{\mu+L}I+\frac{6\delta\gamma^2\Lambda}{1-\delta}=\frac{24\delta}{(1-\delta)^2}\gamma^2\tau(I-W)+\gamma-\frac{2}{\mu+L}I\preceq 2\gamma-\frac{2}{\mu+L}I\preceq 0$ and the following relation
	\bee
	\frac{3\delta\gamma^2(1+\eta^{-1})}{1-\delta}\Lambda\preccurlyeq \frac{\lambda_1(\Theta)+\gamma/2}{\lambda_1(\Theta)+\gamma}\Theta,
	\ene
	which follows from the following inequalities
	\bea
	&\frac{3\delta\gamma(1+\eta)\Lambda}{1-\delta}\preccurlyeq\frac{12\delta\gamma(1+\eta)\tau(1-\lambda_n(W))}{(1-\delta)^2}I= \frac{1}{2}I
	\ena
	and
	\bea
	\frac{1}{2}I&\preccurlyeq\frac{\xi-1/2}{\xi}\eta(\xi-1)I\preccurlyeq\frac{\lambda_1(\Theta)+\gamma/2}{\lambda_1(\Theta)+\gamma}\eta(\xi-1)I\\
	&\preccurlyeq\frac{\lambda_1(\Theta)+\gamma/2}{\lambda_1(\Theta)+\gamma}\eta\gamma^{-1} \Theta
	\ena
	where $\xi=\gamma^{-1}\tau^{-1}(1-\lambda_n(W))^{-1}\geq\frac{24\delta}{(1-\delta)^2}+1$.

	Hence, it holds that
	\bea
	&\|\widetilde X^{k+1}\|_{\gamma^{-1}}^2+\|\widetilde \Psi^{k+1}\|_{\Theta+\gamma I}^2+\|\Psi^{k+1}-\Psi^{k}\|_\Theta^2\\
	&\quad+\bE\|\widehat Y^{k+1}-Y^k\|_{\frac{2(1+\delta)}{1-\delta}\tau(I-W)}^2\\
	&\leq\Big(1-\frac{2\mu L\gamma}{\mu+L}\Big)\|\widetilde X^k\|_{\gamma^{-1}}^2+\frac{\lambda_1(\Theta)+\gamma/2}{\lambda_1(\Theta)+\gamma}\|\widetilde \Psi^k\|_{\Theta+\gamma I}^2\\
	&\quad+\frac{4\delta}{(1+\delta)^2}\|\widehat Y^{k}-Y^{k-1}\|_{\frac{2(1+\delta)}{1-\delta}\tau(I-W)}^2\\
	&\quad+\frac{\lambda_1(\Theta)+\gamma/2}{\lambda_1(\Theta)+\gamma}\|\Psi^k-\Psi^{k-1}\|_{\Theta}^2\\
	&\leq\sigma\bigg(\|\widetilde X^k\|_{\gamma^{-1}}^2+\|\widetilde \Psi^k\|_{\Theta+\gamma I}^2\\
	&\qquad+\|\Psi^k-\Psi^{k-1}\|_{\Theta}^2+\|\widehat Y^{k}-Y^{k-1}\|_{\frac{2(1+\delta)}{1-\delta}\tau(I-W)}^2\bigg)
	\ena
	where
	$
	\sigma=\max\Big\{1-\frac{2\mu L\gamma}{\mu+L},1-\frac{\gamma\tau(1-\lambda_2(W))}{2}\Big\}
	$
	and we used the relation
	\bea
	\frac{\lambda_1(\Theta)+\gamma/2}{\lambda_1(\Theta)+\gamma}&=\frac{\gamma^{-1}\tau^{-1}(1-\lambda_2(W))^{-1}-1/2}{\gamma^{-1}\tau^{-1}(1-\lambda_2(W))^{-1}}\\
	&=1- \frac{\gamma\tau(1-\lambda_2(W))}{2}\geq \frac{4\delta}{(1+\delta)^2}.
	\ena
	The proof is completed by taking expectations on both sides.
\end{proof}

Now, we turn to general biased compressors. Different from the unbiased case, the convergence requires a small $\delta$ to compensate the biasedness caused by the compressor.

\begin{theo}\label{theo4}
	Suppose that \cref{assum1,assum2,assum4} hold, and $\delta'\triangleq\frac{4\sqrt{\delta}}{1-\delta}<1$. Let $\{X^k\}$, $\{\Psi^k\}$ and $\{\widehat Y^k\}$ be generated by \cref{alg_opt1}, $\gamma\in(0,\frac{1-\delta}{\mu+L})$, $\tau\in(0,\frac{1-\delta}{2\gamma(1-\lambda_n(W))})$, and $\ve^k=\bE\big(\|X^{k}-X^\star\|_{\gamma^{-1}}^2+\|\Psi^{k}-\Psi^\star\|_{\Theta+(1-{\sqrt{\delta'}}/{2})\gamma I}^2+\|\Psi^k-\Psi^{k-1}\|_{\Theta}^2+{\frac{1+\delta-4\sqrt{\delta}}{2\gamma\sqrt{\delta\delta'}}}\|\widehat Y^{k}-Y^{k-1}\|\big)$. Then, it holds that
	\bee
	\ve^{k+1}\leq \sigma \ve^{k},
	\ene
	where $\sigma=\max\big\{1-\frac{2\mu L\gamma}{\mu+L},1-\gamma\tau\rho(1-\sqrt{\delta'})\big\}<1$.
\end{theo}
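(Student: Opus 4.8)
The plan is to reuse \cref{lemma5} and \cref{lemma9} verbatim---both are established under \cref{assum1,assum2,assum4} with no unbiasedness---and to repeat the one-step argument of \cref{theo1}, changing only the single place where unbiasedness was used. In that proof the cross term $2\langle\widetilde\Psi^{k+1},\widehat Y^{k+1}-Y^k\rangle$ on the right-hand side of \cref{lemma9} was collapsed into the deterministic quadratic $\bE\|\widehat Y^{k+1}-Y^k\|_{2\tau(I-W)}^2$ via the identity $\bE[\widehat Y^{k+1}-Y^k]=\bzero$. For a biased $\sQ$ this identity fails and the cross term can take either sign, so I would instead bound its magnitude.

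The key step is a Young's inequality with the tuned parameter $\frac{\sqrt{\delta'}}{2}\gamma$,
\bee
2\langle\widetilde\Psi^{k+1},\widehat Y^{k+1}-Y^k\rangle\leq \frac{\sqrt{\delta'}}{2}\gamma\|\widetilde\Psi^{k+1}\|^2+\frac{2}{\gamma\sqrt{\delta'}}\|\widehat Y^{k+1}-Y^k\|^2,
\ene
followed by moving the first summand to the left-hand side of \cref{lemma9}, where it combines with $\|\widetilde\Psi^{k+1}\|_{\Theta+\gamma I}^2$ to yield $\|\widetilde\Psi^{k+1}\|_{\Theta+(1-\sqrt{\delta'}/2)\gamma I}^2$---precisely the weight appearing in $\ve^k$. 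The second summand is the step-$(k+1)$ innovation; after adding the innovation term of $\ve^{k+1}$ to both sides, I would apply \cref{lemma5} to replace $\bE\|\widehat Y^{k+1}-Y^k\|^2$ by a multiple of $\|\widehat Y^k-Y^{k-1}\|^2$ plus a multiple of $\|\widetilde\Psi^k+(\Psi^k-\Psi^{k-1})+(\nabla F^k-\nabla F^\star)\|^2$, bringing every quantity back to level $k$.

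To close the recursion I would split this triple sum by a weighted inequality $\|\sum_i v_i\|^2\leq\sum_i\theta_i^{-1}\|v_i\|^2$ with $\sum_i\theta_i=1$, and then verify that each term of $\ve^{k+1}$ is controlled by $\sigma$ times the corresponding term of $\ve^k$. The $\|\widetilde X^k\|_{\gamma^{-1}}^2$ term already contracts at $1-\frac{2\mu L\gamma}{\mu+L}$ directly from \cref{lemma9}; the $\|\widetilde\Psi^k\|^2$ part contracts at $1-\gamma\tau\rho(1-\sqrt{\delta'})$ once its redistributed weight is compared against the largest eigenvalue $\lambda_1(\Theta)=\tau^{-1}\rho^{-1}-\gamma$, mirroring how the factor $\frac{\lambda_1(\Theta)+\gamma/2}{\lambda_1(\Theta)+\gamma}$ arose in \cref{theo1}; the innovation $\|\widehat Y^k-Y^{k-1}\|^2$ contracts because its coefficient in $\ve^k$ is sized to dominate the $\frac{2\delta}{1+\delta}$ factor that \cref{lemma5} feeds forward; the negative gradient term $(\gamma-\frac{2}{\mu+L})\|\nabla F^k-\nabla F^\star\|^2$ from \cref{lemma9} must dominate the positive gradient feedback emitted by \cref{lemma5}, which is exactly why the stepsize is tightened to $\gamma<\frac{1-\delta}{\mu+L}$; and $\tau<\frac{1-\delta}{2\gamma(1-\lambda_n(W))}$ keeps $\Theta\succ 0$ with enough margin (its smallest eigenvalue on $\text{range}(I-W)$ exceeds $\gamma\frac{1+\delta}{1-\delta}$) to absorb the $\|\Psi^k-\Psi^{k-1}\|^2$ piece.

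I expect the main obstacle to be selecting the weights $\theta_i$ together with the innovation coefficient $\frac{1+\delta-4\sqrt{\delta}}{2\gamma\sqrt{\delta\delta'}}$ so that all of these contractions hold at the common rate $\sigma$. Unlike the unbiased case every bound is now lossy by a factor scaling as $\sqrt{\delta}$, and feeding the residual innovation back through \cref{lemma5} reinjects a $\sqrt{\delta\delta'}$-weighted copy of the state; tracing this feedback shows that the scheme closes exactly when $\delta'=\frac{4\sqrt{\delta}}{1-\delta}<1$. This is the same threshold that keeps the innovation coefficient positive (since $\delta'<1$ is equivalent to $1+\delta-4\sqrt{\delta}>0$) and renders the guaranteed rate $1-\gamma\tau\rho(1-\sqrt{\delta'})$ strictly less than one.
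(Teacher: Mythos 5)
Your proposal takes essentially the same route as the paper's proof: it replaces the unbiasedness identity with a Cauchy--Schwarz/Young bound at exactly the paper's tuned parameter $\zeta=\frac{\sqrt{\delta'}\gamma}{2}$, reuses \cref{lemma5,lemma9} unchanged, and closes the recursion with the same Lyapunov weights (your innovation coefficient $\frac{1+\delta-4\sqrt{\delta}}{2\gamma\sqrt{\delta\delta'}}$ is precisely the paper's $\Lambda-\zeta^{-1}$), with the stepsize conditions playing the roles you describe. One minor slip in a side remark: $\delta'<1$ is equivalent to $1-\delta-4\sqrt{\delta}>0$, which implies---but is not equivalent to---$1+\delta-4\sqrt{\delta}>0$; this does not affect the argument.
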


Compared to the contraction factor $\sigma=\max\big\{1-\frac{2\mu L\gamma}{\mu+L},1-\frac{\gamma\tau\rho}{2}\big\}$ for the unbiased compressors in \cref{theo1}, \cref{theo4} shows that the linear convergence rate for biased compressors has the same dependence on $\gamma,\tau,\mu,L$ and $\rho$ if $\delta$ is small, but reduces faster as $\delta$ increases. Fortunately, a moderate number of encoding bits often leads to a very small $\delta$ since it typically decreases exponentially in the encoding length as shown in \cref{exap_compressor}\ref{compressor2}. Moreover, the round-off error caused by finite-precision is a consequence of biased compression with a \emph{small} $\delta$. For instance, $\delta<1.2\times 10^{-7}$ when using 32-bit floating-point format for $|\vx|>2^{-126}$ \cite{higham2002accuracy,ieee754}.

\begin{remark}
	Linear convergence has been achieved in \citet{liu2020linear,kovalev2020linearly,li2021compressed}  for only unbiased $\delta$-contracted compressors. Though the time-varying uniform quantizers  in \cite{lee2018finite,magnusson2020maintaining,xiong2021quantized} are biased, it is unclear whether their algorithms can adapt to other compressors and how the network and compression affect the convergence rates. To the best of our knowledge, COLD is the first distributed algorithm with linear convergence rate for biased $\delta$-contracted compressors.
\end{remark}

\begin{proof}[Proof of \cref{theo4}]
	It follows from the Cauchy-Schwarz inequality \cite{boyd2004convex} that
	\bea
	&\bE[2\langle\Psi^{k+1}-\Psi^\star,\widehat Y^{k+1}-Y^k\rangle]\\
	&\leq\zeta\bE\|\widetilde \Psi^{k+1}\|^2+\zeta^{-1}\bE\|\widehat Y^{k+1}-Y^k\|^2,\forall \zeta>0.
	\ena
	Let $\eta=\frac{1-\delta}{1+\delta}$, $\zeta=\frac{\gamma}{1-\delta}\sqrt{2\delta(1+\delta)\delta'^{-1}(1+\eta)}=\frac{2\gamma}{1-\delta}\sqrt{\delta \delta'^{-1}}=\frac{\sqrt{\delta'}\gamma}{2}$ and $\Lambda=\frac{1+\delta}{\delta'(1-\delta)}\zeta^{-1}$. Adding $\bE\|\widehat Y^{k+1}-Y^k\|_{\Lambda}^2$ to both sides of $\cref{eq1}$, we obtain that
	\begin{align}\label{eq4_theo4}
		 & \|\widetilde X^{k+1}\|_{\gamma^{-1}}^2+\|\widetilde \Psi^{k+1}\|_{\Theta+\gamma I-\zeta I}^2+\|\Psi^{k+1}-\Psi^{k}\|_\Theta^2                                               \\
		 & \quad+\bE\|\widehat Y^{k+1}-Y^k\|_{\Lambda-\zeta^{-1}}^2                                                                                                                        \\
		 & \overset{\cref{eq1_lemma5},\cref{eq1}}{\leq}\Big(1-\frac{2\mu L\gamma}{\mu+L}\Big)\|\widetilde X^k\|_{\gamma^{-1}}^2+\|\widetilde \Psi^k\|_\Theta^2                         \\
		 & \quad+\Big(1-\frac{2\gamma^{-1}}{\mu+L}\Big)\|\nabla F^k-\nabla F^\star\|_{\gamma}^2                                                                                        \\
		 & \quad+\frac{2\delta}{1+\delta}\|\widehat Y^{k}-Y^{k-1}\|_{\Lambda}^2+\frac{2\delta\gamma^2(1+\eta)}{1-\delta}\|\widetilde \Psi^k\|_{\Lambda}^2                                  \\
		 & \quad+\frac{4\delta\gamma^2(1+\eta^{-1})}{1-\delta}\big(\|\Psi^k-\Psi^{k-1}\|_{\Lambda}^2+\|\nabla F^k-\nabla F^\star\|_{\Lambda}^2\big)                                    \\
		 & \leq\Big(1-\frac{2\mu L\gamma}{\mu+L}\Big)\|\widetilde X^k\|_{\gamma^{-1}}^2+\|\widetilde \Psi^k\|_\Theta^2                                                                 \\
		 & \quad+\Big(1-\frac{2\gamma^{-1}}{\mu+L}\Big)\|\nabla F^k-\nabla F^\star\|_{\gamma}^2                                                                                        \\
		 & \quad+\frac{2\delta\|\widehat Y^{k}-Y^{k-1}\|_{\Lambda-\zeta^{-1}}^2}{1-\delta'+(1+\delta')\delta}+\frac{4\delta\gamma^2\zeta^{-1}}{\delta'(1-\delta)^2}\|\widetilde \Psi^k\|^2 \\
		 & \quad+\frac{8\delta\gamma^2}{(1-\delta)^2}\big(\|\Psi^k-\Psi^{k-1}\|_{\Lambda}^2+\|\nabla F^k-\nabla F^\star\|_{\Lambda}^2\big)                                             \\
		 & \overset{(a)}{\leq}\Big(1-\frac{2\mu L\gamma}{\mu+L}\Big)\|\widetilde X^k\|_{\gamma^{-1}}^2+\|\widetilde \Psi^k\|_{\Theta+\zeta I}^2                                        \\
		 & \quad+\Big(1-\frac{2\gamma^{-1}}{\mu+L}\Big)\|\nabla F^k-\nabla F^\star\|_{\gamma}^2                                                                                        \\
		 & \quad+\frac{2\delta}{1-\delta'+(1+\delta')\delta}\|\widehat Y^{k}-Y^{k-1}\|_{\Lambda-\zeta^{-1}}^2                                                                              \\
		 & \quad+\frac{\sqrt{\delta'}\gamma(1+\delta)}{1-\delta}\big(\|\Psi^k-\Psi^{k-1}\|^2+\|\nabla F^k-\nabla F^\star\|^2\big)                                                      \\
		 & \leq\Big(1-\frac{2\mu L\gamma}{\mu+L}\Big)\|\widetilde X^k\|_{\gamma^{-1}}^2+\frac{\sqrt{\delta'}\gamma(1+\delta)}{1-\delta}\|\Psi^k-\Psi^{k-1}\|^2                         \\
		 & \quad+\big(1-\gamma\tau(1-\lambda_2(W))(1-\sqrt{\delta'})\big)\|\widetilde \Psi^k\|_{\Theta+\gamma I-\zeta I}^2                                                             \\
		 & \quad+\frac{2\delta}{1-\delta'+(1+\delta')\delta}\|\widehat Y^{k}-Y^{k-1}\|_{\Lambda-\zeta^{-1}}^2                                                                              
	\end{align}
	where we used $\frac{4\delta\gamma^2\zeta^{-1}}{\delta'(1-\delta)^2}=\zeta$ and $\frac{8\delta\gamma^2\Lambda}{(1-\delta)^2}=\frac{16\delta(1+\delta)\gamma}{\delta'\sqrt{\delta'}(1-\delta)^3}=\frac{\sqrt{\delta'}\gamma(1+\delta)}{1-\delta}$ to obtain $(a)$. The last inequality follows from $\gamma-\frac{2}{\mu+L}+\frac{\sqrt{\delta'}\gamma(1+\delta)}{1-\delta}\leq 0$ and
	\bea
	\Theta+\zeta I&\preceq \frac{\tau^{-1}(1-\lambda_2(W))^{-1}-\gamma+\zeta}{\tau^{-1}(1-\lambda_2(W))^{-1}-\zeta} (\Theta+\gamma I-\zeta I)\\
	&\preceq(1-\gamma\tau(1-\lambda_2(W))(1-\sqrt{\delta'}))(\Theta+\gamma I-\zeta I).
	\ena

	Since $\tau\leq\frac{1-\delta}{2\gamma(1-\lambda_n(W))}$, we have
	\bea
	\frac{\sqrt{\delta'}\gamma(1+\delta)}{1-\delta}I&\leq \frac{\sqrt{\delta'}(1+\delta)}{(1-\delta)(\gamma^{-1}\tau^{-1}(1-\lambda_n(W))^{-1}-1)} \Theta\\
	&\leq\sqrt{\delta'} \Theta.
	\ena
	Moreover, it is easy to show that $1-\gamma\tau(1-\lambda_2(W))(1-\sqrt{\delta'})\geq\sqrt{\delta'}\geq\frac{2\delta}{1-\delta'+(1+\delta')\delta}$. Then, it follows from \cref{eq4_theo4} that
	\bea
	&\|\widetilde X^{k+1}\|_{\gamma^{-1}}^2+\|\widetilde \Psi^{k+1}\|_{\Theta+\gamma I-\zeta I}^2\\
	&\quad+\|\Psi^{k+1}-\Psi^{k}\|_\Theta^2+\bE\|\widehat Y^{k+1}-Y^k\|_{\Lambda-\zeta^{-1}}^2\\
	&\leq\sigma\Big(\|\widetilde X^k\|_{\gamma^{-1}}^2+\|\widetilde \Psi^k\|_{\Theta+\gamma I-\zeta I}^2\\
	&\qquad+\|\Psi^{k}-\Psi^{k}\|_\Theta^2+\bE\|\widehat Y^{k}-Y^{k-1}\|_{\Lambda-\zeta^{-1}}^2\Big)
	\ena
	where
	$
	\sigma=\max\big(1-\frac{2\mu L\gamma}{\mu+L},1-\gamma\tau(1-\lambda_2(W))(1-\sqrt{\delta'})\big).
$

\end{proof}

\section{COLD with Dynamic Scaling}\label{sec4b}

Although COLD converges linearly for $\delta$-contracted compressors, it may diverge for the compressors under \cref{assum3}. To resolve it, we design a dynamic scaling mechanism in this section. A key observation is that the compression error for $\vx\in\{\vx:\|\vx\|_\p>1\}$ can be arbitrarily large under \cref{assum3}. Hence, we need to restrict the vector to be compressed to the unit ball via an appropriate scaling. Specifically, each node transmits $\sQ(\vx/s^k)$ instead of $\sQ(\vx)$ to neighbors, where $s^k>0$ is a time-varying scaling factor, i.e., the compressor is applied to a scaled version of $\vx$. The receiver then recovers $\vx$ by scaling $\sQ(\vx/s^k)$ back with $s^k$.
\subsection{Compressed consensus with dynamic scaling}
We first apply the idea on dynamic scaling mechanism to the distributed consensus problem. Define $\Delta^k = X^k-\widehat X^k$. The following algorithm is proposed by incorporating the dynamic scaling into \cref{alg_con1}, i.e.,
\bea\label{alg_con2}
\widehat X^{k+1}&=\widehat X^k+s^k \sQ(\Delta^k/s^k)\\
X^{k+1}&=X^k+\gamma(W-I)\widehat X^{k+1}
\ena
We show its linear convergence under \cref{assum3} and $s^k=c_s\beta^k$ for some $c_s>0,\beta\in(0,1)$ below. The proof is deferred to \cref{adx3}.
\begin{algorithm}[!t]
	\caption{The Dyna-COLD --- from the view of node $i$}\label{dyna-cold}
	\begin{algorithmic}[1]
		\REQUIRE The initial point $\vx_i^0$ and scaling factor sequence $\{s^k\}$. Set $\vx_i^{1}=\vx_i^0-\gamma \nabla f_i(\vx_i^0)$ and $\psi_i^1=\widehat \vy_i^{1}=\widetilde\vy_i^1=\bzero$.
		\FOR {$k=1,2,\cdots$}
		\STATE Compute $\vy_i^{k}=\vx_i^k-\gamma\nabla f_i(\vx_i^k)-\gamma\psi_i^k$, $\vq_i^k=\sQ((\vy_i^k-\widehat \vy_i^k)/s^k)$ and $\widehat\vy_i^{k+1}=\widehat\vy_i^{k}+s^k\cdot \vq_i^k$.
		\STATE Send $\vq_i^k$ to all neighbors and receive $\vq_j^k$ from each neighbor $j\in\cN_i$.
		\STATE Update
		\bea
		\widetilde\vy_i^{k+1}&=\widetilde\vy_i^{k}+\tau s^k\left(\vq_i^k-\sum\nolimits_{j\in\cN_i}[W]_{ij}\vq_j^k\right)\\
		\psi_i^{k+1}&=\psi_i^k+\widetilde\vy_i^{k+1}\\
		\vx_i^{k+1}&=\vx_i^k-\gamma\nabla f_i(\vx_i^k)-\gamma\psi_i^{k+1}.
		\ena
		\ENDFOR
	\end{algorithmic}
\end{algorithm}

\begin{theo}\label{theo5}
	Let \cref{assum2,assum3} hold, and $\{X^k\}$ and $\{\widehat X^k\}$ be generated by \cref{alg_con2}. Let $0< \gamma\leq \min\big\{\frac{(1-\delta)\rho}{2\delta(1+2c_{p})},1\big\}$ and $s^k=c_s \beta^k$ for any $\beta\in[\underline{\beta},1)$, where $\beta^k$ denotes the $k$-th power of $\beta$, $c_p=\sqrt{n}d^{|\frac{1}{2}-\frac{1}{p}|}$, $c_s\geq\|X^0-\bone\bar\vx^\T\|/\varsigma$, $\varsigma$ is given in \cref{eq4_theo5},  and $\underline\beta= \max\left\{1-\frac{\gamma\rho}{2}+\frac{2\delta\gamma^2c_p}{1-(1+2\gamma)\delta},\ \frac{1+\delta}{2}+\delta\gamma\big(1+\frac{2c_p}{\rho}\big)\right\}<1$. Then, it holds that
	\bea\label{eq1_theo5}
	\|X^k-\bone\bar\vx^\T\|&\leq \|X^0-\bone\bar\vx^\T\|\beta^k,\\
	\|\widehat X^k-X^k\|_{\text{max}}&\leq c_s\beta^k.
	\ena
\end{theo}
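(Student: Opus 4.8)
The plan is to prove both bounds in \cref{eq1_theo5} simultaneously by induction on $k$, since they are coupled: the compression error feeds the consensus recursion, while the consensus error feeds the innovation recursion. First I would record two structural facts. Multiplying the second line of \cref{alg_con2} on the left by $\bone^\T$ and using $\bone^\T(W-I)=0$ shows that the network average is invariant, so $\bar\vx$ is the common target for every $k$. Writing $\Delta^k=X^k-\widehat X^k$ and the rescaled compression error $E^k\triangleq s^k\sQ(\Delta^k/s^k)-\Delta^k$, the update of $\widehat X^{k+1}$ gives $\widehat X^{k+1}=X^k+E^k$, and substituting this into the two recursions yields the clean pair
\[
X^{k+1}-\bone\bar\vx^\T=(I+\gamma(W-I))(X^k-\bone\bar\vx^\T)+\gamma(W-I)E^k,
\]
\[
\Delta^{k+1}=-(I-\gamma(W-I))E^k+\gamma(W-I)(X^k-\bone\bar\vx^\T),
\]
where I used $(W-I)\bone\bar\vx^\T=0$.

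The observation that links \cref{assum3} to the scaling is that the second bound in \cref{eq1_theo5}, namely $\|\Delta^k\|_{\text{max}}\le c_s\beta^k=s^k$, is exactly the condition $\|\Delta^k/s^k\|_{\text{max}}\le 1$ that places each row of $\Delta^k/s^k$ in the unit $p$-ball. Hence, whenever the innovation bound holds at step $k$, \cref{assum3} applies and gives $\|E^k\|_{\text{max}}=s^k\|\sQ(\Delta^k/s^k)-\Delta^k/s^k\|_{\text{max}}\le\delta s^k=\delta c_s\beta^k$. This is precisely why the induction must carry the innovation bound: it is what licenses the compressor error estimate at each step.

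For the inductive step I would bound the first recursion in the Frobenius norm. On $\text{span}(\bone)^\perp$, where $X^k-\bone\bar\vx^\T$ lives, $I+\gamma(W-I)$ contracts by $1-\gamma\rho$ for small $\gamma$, and $\|\gamma(W-I)E^k\|\le 2\gamma\|E^k\|\le 2\gamma c_p\|E^k\|_{\text{max}}$ using the norm-equivalence constant $\|E^k\|\le c_p\|E^k\|_{\text{max}}$ with $c_p=\sqrt n\, d^{|1/2-1/p|}$. Combined with the hypotheses and $\|E^k\|_{\text{max}}\le\delta c_s\beta^k$, this yields the first claim provided $\beta$ dominates $1-\gamma\rho+2\gamma c_p\delta c_s/\|X^0-\bone\bar\vx^\T\|$. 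For the second recursion I would instead work in the max-norm, using that $I-\gamma(W-I)$ and $W-I$ have induced $\infty$-norms at most $1+2\gamma$ and $2$, and converting the Frobenius bound on $X^k-\bone\bar\vx^\T$ into a max-norm bound; this gives the innovation claim provided $\beta$ dominates $(1+2\gamma)\delta+2\gamma c_p\|X^0-\bone\bar\vx^\T\|/c_s$.

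The two requirements on $\beta$ pull $c_s$ in opposite directions—the consensus step wants $c_s$ small, the innovation step wants $c_s$ large—so the main obstacle is closing both inductions at once. I would resolve this by taking $c_s$ at the threshold $\varsigma$ of \cref{eq4_theo5}, which the innovation step forces to be of order $1/(\gamma c_p)$, i.e.\ the smallest admissible $c_s$ is of order $\gamma$ times the initial error. Substituting this $c_s$ back into the consensus requirement converts the $\cO(\gamma)$ compression-error contribution into the $\cO(\gamma^2)$ term $\tfrac{2\delta\gamma^2 c_p}{1-(1+2\gamma)\delta}$ of $\underline\beta$, so the consensus contraction degrades only from $1-\gamma\rho$ to $1-\gamma\rho/2$ once $\gamma\le\tfrac{(1-\delta)\rho}{2\delta(1+2c_p)}$. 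The base case $k=0$ is immediate for the first bound and needs only $c_s$ large enough that $\|\Delta^0\|_{\text{max}}\le c_s$, which is absorbed into $\varsigma$. Finally I would verify $\underline\beta<1$ and that the $\gamma$ bound also secures $1-\gamma(1-\lambda_n(W))>0$, so that the spectral radius of $I+\gamma(W-I)$ on $\text{span}(\bone)^\perp$ is genuinely $1-\gamma\rho$ rather than $|1-\gamma(1-\lambda_n(W))|$, which completes the argument.
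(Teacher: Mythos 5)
Your proposal follows essentially the same route as the paper's proof in Appendix C: a simultaneous induction carrying the consensus error in the Frobenius norm and the innovation in the max-norm, with the same two recursions, the same key observation that the inductive bound $\|X^k-\widehat X^k\|_{\text{max}}\leq s^k$ is precisely what licenses \cref{assum3} after scaling (giving $\|E^k\|_{\text{max}}\leq\delta s^k$), the same norm-equivalence constants $c_p$, and the same resolution of the opposing demands on $c_s$ through the balanced choice of $\varsigma$ in \cref{eq4_theo5}. The one point you defer to a final verification---that $I+\gamma(W-I)$ contracts by $1-\gamma\rho$ on $\text{span}(\bone)^\perp$, which requires $\gamma(1-\lambda_n(W)+\rho)\leq 2$ and is not literally implied by $\gamma\leq 1$---is a condition the paper's own proof also uses without justification, so it is not a gap relative to the paper's argument.
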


\cref{theo5} shows the linear convergence of \cref{alg_con2} for compressors satisfying \cref{assum3}, which includes the one-bit binary quantizer in \cref{exap_compressor}\ref{compressor1}. In contrast, \cref{alg_con1} fails to converge with this compressor as illustrated in \cref{sec5c}.

\begin{remark}
	While \cref{alg_con2} is motivated by \cref{alg_con1}, a special case of \cref{alg_con2} is studied in \cite{li2011distributed} with a different form for an element-wise uniform quantizer. In this view, \cref{theo5} generalizes their result to a broader class of compressors. Moreover, \cite{li2011distributed} only provides the asymptotic convergence rate where in Theorem 4 we establish a non-asymptotic linear convergence rate. 
\end{remark}
\vspace{-0.15cm}
\subsection{The Dyna-COLD}
Now, we  integrate the idea of dynamic scaling with COLD to design the Dyna-COLD as follows 
\begin{equation}\label{alg_opt2}
	\begin{aligned}
		Y^k          & =X^k-\gamma \nabla F(X^k)-\gamma \Psi^k,   \\
		\varDelta^k  & = Y^k -\widehat Y^k,                           \\
		\widehat Y^{k+1} & =\widehat Y^k +s^k\sQ(\varDelta^k/s^k),        \\
		\Psi^{k+1}   & =\Psi^k+\tau(I-W)\widehat Y^{k+1},             \\
		X^{k+1}      & =X^k-\gamma\nabla F(X^k)-\gamma\Psi^{k+1}.
	\end{aligned}
\end{equation}
  Different from \cref{alg_opt1}, each node of Dyna-COLD sends  $\sQ(\varDelta^k/s^k)$ rather than $\sQ(\varDelta^k)$ to its neighbors, and each neighbor scales it back by $s^k$ to obtain an approximation of $\varDelta^k$.
The implementation is very similar to \cref{cold} except that the updates in $\vq_i^k$ and $\widetilde\vy_i^{k+1}$. See Algorithm \ref{dyna-cold} for details. The linear convergence rate of Dyna-COLD is given below.
\begin{theo}\label{theo3}
	Under Assumptions \ref{assum3}-\ref{assum2}, and $\delta'\triangleq\frac{64\delta(1+\delta) c_{p}}{(1-\delta)^3\rho}<1$, where $c_p=\sqrt{n}d^{|\frac{1}{2}-\frac{1}{p}|}$. Let $\{X^k\}$, $\{\Psi^k\}$ and $\{\widehat Y^k\}$ be generated by \cref{alg_opt2}, $\tau\leq\frac{2\mu L}{\rho(\mu+L)},\
		\gamma\leq\min\big\{\frac{2}{\mu+L},\frac{1}{2\tau(1-\lambda_n(W))}, \frac{\tau(1-\lambda_n(W))}{L^2}\big\}$
	and $s^k=c\beta^k$, where
	\begin{itemize}[leftmargin=*,topsep=0pt,noitemsep,nolistsep]
		\item $\beta=\frac{1}{2}+\max\{\frac{2\nu}{4-\delta'\widetilde\tau\rho},\ \frac{\delta}{1+\delta}+\frac{16\delta\widetilde\tau c_{p}}{(1-\delta)^2(1-\nu)}\}<1$,
		      and $\widetilde\tau=\gamma\tau(1-\lambda_n(W))<\frac{1}{2}$,
		      $\nu=\max\big\{1-\frac{2\mu L\gamma}{\mu+L},1-\frac{1}{2}\gamma\tau\rho\big\}<1$;
		\item $c=\varsigma \widetilde c$, $\widetilde c\geq\max\big\{(\|X^{0}-X^\star\|_{\gamma^{-1}}^2+\|\Psi^{0}-\Psi^\star\|_{\Theta+\frac{\gamma}{2} I}^2)/\varsigma,\|\widehat Y^{0}-Y^{0}\|_{\text{max}}^2\big\}$, $\varsigma=\frac{1}{2}(\varsigma_1+\varsigma_2)$, and
		      $
			      \varsigma_1=\frac{8\bar{c}\gamma \widetilde\tau\nu(1+\delta)}{(1-\delta)^3(1-\widetilde\tau)-16\delta(1+\delta)\widetilde\tau \underline{c}\underline{c}},\ \varsigma_2=\frac{(1-\nu)\gamma}{2 \underline{c}\delta}.
		      $
	\end{itemize}

	Let $\ve_1^k=\|X^{k}-X^\star\|_{\gamma^{-1}}^2+\|\Psi^{k}-\Psi^\star\|_{\Theta+\frac{\gamma}{2} I}^2$ and $\ve_2^k=\|\widehat Y^{k}-Y^{k}\|_{\text{max}}^2$. Then, it holds that
	\bee
	\ve_1^{k}\leq \widetilde c\beta^k,\ \ve_2^{k}\leq  c\beta^k,\ \forall k\geq 0.
	\ene
\end{theo}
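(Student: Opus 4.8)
The plan is to run a coupled induction on two geometrically decaying quantities: the optimality error $\ve_1^k=\|X^{k}-X^\star\|_{\gamma^{-1}}^2+\|\Psi^{k}-\Psi^\star\|_{\Theta+\frac{\gamma}{2}I}^2$, which tracks $(X^k,\Psi^k)$ toward the fixed point characterized in \cref{fixed_point}, and the innovation magnitude $\ve_2^k=\|\widehat Y^k-Y^k\|_{\max}^2=\|\varDelta^k\|_{\max}^2$, which controls the input fed to the compressor. The essential new difficulty compared with \cref{theo1,theo4} is that \cref{assum3} bounds the compression error only on the unit ball, so the entire argument hinges on maintaining the invariant $\|\varDelta^k\|_{\max}\le s^k$, i.e. the normalized innovation $\varDelta^k/s^k$ never escapes the unit ball; this is precisely what the shrinking scale $s^k=c\beta^k$ is engineered to enforce.

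First I would note that the $X$- and $\Psi$-updates of \cref{alg_opt2} coincide with those of COLD in \cref{alg_opt1}, and that the descent inequality \cref{eq1} of \cref{lemma9} is obtained by treating $\widehat Y^{k+1}$ symbolically (using only \cref{assum4} and the update structure); hence it applies verbatim to Dyna-COLD, isolating all compression dependence in the cross term $2\langle\widetilde\Psi^{k+1},\widehat Y^{k+1}-Y^k\rangle$. Under the invariant, \cref{assum3} gives $\|\widehat Y^{k+1}-Y^k\|_{\max}=s^k\|\sQ(\varDelta^k/s^k)-\varDelta^k/s^k\|_{\max}\le\delta s^k$, and the norm equivalence $\|A\|\le c_p\|A\|_{\max}$ with $c_p=\sqrt{n}d^{|1/2-1/p|}$ converts this to $\|\widehat Y^{k+1}-Y^k\|\le c_p\delta s^k$; this is exactly where $c_p$ enters. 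Bounding the cross term by Cauchy--Schwarz as in \cref{theo4}, $2\langle\widetilde\Psi^{k+1},\widehat Y^{k+1}-Y^k\rangle\le\zeta\|\widetilde\Psi^{k+1}\|^2+\zeta^{-1}c_p^2\delta^2(s^k)^2$, and substituting into \cref{eq1}, the key structural gain over \cref{theo4} is that the compression contribution is now an \emph{exogenous} decaying forcing term $(s^k)^2$ rather than a relative error coupled back through the gradient and $\Psi$ terms, so \cref{lemma5} is not needed. Absorbing $\zeta\|\widetilde\Psi^{k+1}\|^2$ on the left and invoking the stated bounds on $\gamma,\tau$ (which make $\Theta\succeq0$ and render the gradient/strong-convexity terms nonpositive) yields a one-step recursion $\ve_1^{k+1}\le\nu\,\ve_1^k+\mathrm{const}\cdot(s^k)^2$ with the COLD contraction factor $\nu$.

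In parallel I would propagate the innovation by decomposing $\varDelta^{k+1}=(Y^{k+1}-Y^k)-(\widehat Y^{k+1}-Y^k)$: the second piece is at most $\delta s^k$ in max norm, while the first is controlled, via norm equivalence together with the $X$- and $\Psi$-updates, by $\sqrt{\ve_1^k}$-type quantities, giving a second recursion for $\ve_2^{k+1}$ driven by both $\ve_1^k$ and $s^k$. The final step is to pick $\beta$ and the interlocking constants $\widetilde c,c,\varsigma_1,\varsigma_2$ so that, with the base case at $k=0$ secured by the stated lower bounds on $\widetilde c$ and $c$, these two recursions simultaneously reproduce $\ve_1^k\le\widetilde c\beta^k$, $\ve_2^k\le c\beta^k$, and above all the invariant $\|\varDelta^k\|_{\max}\le s^k$. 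The expression for $\beta$ as a maximum of two terms encodes the two competing demands: $\beta$ must exceed $\nu$ so that $\ve_1$ decays, and must exceed $\frac{\delta}{1+\delta}+\frac{16\delta\widetilde\tau c_p}{(1-\delta)^2(1-\nu)}$ so the scale stays ahead of the innovation, and the hypothesis $\delta'=\frac{64\delta(1+\delta)c_p}{(1-\delta)^3\rho}<1$ is exactly what forces this maximum below $1$.

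The main obstacle is closing this coupled induction while preserving the unit-ball invariant. The two errors are genuinely entangled: $s^k$ must shrink fast enough that the forcing term $(s^k)^2$ does not spoil the geometric decay of $\ve_1$, yet slowly enough that $s^k$ always dominates $\|\varDelta^k\|_{\max}$, which unavoidably inherits a contribution from $\sqrt{\ve_1^k}$ through $Y^{k+1}-Y^k$. Balancing these opposing requirements is what pins down the precise $\beta$ and the constants $\varsigma_1,\varsigma_2$, and verifying that the resulting $\beta<1$ (equivalently $\delta'<1$) is the crux of the proof.
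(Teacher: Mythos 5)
Your plan matches the paper's own proof essentially step for step: the paper's \cref{lemma10} is exactly your observation that \cref{lemma9} carries over to \cref{alg_opt2} with all compression dependence isolated in the cross term $2\langle\widetilde\Psi^{k+1},\widehat Y^{k+1}-Y^k\rangle$; its \cref{lemma4} is your Cauchy--Schwarz-plus-norm-equivalence recursion $\ve_1^{k+1}\leq\nu\,\ve_1^k+\zeta^{-1}\underline{c}\,\|\widehat Y^{k+1}-Y^k\|_{\text{max}}^2$ with the compression entering only as an exogenous forcing term (and indeed \cref{lemma5} is never used); its \cref{lemma11} is your innovation-propagation recursion; and the closing induction balances the two recursions via the functions $g_1,g_2$ evaluated at the ratio $\varsigma=c/\widetilde c=(\varsigma_1+\varsigma_2)/2$, which is precisely the trade-off you describe, with $\delta'<1$ forcing $\beta<1$. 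The only deviation is trivial bookkeeping: the paper tracks the squared innovation ($\ve_2^k\leq c\beta^k$) rather than your linear invariant $\|\varDelta^k\|_{\text{max}}\leq s^k$, a difference of normalization rather than of method.
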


The proof is given in Appendix \ref{adx4}. \cref{theo3} shows that Dyna-COLD achieves a linear convergence rate by setting $s^k=c\beta^k$ for a sufficiently large $c>0$ and for some $\beta\in(0,1)$. We use the mathematical induction  to show that the innovation  $\|\varDelta^k\|_\text{max}\leq s^k$. If the compression error $\delta$ is small, then $\beta$ can be set close to ${(\nu+1)}/{2}$ such that Dyna-COLD has a similar convergence rate with the NIDS.

We note that $s^k$ in Dyna-COLD can be \emph{arbitrarily} chosen without affecting the linear convergence if $\sQ$ further satisfies \cref{assum1}. To elucidate it, define $\widetilde\sQ(\vx)=s^k\sQ(\vx/s^k)$, and Dyna-COLD is identical to COLD with compressor $\widetilde\sQ(\vx)$. Then, $\|\widetilde\sQ(\vx)-\vx\|^2=(s^k)^2\|\sQ(\vx/s^k)-\vx/s^k\|^2 \leq (s^k)^2\delta\|\vx/s^k\|^2= \delta\|\vx\|^2$, which also satisfies \cref{assum1}. Hence, the linear convergence follows immediately from \cref{theo1}. This suggests that Dyna-COLD is preferable to COLD, which is confirmed in numerical experiments. Moreover, the experiments also illustrate the robustness of Dyna-COLD to the choice of $s^k$, which can ease the parameter tuning process in implementation.

It is worth noting that linear convergence has been observed in our experiments even if $\delta$ is larger than the threshold in \cref{theo3}, where we test the 1-bit binary compressor in \cref{exap_compressor}\ref{compressor1} with $\delta=0.5$. We do not find other methods that achieve linear convergence with this compressor.

\begin{remark}
	Ref. \cite{yi2014quantized} proposes an exactly convergent algorithm for a uniform quantizer without convergence rate analysis by directly combining \cref{alg_con2} with a gradient step in the update of $X^{k+1}$, which is improved in \cite{doan2020fast} with an explicit sublinear convergence rate. Then, linear convergence is achieved in \cite{lee2018finite,magnusson2020maintaining,xiong2021quantized} by designing specified time-varying uniform compressors. In contrast, Dyna-COLD converges linearly for a class of compressors.
\end{remark}

\section{Numerical Experiments}\label{sec5}

In this section, we use numerical experiments to (a) validate our theoretical results, (b) examine the effects of different compressors on the convergence rates, and (c) compare COLD and Dyna-COLD with existing methods. 

\subsection{Setup}

{\bf Network.} We consider $n=20$ computing nodes connected as an Erd\H{o}s-R\'{e}nyi graph \cite{west1996introduction}, where any two nodes are linked with probability $2\ln(n)/n$. Note that $\ln(n)/n$ is the lowest probability to ensure a connected graph. Then, we use the Metropolis rule \cite{xiao2004fast} to construct $W$ to satisfy \cref{assum2}. All results are repeated 10 times on different graphs, and we report their average performance.

{\bf Tasks.} We consider two problems. The first one is a logistic regression problem on the MNIST dataset \cite{lecun1998mnist} with the cost function $f(\vx_1,\cdots,\vx_{10})=-\frac{1}{m}\sum_{i=1}^{m}\sum_{j=1}^{10}l_j^i\log\big(\frac{u_j}{\sum_{j'=1}^{10}u_{j'}}\big)+\frac{r}{2}\|\vx_j\|^2$, where $m=60000,l_j^i\in\{0,1\}$, $\vh^i\in\bR^{784}$, $u_j=\exp( \vx_j^\T \vh^i)$, and $r=0.1$. We sort the samples by their labels and then evenly divide them into $n$ parts to create heterogeneous local datasets, and each node has exclusive access to one of them. This setting is more difficult than the random partition \cite{koloskova2019decentralized}. The second problem is an average consensus problem where nodes are supposed to find the mean $\bar\vx=\sum_{i=1}^n{\vx}_i$, and ${\vx}_i\in\bR^{10000}$ are randomly generated from a normal distribution.

{\bf Compressors.} We test four compressors as shown in \cref{compressors-table}, including both unbiased and biased compressors satisfying Assumptions \ref{assum1} or \ref{assum3}. The required numbers of bits to transmit their outputs are given. In comparison, nodes in NIDS send $32d$ bits per iteration (32-bit floating point format).

\renewcommand{\tabularxcolumn}[1]{m{#1}}
\begin{table}[t]
	\caption{Tested compressors. `Bits' means the number of bits needed to encode the output of the compressor for input $\vx\in\bR^d$.}
	\label{compressors-table}
	\begin{center}
		\begin{small}
			\begin{tabularx}{\linewidth}{cXc}
				\toprule
				Name & Description                                                                                                & Bits    \\
				\midrule
				C1   & Unbiased stochastic quantizer  in \cref{exap_compressor}\ref{compressor2} with $l=2$ and $p=\infty$        & $3d+32$ \\
				\addlinespace[3pt]
				C2   & Biased quantizer in \cref{exap_compressor}\ref{compressor3}  with $l=2$, $p=\infty$ and $\xi=0.5\bone_d$   & $3d+32$ \\
				\addlinespace[3pt]
				C3   & Logarithmic quantizer in \cref{exap_compressor}\ref{compressor1} with $\cQ=\{\pm 2^{i}|i=-3,-2,\cdots,3\}$ & $4d$    \\
				\addlinespace[3pt]
				C4   & 1-bit binary quantizer in \cref{exap_compressor}\ref{compressor1}                                          & $d$     \\
				\bottomrule
			\end{tabularx}
		\end{small}
	\end{center}
	\vskip -0.1in
\end{table}

{\bf Algorithms.} We compare COLD and Dyna-COLD with NIDS \cite{li2019decentralized}, LEAD \cite{liu2020linear}, DQOA \cite{kovalev2020linearly}, and CHOCO-SGD \cite{koloskova2019decentralized}. NIDS involves no compression and serves as a baseline. In all experiments, we fix $s^k=3\|X^1\|_\text{max}\cdot 0.99^k$ in Dyna-COLD. We tune all hyperparameters of these algorithms from the grid $[0.01, 0.05, 0.1, 0.3, 0.5, 0.7, 1, 1.5]$ (1 hyperparameter for CHOCO-SGD; 2 for COLD, Dyna-COLD, and NIDS; 3 for LEAD and DQOA).

\subsection{Logistic regression}

The convergence rates for different algorithm-compressor combinations are shown in \cref{fig2,fig3}. We highlight the following observations:
\begin{itemize}[leftmargin=*,topsep=0pt,noitemsep,nolistsep]
	\item For C1 and C2 satisfying \cref{assum1}, COLD, Dyna-COLD, and LEAD have almost indistinguishable performance from the uncompressed algorithm NIDS w.r.t. iterations (Figs. \subref*{fig2a} and \subref*{fig2b}). This result extends the convergence analysis of LEAD which is only established for unbiased compressors. Moreover, DQOA only converges with the unbiased C1, and CHOCO-SGD cannot converge to the exact solution for both compressors.
	\item For C3 and C4 that satisfy only \cref{assum3}, Dyna-COLD outperforms other algorithms and is the only convergent algorithm (Figs. \subref*{fig2c} and \subref*{fig2d}). It has an almost identical convergence rate with NIDS in both cases even though the key parameter $s^k$ is not fine-tuned. This result shows the robustness of Dyna-COLD to $s^k$.
	\item Dyna-COLD with the 1-bit compressor achieves the lowest communication cost among all algorithms (\cref{fig3}). Moreover, all methods outperform NIDS w.r.t. transmitted bits, which shows the effectiveness of the communication compression.
\end{itemize}

\begin{figure}[!t]
	\centering
	\subfloat[C1]{\label{fig2a}\includegraphics[width=0.499\linewidth]{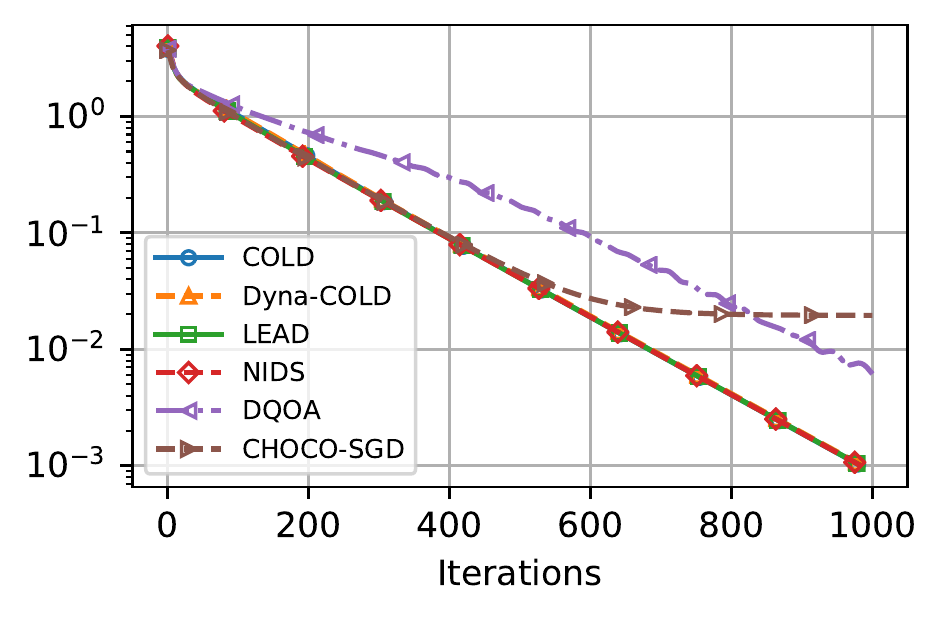}}
	\subfloat[C2]{\label{fig2b}\includegraphics[width=0.499\linewidth]{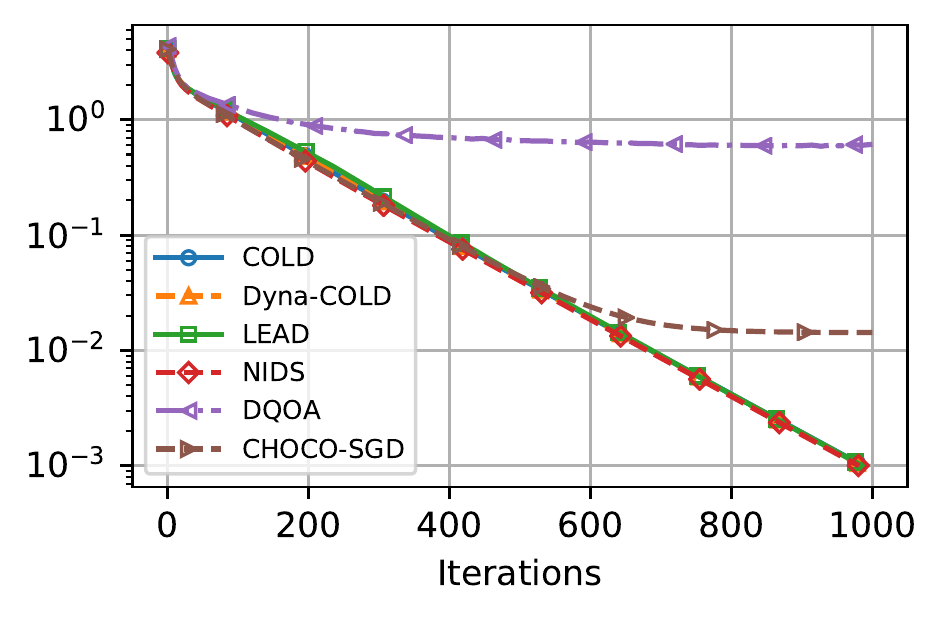}}\\
	\subfloat[C3]{\label{fig2c}\includegraphics[width=0.499\linewidth]{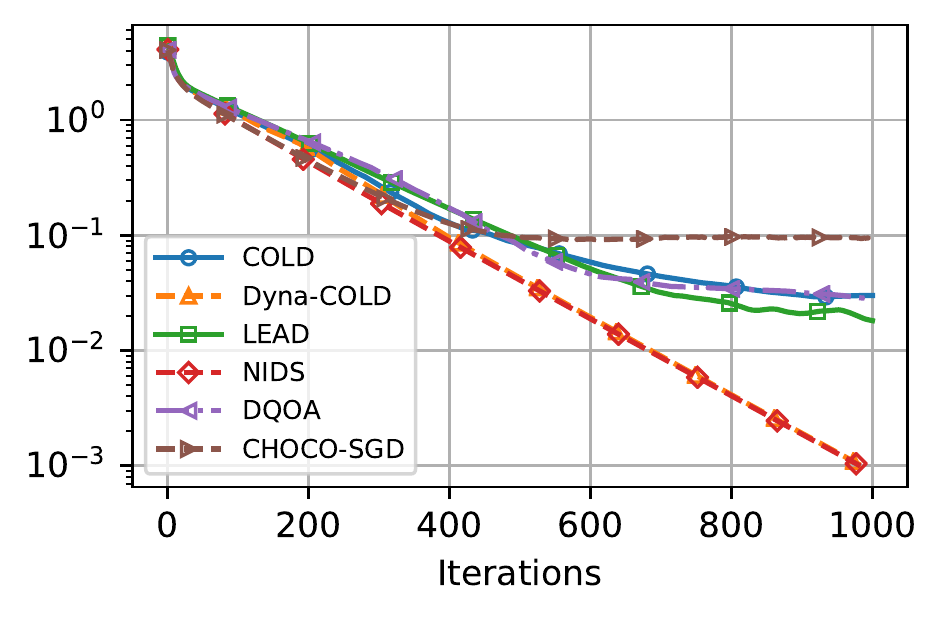}}
	\subfloat[C4]{\label{fig2d}\includegraphics[width=0.499\linewidth]{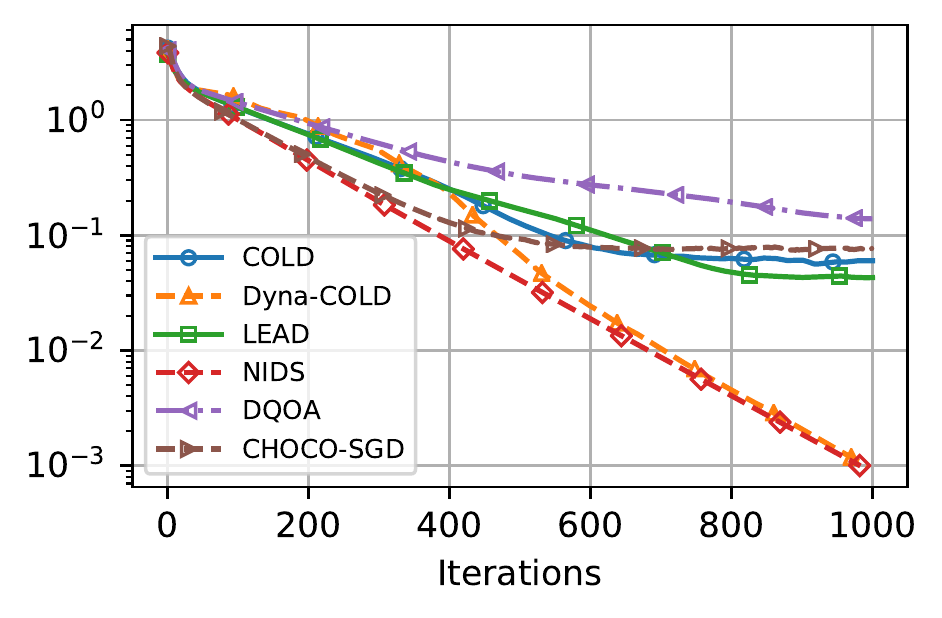}}
	\caption{Optimality gap ($\|\nabla f(\bar \vx^k)\|$) vs number of iterations for different algorithms with compressors C1-C4.}
	\label{fig2}
\end{figure}

\begin{figure}[!t]
	\centering
	\includegraphics[width=0.98\linewidth]{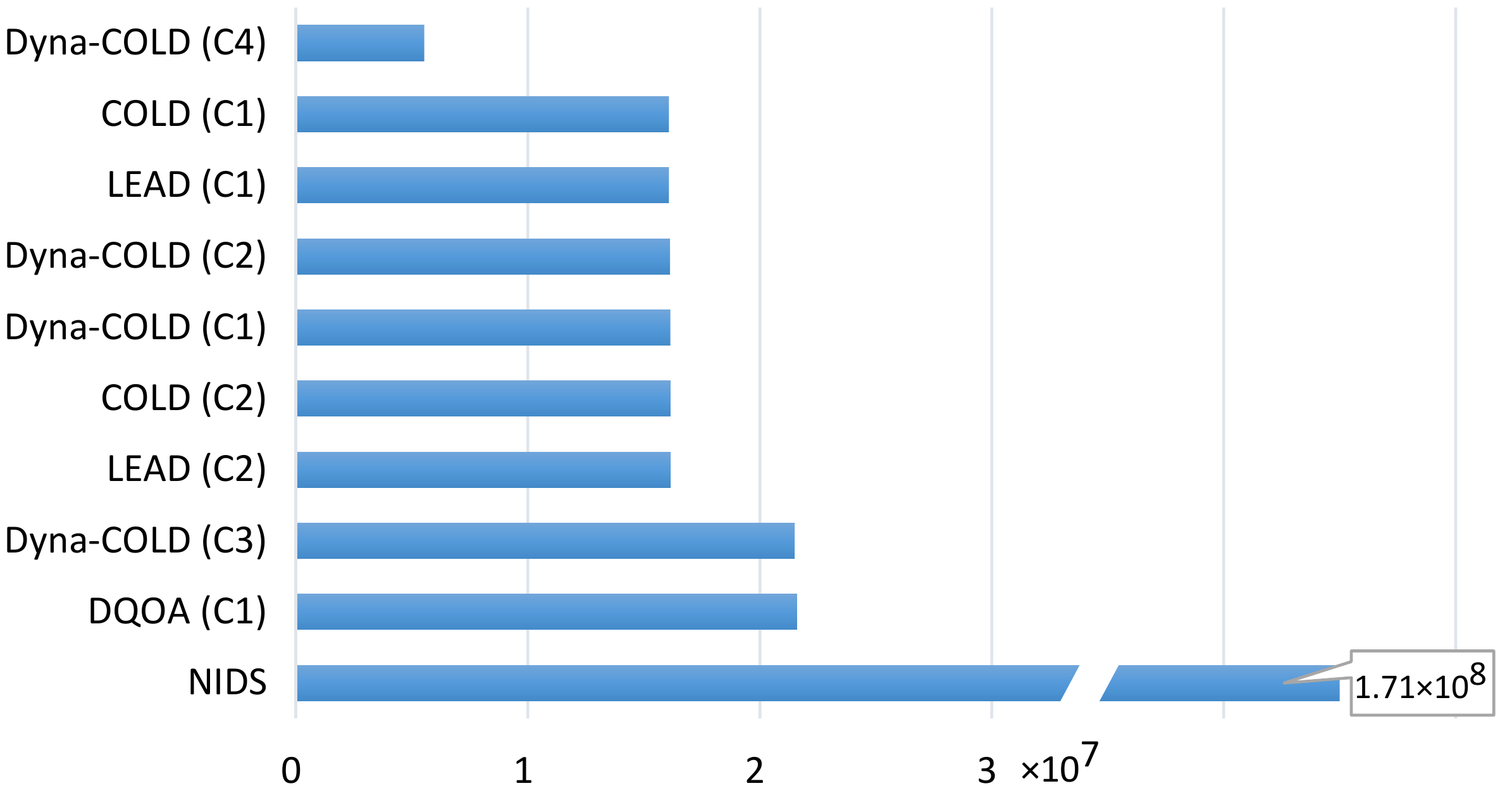} 
	\caption{Transmitted bits for different algorithms to reach $\epsilon$-optimal solutions with $\epsilon=10^{-4}$. The algorithm-compressor combinations unable to find an $\epsilon$-optimal solution are not plotted.}
	\label{fig3}
\end{figure}

\subsection{Distributed consensus}\label{sec5c}

We compare the uncompressed method in \cref{alg_con3}, CHOCO-GOSSIP in \cref{alg_con1}, and our compressed consensus method with scaling (CCS) in \cref{alg_con2} on the consensus task. The result is depicted in \cref{fig1}. We have the following observations:
\begin{itemize}[leftmargin=*,topsep=0pt,noitemsep,nolistsep]
	\item CHOCO-GOSSIP and CCS can have similar convergence rates as \cref{alg_con3} w.r.t. iterations, which validates \cref{theo2}.
	\item CHOCO-GOSSIP cannot converge using C3 and C4, while CCS converges linearly.
	\item Biased compressors (e.g. C2) can converge faster than the unbiased one (C1) in terms of both iterations and transmitted bits. Moreover, transmitting the minimum number of bits per iteration (C4) does not lead to the minimum overall cost, which is consistent with our theoretical finding.
\end{itemize}

\begin{figure}[!t]
	\centering
	\includegraphics[width=\linewidth]{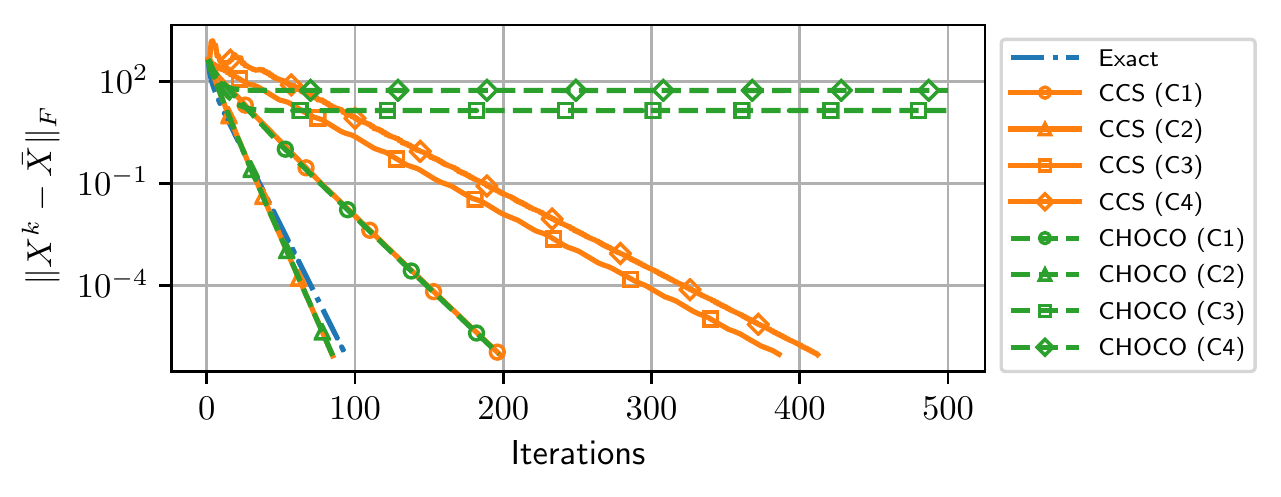}\\
	\includegraphics[width=\linewidth]{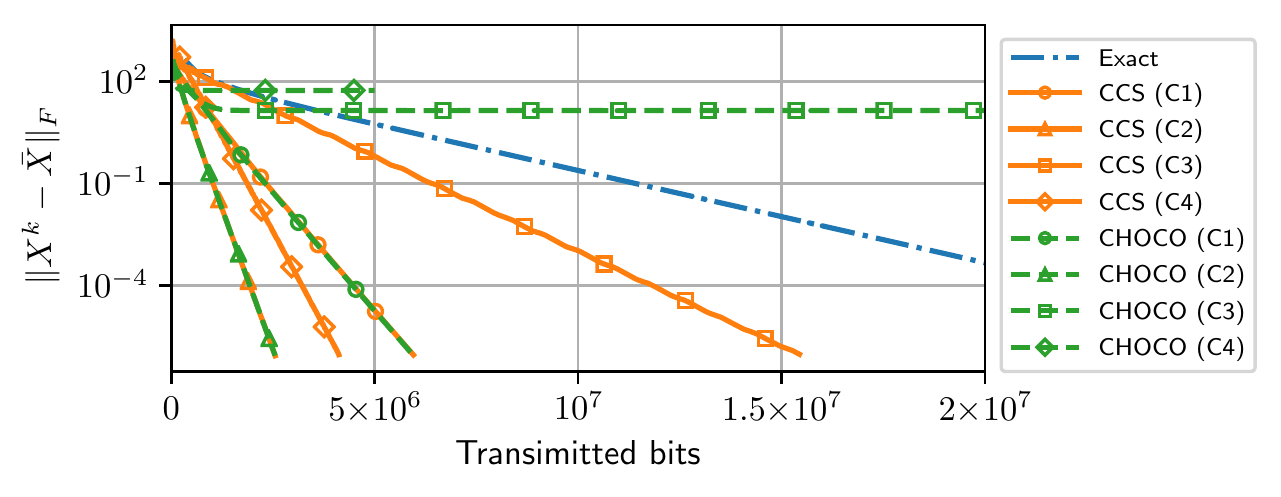}
	\caption{Consensus error vs iterations or transmitted bits. `Exact', `CHOCO', and `CCS' represent the algorithms in \cref{alg_con3}, \cref{alg_con1}, and \cref{alg_con2}, respectively.}
	\label{fig1}
\end{figure}

\section{Conclusion}\label{sec6}

We proposed two novel communication-efficient distributed algorithms based on innovation compression and dynamic scaling, which achieve linear convergence for two broad classes of compressors. Future works may focus on the extensions to the non-convex and stochastic settings.

\bibliographystyle{IEEEtran}
\bibliography{mybibf}

\appendices
\crefalias{section}{appendix}

\section{Proof of \cref{theo2}}\label{adx1}

The proof depends on several lemmas. \cref{lemma3} shows the contraction property of the adjacency matrix $W$.
\begin{lemma}\label{lemma3}
	For any $W$ satisfying \cref{assum2} and $\vx\in\text{range}(I-W)$, we have $\lambda_n(W)\|\vx\|^2\leq\|\vx\|_{W}^2\leq \lambda_2(W)\|\vx\|^2$.
\end{lemma}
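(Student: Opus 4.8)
The plan is to reduce the claim to a standard Rayleigh-quotient estimate for the symmetric matrix $W$, taking care to exploit that membership in $\text{range}(I-W)$ removes the dominant eigenvector. First I would record the spectral consequences of \cref{assum2}: since $W=W^\T$, it admits an orthonormal eigenbasis $\vv_1,\dots,\vv_n$ with real eigenvalues $\lambda_1(W)\geq\cdots\geq\lambda_n(W)$; the consensus property $\text{null}(I-W)=\text{span}(\bone_n)$ together with $W\preceq I$ forces $\lambda_1(W)=1$ to be a \emph{simple} eigenvalue with eigenvector $\vv_1=\bone_n/\sqrt{n}$, while the spectral property $-I\prec W\preceq I$ guarantees $\lambda_n(W)>-1$; in particular $\lambda_2(W)<1$.

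The key observation is that $W=W^\T$ gives $\text{range}(I-W)=\text{null}(I-W)^\perp=\text{span}(\bone_n)^\perp$, so every $\vx\in\text{range}(I-W)$ is orthogonal to $\bone_n=\sqrt{n}\,\vv_1$. Expanding $\vx=\sum_{j=2}^n c_j\vv_j$ (with no $\vv_1$ term), I would then compute $\|\vx\|^2=\sum_{j=2}^n c_j^2$ and $\|\vx\|_W^2=\langle\vx,\vx\rangle_W=\vx^\T W\vx=\sum_{j=2}^n\lambda_j(W)c_j^2$. Since for every index $j\in\{2,\dots,n\}$ we have $\lambda_n(W)\leq\lambda_j(W)\leq\lambda_2(W)$, sandwiching each term and summing yields $\lambda_n(W)\|\vx\|^2\leq\|\vx\|_W^2\leq\lambda_2(W)\|\vx\|^2$, which is exactly the claim.

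If $\vx$ is to be read as a matrix $X\in\bR^{n\times d}$ (as it is applied in the proof of \cref{theo2} to quantities like $X^k-\bone\bar\vx^\T$), I would run the same argument column by column: $X\in\text{range}(I-W)$ means $\bone_n^\T X=0$, so each column lies in $\text{span}(\bone_n)^\perp$; writing $\|X\|_W^2=\tr(X^\T W X)$ as the sum over columns of the per-column quadratic forms and $\|X\|^2$ as the sum of squared column norms, and summing the per-column bounds, gives the matrix version.

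The argument is essentially routine, so I do not anticipate a genuine technical obstacle; the only point requiring care---and the conceptual heart of the lemma---is recognizing that restricting to $\text{range}(I-W)$ deletes the eigenvalue-$1$ component. This is precisely why the upper bound is the \emph{second} largest eigenvalue $\lambda_2(W)$ rather than $\lambda_1(W)=1$, and it is what makes the contraction strict and ultimately drives the spectral-gap factor $\rho=1-\lambda_2(W)$ appearing in \cref{theo2}.
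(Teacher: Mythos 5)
Your proof is correct and takes essentially the same approach as the paper: both exploit that $\vx\in\text{range}(I-W)$ is orthogonal to $\bone_n$ and then apply a Rayleigh-quotient bound, the only cosmetic difference being that the paper writes $\|\vx\|_W^2=\vx^\T(W-\frac{1}{n}\bone\bone^\T)\vx$ and uses $\lambda_n(W)I\preceq W-\frac{1}{n}\bone\bone^\T\preceq\lambda_2(W)I$, whereas you expand $\vx$ explicitly in the orthonormal eigenbasis of $W$. Your column-by-column remark for the matrix case also matches how the paper applies the lemma in the proof of \cref{theo2}.
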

\begin{proof}
	By \cref{assum2}, the eigenvalue 1 is the only eigenvalue of  $W$ with magnitude 1, and the corresponding eigenvector is $\bone_n$. Hence,  $\text{range}(I-W)=\text{range}(I-\frac{1}{n}\bone\bone^\T)$. Since $\vx\in\text{range}(I-W)$, there exists some $\vd$ such that $\vx=(I-\frac{1}{n}\bone\bone^\T)\vd$, which implies that $\|\vx\|_{W}^2=\vd^\T(I-\frac{1}{n}\bone\bone^\T)W(I-\frac{1}{n}\bone\bone^\T)\vd=\vd^\T(I-\frac{1}{n}\bone\bone^\T)(W-\frac{1}{n}\bone\bone^\T)(I-\frac{1}{n}\bone\bone^\T)\vd=\vx^\T(W-\frac{1}{n}\bone\bone^\T)\vx$. Note that $\lambda_n(W) I\preceq W-\frac{1}{n}\bone\bone^\T\preceq\lambda_2(W) I$, and thus the result follows.
\end{proof}

\begin{lemma}\label{lemma1}
	Under \cref{assum2} and let $\bar X=\bone\bar\vx^\T$. It holds that
	\bea\label{eq1_lemma1}
	&\|X^k-\bar X\|^2+\|\widehat X^{k}-X^{k-1}\|_{\gamma(W-I)}^2\\
	&=\|X^{k-1}-\bar X\|_{I+\gamma(W-I)}^2+\|\widehat X^{k}-\bar X\|_{\gamma(W-I)(I+\gamma(W-I))}^2.
	\ena
\end{lemma}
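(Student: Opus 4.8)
The plan is to substitute the state recursion $X^k=X^{k-1}+\gamma(W-I)\widehat X^k$ from \cref{alg_con1} into the left-hand side and expand everything into trace inner products, exploiting two structural facts. First, $A\triangleq\gamma(W-I)$ is symmetric by \cref{assum2}(a). Second, $A$ annihilates the consensus matrix, $A\bar X=0$, because $W\bone=\bone$ under \cref{assum2}, so that $(W-I)\bar X=0$. This second fact is the workhorse: it lets me insert or delete $\bar X$ freely inside any norm whose weight is a polynomial in $A$ with no constant term, since such a weight kills the $\bar X$ correction from both sides of the trace form.

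First I would set $u=X^{k-1}-\bar X$ and $v=\widehat X^k$, and rewrite $X^k-\bar X=u+Av$ (using $A\bar X=0$). Expanding the first term on the left gives $\|X^k-\bar X\|^2=\|u\|^2+2\tr(u^\T A v)+\|v\|_{A^2}^2$, where the last summand is $\langle Av,Av\rangle=\tr(v^\T A^2 v)$ by symmetry of $A$. For the second term on the left, I would note $\widehat X^k-X^{k-1}=v-u-\bar X$; since the weight $A$ satisfies $A\bar X=0$, the $\bar X$ correction drops out and $\|\widehat X^k-X^{k-1}\|_A^2=\|v-u\|_A^2=\|u\|_A^2-2\tr(u^\T A v)+\|v\|_A^2$.

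The cross terms then cancel: by symmetry of $A$ one has $\tr(u^\T A v)=\tr(v^\T A u)$, so the $+2\tr(u^\T A v)$ from the first expansion cancels the $-2\tr(u^\T A v)$ from the second. Collecting the surviving squares yields $\|u\|_{I+A}^2+\|v\|_{A+A^2}^2$ for the whole left-hand side. For the right-hand side, the first term is already $\|u\|_{I+A}^2=\|X^{k-1}-\bar X\|_{I+\gamma(W-I)}^2$, and in the second term I would apply $A\bar X=0$ once more, now to the weight $A(I+A)=A+A^2$, which also annihilates $\bar X$; this lets me replace $\widehat X^k-\bar X$ by $v$ inside the norm, giving $\|v\|_{A+A^2}^2$. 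The two sides then coincide term by term.

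The only delicate point is the bookkeeping of the $\bar X$ corrections: I must check in each weighted norm that the weight is a polynomial in $(W-I)$ with vanishing constant term (here $A$ and $A+A^2$), so that $\bar X$ lies in its kernel and may be dropped symmetrically, and I must invoke symmetry of $A$ at exactly the place where the two cross terms meet. No inequalities or spectral estimates are required—this is a pure algebraic identity—so once the substitution and these two observations are in place, the statement follows from a direct expansion.
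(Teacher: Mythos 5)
Your proof is correct and takes essentially the same route as the paper's: substitute the recursion from \cref{alg_con1}, expand the quadratic forms, and exploit the symmetry of $W$ together with $(W-I)\bar X=0$. The only difference is bookkeeping—the paper centers $\widehat X^k$ at $\bar X$ and re-expresses the cross term via a polarization identity, whereas you leave $\widehat X^k$ uncentered and let the kernel property drop the $\bar X$ corrections from the weighted norms, with the cross terms cancelling directly.
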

\begin{proof}
	It follows from \cref{alg_con1} and \cref{assum2} that
	\bea
	&X^k-\bar X=X^{k-1}-\bar X+\gamma(W-I)\widehat X^{k}\\
	&=X^{k-1}-\bar X+\gamma(W-I)(\widehat X^{k}-\bar X).
	\ena
	This implies that
	\bea\label{eq2_lemma1}
	\|X^k-\bar X\|^2&=\|X^{k-1}-\bar X+\gamma(W-I)(\widehat X^{k}-\bar X)\|^2\\
	&=\|X^{k-1}-\bar X\|^2+\|\widehat X^{k}-\bar X\|_{\gamma^2(W-I)^2}^2\\
	&\quad+2\langle X^{k-1}-\bar X,\widehat X^{k}-\bar X\rangle_{\gamma(W-I)}.
	\ena
	Since $\langle \va,\vb\rangle_M=\|\va+\vb\|_M^2-\|\va\|_M^2-\|\vb\|_M^2$ for any symmetric matrix $M$, we have
	\bea
	&2\langle X^{k-1}-\bar X,\widehat X^{k}-\bar X\rangle_{\gamma(W-I)}\\
	&=\|\widehat X^{k}-X^{k-1}\|_{\gamma(I-W)}^2-\|\widehat X^{k}-\bar X\|_{\gamma(I-W)}^2\\
	&\quad-\|X^{k-1}-\bar X\|_{\gamma(I-W)}^2
	\ena
	which jointly with \cref{eq2_lemma1} completes the proof.
\end{proof}

\begin{lemma}\label{lemma2}
	Under \cref{assum1,assum2}, it holds that
	\bea\label{eq1_lemma2}
	&\bE\|\widehat X^{k+1}- X^k\|^2\\
	&\leq\delta\Big(\|\widehat X^k-X^{k-1}\|_{I-\gamma(W-I)}^2+\|X^{k-1}-\bar X\|_{\gamma(W-I)}^2\\
	&\quad+\|\widehat X^{k}-\bar X\|_{\gamma(W-I)(\gamma(W-I)-I)}^2\Big).
	\ena
\end{lemma}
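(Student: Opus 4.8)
The plan is to split the statement into (i) one application of the compression bound in \cref{assum1} and (ii) a purely algebraic identity for $\|X^k-\widehat X^k\|^2$. First I would read off from the first line of \cref{alg_con1} that
\bee
\widehat X^{k+1}-X^k=\widehat X^k+\sQ(X^k-\widehat X^k)-X^k=\sQ(X^k-\widehat X^k)-(X^k-\widehat X^k),
\ene
so that $\widehat X^{k+1}-X^k$ is exactly the compression error of $\sQ$ acting on the innovation $X^k-\widehat X^k$. The matrix form of \cref{assum1}, $\bE\|\sQ(Z)-Z\|^2\le\delta\|Z\|^2$, then gives $\bE\|\widehat X^{k+1}-X^k\|^2\le\delta\|X^k-\widehat X^k\|^2$ at once. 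Everything after this is deterministic: it remains only to show that $\|X^k-\widehat X^k\|^2$ equals the bracketed sum of three weighted norms, so that the inequality of the lemma is in truth an equality sharpened solely by the compression step.

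For the identity I would set $M\triangleq\gamma(W-I)$, which is symmetric by \cref{assum2}, and introduce the deviations $a\triangleq X^{k-1}-\bar X$ and $b\triangleq\widehat X^k-\bar X$. The consensus property in \cref{assum2} forces $(W-I)\bar X=0$, hence $M\widehat X^k=Mb$; substituting the second line of \cref{alg_con1}, $X^k=X^{k-1}+M\widehat X^k$, collapses everything to the compact form
\bee
X^k-\widehat X^k=a-(I-M)b.
\ene

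It then suffices to expand $\|a-(I-M)b\|^2=\|a\|^2-2\langle a,b\rangle_{I-M}+\|(I-M)b\|^2$ and regroup. Using $\|a\|^2=\|a\|_{I-M}^2+\|a\|_M^2$, the identity $\|(I-M)b\|^2=\|b\|_{(I-M)^2}^2=\|b\|_{I-M}^2+\|b\|_{M(M-I)}^2$, and $\|b-a\|_{I-M}^2=\|a\|_{I-M}^2-2\langle a,b\rangle_{I-M}+\|b\|_{I-M}^2$, the three pieces recombine into
\bee
\|X^k-\widehat X^k\|^2=\|\widehat X^k-X^{k-1}\|_{I-M}^2+\|X^{k-1}-\bar X\|_M^2+\|\widehat X^k-\bar X\|_{M(M-I)}^2,
\ene
which is precisely the bracketed expression once $M=\gamma(W-I)$ is written back. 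Multiplying by $\delta$ and combining with the compression bound finishes the proof.

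I do not foresee any real obstacle. The only points that demand care are bookkeeping ones: the weighted cross term $-2\langle a,b\rangle_{I-M}$ must be reproduced identically by expanding $\|b-a\|_{I-M}^2$, and the elimination of $\bar X$ from $M\widehat X^k$ hinges on the consensus property $(W-I)\bar X=0$. A shortcut would be to lean on the related identity already proved in \cref{lemma1} rather than expanding from scratch, but the direct computation above is so brief that I would carry it out explicitly.
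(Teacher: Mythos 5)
Your proposal is correct and follows essentially the same route as the paper's proof: both reduce $\bE\|\widehat X^{k+1}-X^k\|^2\leq\delta\|X^k-\widehat X^k\|^2$ via \cref{assum1}, then use the update $X^k=X^{k-1}+\gamma(W-I)\widehat X^k$ together with $(W-I)\bar X=0$ to rewrite $X^k-\widehat X^k$, and finally verify by direct expansion that $\|X^k-\widehat X^k\|^2$ equals the bracketed sum of three weighted norms. The only difference is bookkeeping: you regroup via $I=(I-M)+M$ and $(I-M)^2=(I-M)+M(M-I)$, whereas the paper uses the polarization identity $2\langle \va,\vb\rangle_M=\|\va+\vb\|_M^2-\|\va\|_M^2-\|\vb\|_M^2$ on the cross term; these are the same computation.
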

\begin{proof}
	We have from \cref{alg_con1} that
	\bea
	&\bE\|\widehat X^{k+1}- X^k\|^2=\bE\|\sQ(X^k-\widehat X^k)-(X^k-\widehat X^k)\|^2\\
	&\leq\delta\|\widehat X^k-X^k\|^2=\delta\|\widehat X^k-X^{k-1}-\gamma(W-I)(\widehat X^{k}-\bar X)\|^2\\
	&=\delta\big(\|\widehat X^k-X^{k-1}\|^2+2\langle X^{k-1}-\widehat X^k,\widehat X^{k}-\bar X\rangle_{\gamma(W-I)}\\
	&\quad+\|\widehat X^{k}-\bar X\|_{\gamma^2(W-I)^2}^2\big)\\
	&=\delta\big(\|\widehat X^k-X^{k-1}\|_{I-\gamma(W-I)}^2+\|X^{k-1}-\bar X\|_{\gamma(W-I)}^2\\
	&\quad+\|\widehat X^{k}-\bar X\|_{\gamma(W-I)(\gamma(W-I)-I)}^2\big)
	\ena
	where the inequality follows from \cref{assum1}, and the last equality follows from
	\bea
	&2\langle X^{k-1}-\widehat X^k,\widehat X^{k}-\bar X\rangle_{\gamma(W-I)}\\
	&=\|X^{k-1}-\bar X\|_{\gamma(W-I)}^2-\|X^{k-1}-\widehat X^k\|_{\gamma(W-I)}^2\\
	&\quad-\|\widehat X^{k}-\bar X\|_{\gamma(W-I)}^2.
	\ena
	Combining the above completes the proof.
\end{proof}

\begin{proof}[Proof of \cref{theo2}] Let $q=\frac{1+\gamma(1-\lambda_n(W))}{1-\gamma(1-\lambda_n(W))}\delta>\delta$. Multiplying \cref{eq1_lemma1} by $q$ and then adding it to \cref{eq1_lemma2}, we obtain
	\bea\label{eq1_theo1}
	&\|X^k-\bar X\|_{qI}^2+\bE\|\widehat X^{k+1}- X^k\|^2\\
	&\leq\|X^{k-1}-\bar X\|_{q(I+\gamma(W-I))+\delta\gamma(W-I)}^2\\
	&\quad+\|\widehat X^k-X^{k-1}\|_{\delta(I-\gamma(W-I))-q\gamma(W-I)}^2\\
	&\quad+\|\widehat X^{k}-\bar X\|_{\gamma(W-I)(qI-\delta I+(q+\delta)\gamma(W-I))}^2.
	\ena
	For any $\gamma\in(0,(1-\lambda_n(W))^{-1})$, it follows from \cref{assum2} that $\widetilde W=\sqrt{\gamma(I-W)}\succeq 0$. We have
	\bea
	&\gamma(W-I)(q I-\delta I+(q+\delta)\gamma(W-I))\\
	&=\frac{2\delta\gamma \widetilde W\big((I-W)-(1-\lambda_n(W))I\big)\widetilde W}{1-\gamma(1-\lambda_n(W))}\preceq  0.
	\ena
	It follows from \cref{eq1_theo1}  that
	\bea
	&\|X^k-\bar X\|_{qI}^2+\bE\|\widehat X^{k+1}- X^k\|^2\\
	&\leq\|X^{k-1}-\bar X\|_{q(I+\gamma(W-I))+\delta\gamma(W-I)}^2\\
	&\quad+\|\widehat X^k-X^{k-1}\|_{\delta(I-\gamma(W-I))-q\gamma(W-I)}^2\\
	&\leq\|X^{k-1}-\bar X\|_{qI+(q+\delta)\gamma(\lambda_2(W)-1)I}^2\\
	&\quad+\|\widehat X^k-X^{k-1}\|_{\delta I-(q+\delta)\gamma(\lambda_n(W)-1)I}^2\\
	&\leq\sigma\Big(\|X^{k-1}-\bar X\|_{qI}^2+\|\widehat X^k-X^{k-1}\|^2\Big)
	\ena
	where the second inequality follows from \cref{lemma3} by noticing that each column of $X^{k-1}-\bar X=(I-\frac{1}{n}\bone\bone^\T)X^{k-1}$ belongs to $\text{range}(I-W)$, and $\sigma$ is defined as follows
	\bea\label{eq3_theo1}
	\sigma&=\max\Big\{1-(1+{\delta}/{q})\gamma(1-\lambda_2), \delta+(q+\delta)\gamma(1-\lambda_n)\Big\}
	\\&=\max\Big\{1-\frac{2\gamma(1-\lambda_2)}{1+\gamma(1-\lambda_n)},\frac{\delta(1+\gamma(1-\lambda_n))}{1-\gamma(1-\lambda_n)}\Big\}\\
	&<1
	\ena
	where $\lambda_2$ and $\lambda_n$ are abbreviations for $\lambda_2(W)$ and $\lambda_n(W)$, respectively, and the last inequality follows from $\gamma<\frac{1-\delta}{(1+\delta)(1-\lambda_n(W))}$. Then, the first part of \cref{theo2} follows by taking full expectations on both sides.

	Substituting $\gamma=\frac{1-\delta}{(3+\delta)(1-\lambda_n(W))}$ into \cref{eq3_theo1}, we obtain
	\bea
	\sigma&=\max\Big\{1-\frac{(1-\delta)(1-\lambda_2(W))}{2(1-\lambda_n(W))},\frac{2\delta}{1+\delta}\Big\}\\
	&=1-\frac{(1-\delta)(1-\lambda_2(W))}{2(1-\lambda_n(W))}.
	\ena
	The desired result follows.
\end{proof}

\section{Proofs of \cref{lemma5,lemma9}}\label{adx2}

\subsection{Proof of \cref{lemma5}}

\begin{proof}
	We have
	\bea\label{eq2_lemma5}
	Y^k&\overset{\cref{alg_opt1a}}{=}X^k-\gamma \Psi^k-\gamma \nabla F^k\\
	&\overset{\cref{alg_opt1a}}{=}Y^{k-1}+X^k-X^{k-1}+\gamma (\Psi^{k-1}-\Psi^k)\\
	&\quad+\gamma (\nabla F^{k-1}-\nabla F^k)\\
	&\overset{\cref{alg_opt1d}}{=}Y^{k-1}+\gamma (\Psi^{k-1}-2\Psi^k)-\gamma\nabla F^k\\
	&\overset{\text{Lemma}~\ref{fixed_point}}{=}Y^{k-1}+\gamma (\Psi^{k-1}-\Psi^k)-\gamma(\Psi^k-\Psi^\star)\\
	&\qquad-\gamma(\nabla F^k-\nabla F^\star).
	\ena
	Therefore,
	\bea
	&\bE\|\widehat Y^{k+1}-Y^k\|^2\\
	&\overset{\cref{alg_opt1b}}{=}\bE\|\sQ(Y^k-\widehat Y^{k})-(Y^k-\widehat Y^{k})\|^2\leq\delta \|\widehat Y^{k}-Y^k\|^2\\
	&\overset{\cref{eq2_lemma5}}{=}\delta \|\widehat Y^{k}-Y^{k-1}+\gamma(\Psi^k-\Psi^\star)+\gamma (\Psi^k-\Psi^{k-1})\\
	&\quad+\gamma(\nabla F^k-\nabla F^\star)\|^2\\
	&\leq\frac{2\delta}{1+\delta}\|\widehat Y^{k}-Y^{k-1}\|^2\\
	&\quad+\frac{2\delta\gamma^2}{1-\delta}\|\Psi^k-\Psi^\star+\Psi^k-\Psi^{k-1}+\nabla F^k-\nabla F^\star\|^2
	\ena
	where the first inequality follows from \cref{assum1} and the last inequality used the relation $\|\va+\vb\|^2\leq(1+c)\|\va\|^2+(1+\frac{1}{c})\|\vb\|^2,\forall \va,\vb$ with $c=\frac{1-\delta}{1+\delta}$.
\end{proof}

\subsection{Proof of \cref{lemma9}}

We need several lemmas first.
\begin{lemma}\label{lemma6}
	Under \cref{assum1,assum2,assum4}. The following relation holds for all $k\geq 1$,
	\bea\label{eq3_lemma3}
	&\|X^{k+1}-X^\star\|_{\gamma^{-1}}^2+\|\Psi^{k+1}-\Psi^\star\|_\Theta^2\\
	&=\|X^{k}-X^\star\|_{\gamma^{-1}}^2+\|\Psi^{k}-\Psi^\star\|_\Theta^2-\|X^{k+1}-X^{k}\|_{\gamma^{-1}}^2\\
	&\quad-\|\Psi^{k+1}-\Psi^{k}\|_\Theta^2-2\langle X^k-X^\star,\nabla F^k-\nabla F^\star\rangle\\
	&\quad-2\langle \nabla F^k-\nabla F^\star, X^{k+1}-X^k\rangle\\
	&\quad+2\langle\Psi^{k+1}-\Psi^\star,\widehat Y^{k+1}-Y^k\rangle
	\ena
\end{lemma}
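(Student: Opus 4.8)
The plan is to prove this as a purely algebraic energy identity, relying only on the update rules \cref{alg_opt1a,alg_opt1c,alg_opt1d}, the fixed-point characterization in \cref{fixed_point}, and the elementary polarization identity $\|u\|_M^2=\|v\|_M^2-\|u-v\|_M^2+2\langle u,u-v\rangle_M$, valid for any symmetric $M$. First I would apply this identity twice: once with $M=\gamma^{-1}I$, $u=X^{k+1}-X^\star$, $v=X^k-X^\star$, and once with $M=\Theta$, $u=\Psi^{k+1}-\Psi^\star$, $v=\Psi^k-\Psi^\star$. Summing the two instances immediately produces $\|X^k-X^\star\|_{\gamma^{-1}}^2+\|\Psi^k-\Psi^\star\|_\Theta^2$, the two negative squared-difference terms, and two residual cross terms, namely $2\langle X^{k+1}-X^\star,X^{k+1}-X^k\rangle_{\gamma^{-1}}$ and $2\langle\Psi^{k+1}-\Psi^\star,\Psi^{k+1}-\Psi^k\rangle_\Theta$. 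It then remains to show that the sum of these two cross terms equals $-2\langle X^k-X^\star,\nabla F^k-\nabla F^\star\rangle-2\langle\nabla F^k-\nabla F^\star,X^{k+1}-X^k\rangle+2\langle\Psi^{k+1}-\Psi^\star,\widehat Y^{k+1}-Y^k\rangle$.

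For the $X$-cross term, I would use \cref{alg_opt1d} in the form $\gamma^{-1}(X^{k+1}-X^k)=-\nabla F^k-\Psi^{k+1}$ together with $\Psi^\star=-\nabla F^\star$ from \cref{fixed_point} to rewrite $\gamma^{-1}(X^{k+1}-X^k)=-(\nabla F^k-\nabla F^\star)-(\Psi^{k+1}-\Psi^\star)$. Inserting this and splitting $X^{k+1}-X^\star=(X^k-X^\star)+(X^{k+1}-X^k)$ in the gradient inner product yields exactly the two gradient terms of the target, plus a leftover $-2\langle X^{k+1}-X^\star,\Psi^{k+1}-\Psi^\star\rangle$ that must be absorbed by the $\Psi$-cross term.

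The heart of the argument, and the step I expect to be the main obstacle, is reconciling the $\Psi$-cross term with this leftover. Here I would use \cref{alg_opt1c}, $\Psi^{k+1}-\Psi^k=\tau(I-W)\widehat Y^{k+1}$, and the definition $\Theta=\tau^{-1}(I-W)^\dagger-\gamma I$, so that $\Theta(\Psi^{k+1}-\Psi^k)=(I-W)^\dagger(I-W)\widehat Y^{k+1}-\gamma(\Psi^{k+1}-\Psi^k)$. Since $I-W$ is symmetric, $(I-W)^\dagger(I-W)=P$ is the orthogonal projector onto $\text{range}(I-W)$. Combining $X^{k+1}=Y^k-\gamma(\Psi^{k+1}-\Psi^k)$, obtained by subtracting \cref{alg_opt1a} from \cref{alg_opt1d}, a direct computation gives $\Theta(\Psi^{k+1}-\Psi^k)-(X^{k+1}-X^\star)-(\widehat Y^{k+1}-Y^k)=(P-I)\widehat Y^{k+1}+X^\star$, whose every column lies in $\text{span}(\bone_n)=\text{null}(I-W)$. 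The crucial observation is that, as noted after \cref{fixed_point}, $\Psi^{k+1}-\Psi^\star\in\text{range}(I-W)$, so this residual is orthogonal to $\Psi^{k+1}-\Psi^\star$ and drops out when tested against it; hence $2\langle\Psi^{k+1}-\Psi^\star,\Theta(\Psi^{k+1}-\Psi^k)\rangle-2\langle X^{k+1}-X^\star,\Psi^{k+1}-\Psi^\star\rangle=2\langle\Psi^{k+1}-\Psi^\star,\widehat Y^{k+1}-Y^k\rangle$, which completes the identity. The only subtlety to watch is the correct interaction between the pseudoinverse and the projector, which relies precisely on both $\Psi$-iterates, and therefore their differences, lying in $\text{range}(I-W)$.
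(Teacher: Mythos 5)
Your proposal is correct and is essentially the paper's own proof run in reverse order: the paper first derives the inner-product identity $\langle\widetilde \Psi^{k+1},\widetilde X^{k+1}\rangle = \langle\widetilde \Psi^{k+1},\Psi^{k+1}-\Psi^k\rangle_\Theta-\langle\widetilde \Psi^{k+1},\widehat Y^{k+1}-Y^k\rangle$ (its \cref{eq1_lemma6}) and a companion identity for the gradient cross term, and only then converts cross terms into norm differences, whereas you apply the polarization identity first and then reduce the two leftover cross terms. Your residual computation $\Theta(\Psi^{k+1}-\Psi^k)-(X^{k+1}-X^\star)-(\widehat Y^{k+1}-Y^k)=(P-I)\widehat Y^{k+1}+X^\star$, killed by testing against $\Psi^{k+1}-\Psi^\star\in\text{range}(I-W)$, is exactly the paper's \cref{eq1_lemma6} in disguise, resting on the same ingredients: the update rules, $\Psi^\star=-\nabla F^\star$, the definition of $\Theta$, the projector identity for $(I-W)^\dagger(I-W)$, and orthogonality to the consensus subspace.
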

\begin{proof}
	We have
	\bea\label{eq1_lemma6}
	&\langle\widetilde \Psi^{k+1},\widetilde X^{k+1}\rangle\\
	&\overset{\cref{alg_opt1d}}{=}\langle\widetilde \Psi^{k+1},X^k-\gamma\nabla F^k-\gamma\Psi^{k+1}-X^\star\rangle\\
	&\overset{\cref{alg_opt1a}}{=}\langle\widetilde \Psi^{k+1}, Y^k+\gamma\Psi^{k}-\gamma\Psi^{k+1}-X^\star\rangle\\
	&=\langle\widetilde \Psi^{k+1}, (I-W)Y^k\rangle_{(I-W)^\dagger}\\
	&\quad+\langle\widetilde \Psi^{k+1},\gamma\Psi^{k}-\gamma\Psi^{k+1}-X^\star\rangle\\
	&\overset{\cref{alg_opt1c}}{=}\langle\widetilde \Psi^{k+1},\tau^{-1}(\Psi^{k+1}-\Psi^k)\\
	&\quad-(I-W)(\widehat Y^{k+1}-Y^k)\rangle_{(I-W)^\dagger}+\gamma\langle\widetilde \Psi^{k+1},\Psi^{k}-\Psi^{k+1}\rangle\\
	&=\langle\widetilde \Psi^{k+1},\Psi^{k+1}-\Psi^k\rangle_\Theta-\langle\widetilde \Psi^{k+1},\widehat Y^{k+1}-Y^k\rangle
	\ena
	where we used $\widetilde \Psi^k\in\text{range}(I-W)$ and hence $\langle\widetilde \Psi^{k+1},\vd\rangle_{(I-W)^\dagger(I-W)}=\langle\widetilde \Psi^{k+1},\vd\rangle,\forall \vd$ and $\langle\widetilde \Psi^{k+1},X^\star\rangle=0$. Therefore,
	\bea\label{eq2_lemma6}
	&\langle \widetilde X^k,\nabla F^k-\nabla F^\star\rangle\\
	&\overset{\cref{alg_opt1d}}{=}\langle \widetilde X^k,\gamma^{-1}(X^k-X^{k+1})-\Psi^{k+1}+\Psi^\star\rangle\\
	&=\langle \widetilde X^k,X^k-X^{k+1}\rangle_{\gamma^{-1}}-\langle \widetilde X^{k+1},\widetilde \Psi^{k+1}\rangle\\
	&\quad+\langle X^{k+1}-X^k,\widetilde \Psi^{k+1}\rangle\\
	&\overset{\cref{eq1_lemma6}}{=}\langle \widetilde \Psi^{k+1}-\gamma^{-1}(\widetilde X^k),X^{k+1}-X^k\rangle\\
	&\quad-\langle\widetilde \Psi^{k+1},\Psi^{k+1}-\Psi^k\rangle_\Theta+\langle\widetilde \Psi^{k+1},\widehat Y^{k+1}-Y^k\rangle\\
	&\overset{\cref{alg_opt1d}}{=}\langle \nabla F^k-\nabla F^\star+\gamma^{-1}(\widetilde X^{k+1}),X^k-X^{k+1}\rangle\\
	&\quad-\langle\widetilde \Psi^{k+1},\Psi^{k+1}-\Psi^k\rangle_\Theta+\langle\widetilde \Psi^{k+1},\widehat Y^{k+1}-Y^k\rangle\\
	&=\langle \widetilde X^{k+1}, X^k-X^{k+1}\rangle_{\gamma^{-1}}+\langle \nabla F^k-\nabla F^\star, X^k-X^{k+1}\rangle\\
	&\quad-\langle\widetilde \Psi^{k+1},\Psi^{k+1}-\Psi^k\rangle_\Theta+\langle\widetilde \Psi^{k+1},\widehat Y^{k+1}-Y^k\rangle
	\ena
	Using $2\langle \va,\vb  \rangle=\|\va+\vb\|^2-\|\va\|^2-\|\vb\|^2$, we have
	\bea
	&2\langle \widetilde X^k,\nabla F^k-\nabla F^\star\rangle+2\langle \nabla F^k-\nabla F^\star, X^{k+1}-X^k\rangle\\
	&\overset{\cref{eq2_lemma6}}{=}2\langle \widetilde X^{k+1}, X^k-X^{k+1}\rangle_{\gamma^{-1}}+2\langle\widetilde \Psi^{k+1},\Psi^k-\Psi^{k+1}\rangle_\Theta\\
	&\quad+2\langle\widetilde \Psi^{k+1},\widehat Y^{k+1}-Y^k\rangle\\
	&=\|\widetilde X^k\|_{\gamma^{-1}}^2-\|\widetilde X^{k+1}\|_{\gamma^{-1}}^2-\|X^{k+1}-X^{k}\|_{\gamma^{-1}}^2\\
	&\quad+\|\widetilde \Psi^k\|_\Theta^2-\|\widetilde \Psi^{k+1}\|_\Theta^2-\|\Psi^{k+1}-\Psi^{k}\|_\Theta^2\\
	&\quad+2\langle\widetilde \Psi^{k+1},\widehat Y^{k+1}-Y^k\rangle
	\ena
	The desired result is obtained.
\end{proof}

\begin{lemma}\label{lemma8}
	Under \cref{assum4}, we have for all $X,Y\in\bR^{n\times p}$ that
	\bea
	&\langle X-Y,\nabla F(X)-\nabla F(Y)\rangle\\
	&\geq \frac{\mu L}{\mu+L}\|X-Y\|^2+\frac{1}{\mu+L}\|\nabla F(X)-\nabla F(Y)\|^2.
	\ena
\end{lemma}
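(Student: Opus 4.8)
The plan is to exploit the separable structure $F(X)=\sum_{i=1}^n f_i(\vx_i)$ to reduce the matrix inequality to a per-node one, and then to prove the per-node inequality by the classical ``shift-by-quadratic'' argument. Writing $X=[\vx_1,\dots,\vx_n]^\T$ and $Y=[\vy_1,\dots,\vy_n]^\T$, both sides split additively: $\langle X-Y,\nabla F(X)-\nabla F(Y)\rangle=\sum_i\langle \vx_i-\vy_i,\nabla f_i(\vx_i)-\nabla f_i(\vy_i)\rangle$, $\|X-Y\|^2=\sum_i\|\vx_i-\vy_i\|^2$, and likewise for $\|\nabla F(X)-\nabla F(Y)\|^2$. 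Hence it suffices to establish, for each single $\mu$-strongly convex and $L$-smooth $f_i$ and arbitrary $\vx,\vy$, that $\langle \vx-\vy,\nabla f_i(\vx)-\nabla f_i(\vy)\rangle\geq\frac{\mu L}{\mu+L}\|\vx-\vy\|^2+\frac{1}{\mu+L}\|\nabla f_i(\vx)-\nabla f_i(\vy)\|^2$, after which summing over $i$ recovers the stated bound.

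For the single-function inequality I would introduce the shifted function $g(\vx)\triangleq f_i(\vx)-\frac{\mu}{2}\|\vx\|^2$. Under \cref{assum4}, $g$ is convex (since $f_i$ is $\mu$-strongly convex) and $(L-\mu)$-smooth (its Hessian, where it exists, is squeezed into $[0,(L-\mu)I]$). The workhorse is the co-coercivity of the gradient of a convex $(L-\mu)$-smooth function, namely $\langle \vx-\vy,\nabla g(\vx)-\nabla g(\vy)\rangle\geq\frac{1}{L-\mu}\|\nabla g(\vx)-\nabla g(\vy)\|^2$; this itself follows from the convex descent-lemma bound $g(\vy)\geq g(\vx)+\langle\nabla g(\vx),\vy-\vx\rangle+\frac{1}{2(L-\mu)}\|\nabla g(\vx)-\nabla g(\vy)\|^2$ applied twice with $\vx,\vy$ swapped and added. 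Substituting $\nabla g(\vx)=\nabla f_i(\vx)-\mu\vx$ and abbreviating $\va=\vx-\vy$, $\vb=\nabla f_i(\vx)-\nabla f_i(\vy)$, this reads $\langle\va,\vb\rangle-\mu\|\va\|^2\geq\frac{1}{L-\mu}\|\vb-\mu\va\|^2$; clearing the denominator and expanding $\|\vb-\mu\va\|^2=\|\vb\|^2-2\mu\langle\va,\vb\rangle+\mu^2\|\va\|^2$ yields $(\mu+L)\langle\va,\vb\rangle\geq\|\vb\|^2+\mu L\|\va\|^2$, which is exactly the desired per-node inequality after dividing by $\mu+L$.

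The one place that needs care is the degenerate case $\mu=L$, where the shift $g$ has smoothness constant $L-\mu=0$ and the co-coercivity step divides by zero. There $g$ is simultaneously convex and $0$-smooth, hence affine, so $\nabla f_i(\vx)-\nabla f_i(\vy)=\mu(\vx-\vy)$ identically; substituting this into the target turns it into the identity $\mu\|\va\|^2=\frac{\mu^2}{2\mu}\|\va\|^2+\frac{1}{2\mu}\mu^2\|\va\|^2$, so it holds with equality. (Equivalently, one may prove it for $\mu<L$ and pass to the limit $\mu\to L$ by continuity.) I expect this boundary case to be the only genuine subtlety; the separability reduction is immediate from the definition of $F$, and the remainder is the standard monotone-operator quadratic-completion computation.
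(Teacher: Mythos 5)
Your proposal is correct and takes essentially the same route as the paper: both reduce the matrix inequality to the per-node one via the separable structure of $F$, and the per-node bound you derive is exactly the inequality the paper invokes by citing Theorem 2.1.12 of Nesterov's book. The only difference is that you prove that cited inequality from scratch (via the shift $g=f_i-\frac{\mu}{2}\|\cdot\|^2$, co-coercivity of $\nabla g$, and separate treatment of the boundary case $\mu=L$), which is precisely the standard textbook proof of the result the paper references.
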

\begin{proof}
	It follows from Theorem 2.1.12 in \cite{nesterov2013introductory} that
	\bea
	&\langle\nabla f_i(\vx)-\nabla f_i(\vy),\vx-\vy\rangle\\
	&\geq\frac{\mu L}{\mu+L}\|\vx-\vy\|^2+\frac{1}{\mu+L}\|\nabla f_i(\vx)-\nabla f_i(\vy)\|^2,\forall i\in\cV.
	\ena
	The result then follows directly from the definition of $\nabla F$.
\end{proof}

\begin{proof}[Proof of \cref{lemma9}]
	For any symmetric $\Theta$, we have $2\langle \va,\vb  \rangle_\Theta=\|\va+\vb\|_{\Theta}^2-\|\va\|_{\Theta}^2-\|\vb\|_{\Theta}^2,\forall \va,\vb$, and hence
	\bea\label{eq1_lemma9}
	&2\langle \widetilde X^k,\nabla F^k-\nabla F^\star\rangle+2\langle \nabla F^k-\nabla F^\star, X^{k+1}-X^k\rangle\\
	&=2\langle \widetilde X^k,\nabla F^k-\nabla F^\star\rangle\\
	&\quad+\|X^{k+1}-X^k+\gamma\nabla F^k-\gamma\nabla F^\star\|_{\gamma^{-1}}^2\\
	&\quad-\|\nabla F^k-\nabla F^\star\|_{\gamma}^2-\|X^{k+1}-X^k\|_{\gamma^{-1}}^2\\
	&\geq \frac{2\mu L\gamma}{\mu+L}\|\widetilde X^k\|_{\gamma^{-1}}^2-\|X^{k+1}-X^k\|_{\gamma^{-1}}^2+\|\widetilde \Psi^{k+1}\|_{\gamma}^2\\
	&\quad+\Big(\frac{2\gamma^{-1}}{\mu+L}-I\Big)\|\nabla F^k-\nabla F^\star\|_{\gamma}^2
	\ena
	where the inequality follows from \cref{lemma8} and \cref{alg_opt1c}. Therefore,
	\bea
	&\|\widetilde X^{k+1}\|_{\gamma^{-1}}^2+\|\widetilde \Psi^{k+1}\|_\Theta^2-2\langle\widetilde \Psi^{k+1},\widehat Y^{k+1}-Y^k\rangle\\
	&\quad+\|\Psi^{k+1}-\Psi^{k}\|_\Theta^2\\
	&\overset{\cref{eq3_lemma3}}{=}\|\widetilde X^k\|_{\gamma^{-1}}^2+\|\widetilde \Psi^k\|_\Theta^2-\|X^{k+1}-X^{k}\|_{\gamma^{-1}}^2\\
	&\quad-2\langle \widetilde X^k,\nabla F^k-\nabla F^\star\rangle-2\langle \nabla F^k-\nabla F^\star, X^{k+1}-X^k\rangle\\
	&\overset{\cref{eq1_lemma9}}{\leq} \Big(1-\frac{2\mu L\gamma}{\mu+L}\Big)\|\widetilde X^k\|_{\gamma^{-1}}^2-\|\widetilde \Psi^{k+1}\|_{\gamma}^2\\
	&\quad+\|\widetilde \Psi^k\|_\Theta^2+\Big(1-\frac{2\gamma^{-1}}{\mu+L}\Big)\|\nabla F^k-\nabla F^\star\|_{\gamma}^2
	\ena
	The desired result is then obtained.
\end{proof}

\section{Proof of \cref{theo5}}\label{adx3}
We first show some properties of the norm $\|A\|_\text{max}\triangleq \max_{i}\|\va_{i*}\|_{\p}$.
\begin{lemma}\label{lemma7}
	\begin{enumerate}[label=(\alph*),noitemsep]
		\item For any $A\in\bR^{n\times d}$, we have $\|A\|_\text{max}\leq \bar{c}\|A\|$ and $\|A\|\leq \underline{c}\|A\|_\text{max}$, where $\bar{c}=d^{1/p-1/2}$ and $\underline{c}=\sqrt{n}$ for $p\in[1,2)$, and $\bar{c}=1$ and $\underline{c}=\sqrt{n}d^{1/2-1/p}$ for $p\geq 2$.
		\item For any two matrices $A$ and $B$ with compatible sizes, it holds that $\maxn{AB}\leq\|A\|_{\infty}\maxn{B}$.
	\end{enumerate}
\end{lemma}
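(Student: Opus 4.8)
The plan is to reduce everything to elementary comparisons between vector $p$-norms, exploiting that $\maxn{A}=\max_i\|\va_{i\cdot}\|_{\p}$ is assembled row-wise from the $p$-norm while the Frobenius norm satisfies $\|A\|^2=\sum_i\|\va_{i\cdot}\|_2^2$.

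For part (a), I would first recall the standard norm sandwich for a fixed $\vv\in\bR^d$: for $1\le \p<2$ one has $\|\vv\|_2\le\|\vv\|_{\p}\le d^{1/\p-1/2}\|\vv\|_2$, whereas for $\p\ge 2$ one has $\|\vv\|_{\p}\le\|\vv\|_2\le d^{1/2-1/\p}\|\vv\|_{\p}$. To get $\maxn{A}\le\bar c\,\|A\|$, I apply the upper half of this sandwich to each row $\va_{i\cdot}$, take the maximum over $i$, and bound $\max_i\|\va_{i\cdot}\|_2\le(\sum_i\|\va_{i\cdot}\|_2^2)^{1/2}=\|A\|$; this yields $\bar c=d^{1/\p-1/2}$ for $\p<2$ and $\bar c=1$ for $\p\ge2$. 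For the reverse bound $\|A\|\le\underline c\,\maxn{A}$, I start from $\|A\|^2=\sum_i\|\va_{i\cdot}\|_2^2$, bound each $\|\va_{i\cdot}\|_2$ by the appropriate multiple of $\|\va_{i\cdot}\|_{\p}$, and then replace every one of the $n$ row-norms by their maximum; the sum of $n$ copies of the maximum produces the factor $\sqrt n$, giving $\underline c=\sqrt n$ for $\p<2$ and $\underline c=\sqrt n\,d^{1/2-1/\p}$ for $\p\ge2$.

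For part (b), I would write the $i$-th row of $AB$ as the linear combination $\sum_k[A]_{ik}\vb_{k\cdot}$ of the rows of $B$, apply the triangle inequality for $\|\cdot\|_{\p}$, and use $\|\vb_{k\cdot}\|_{\p}\le\maxn{B}$ to obtain $\|(AB)_{i\cdot}\|_{\p}\le\big(\sum_k|[A]_{ik}|\big)\maxn{B}$. Taking the maximum over $i$ converts $\sum_k|[A]_{ik}|$ into the induced matrix norm $\inftyn{A}$ (the maximum absolute row sum), which is exactly the claimed bound.

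There is no deep obstacle here; the statement is a packaging of routine norm equivalences. The only points demanding care are choosing the correct direction of the $p$-versus-$2$ comparison in each of the two regimes $\p<2$ and $\p\ge2$ so that the constants $\bar c$ and $\underline c$ emerge with precisely the stated exponents, and recognizing that the $\sqrt n$ loss in the lower bound is an unavoidable artifact of passing from an $\ell_2$-sum over the $n$ rows to a maximum over rows.
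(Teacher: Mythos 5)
Your proposal is correct and follows essentially the same route as the paper: part (a) is exactly the norm-equivalence/H\"older argument the paper invokes in one line (you simply spell out the row-wise sandwich between $\|\cdot\|_{\p}$ and $\|\cdot\|_2$ and the passage from the $\ell_2$-sum over rows to the row maximum), and part (b) reproduces the paper's computation $\maxn{AB}=\max_i\|\va_{i\cdot}B\|_{\p}\leq\max_i\sum_j|[A]_{ij}|\,\|\vb_{j\cdot}\|_{\p}\leq\inftyn{A}\maxn{B}$ verbatim. No gaps.
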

\begin{proof}
	(a) The result follows from the definition of $p$-norm and H\"older's inequality \cite{boyd2004convex}.

	(b) The result holds since $\|AB\|_\text{max}= \max_{i}\|\va_{i*}B\|_{\p}\leq\max_{i}\sum_{j=1}^n\|a_{ij}\vb_{j*}\|_{\p}\leq\max_{i}\|\va_{i*}\|_1\max_j\|\vb_{j*}\|_{\p}=\|A\|_{\infty}\maxn{B}$.
\end{proof}

\begin{proof}[Proof of \cref{theo5}]
	We prove the result by mathematical induction.   It can be easily checked that \cref{eq1_theo5} holds for $k=0$. Suppose that 
	\bea
		\|\widehat X^k-X^k\|_{\text{max}}&\leq c_s\beta^k=s^k,\\
		\|X^k-\bar X\|&\leq \|X^0-\bone\bar\vx^\T\|\beta^k\leq\varsigma c_s\beta^k\label{mi2}
	\ena
	 for some $k\geq 0$, where $\bar X=\bone\bar\vx^\T$. We have
	\bea\label{eq2_theo5}
	&\maxn{\widehat X^{k+1}-X^{k}}=\maxn{s^k\sQ(\Delta^k/s^k)-\Delta^k}\\
	&=s^k\maxn{\sQ(\Delta^k/s^k)-\Delta^k/s^k}\leq s^k\delta=\delta c_s\beta^k
	\ena
	where the inequality follows from \cref{assum3} and \eqref{mi2}. Note that $\underline\beta<1$, it implies that
	\bea
	&\maxn{\widehat X^{k+1}-X^{k+1}}\\
	&=\maxn{\widehat X^{k+1}-X^k+\gamma(I-W)\widehat X^{k+1}}\\
	&=\maxn{(I+\gamma(I-W))(\widehat X^{k+1}-X^k)\\
		&\qquad+\gamma(I-W)(X^k-\bar X)}\\
	&\leq(1+c_W\gamma)\maxn{\widehat X^{k+1}-X^k}+c_W\bar{c}\gamma\|X^k-\bar X\|\\
	&\leq\big((1+2\gamma)\delta +2\bar{c}\gamma \varsigma\big) c_s\beta^k\\
	&=\Big(\frac{1}{2}+\delta\big(\frac{1+2\gamma}{2}+\frac{2\underline{c}\bar{c}\gamma}{\rho}\big)\Big) c_s\beta^k\leq c_s\beta^{k+1}
	\ena
	where $c_W=1+\|W\|_{\infty}= 2$. The first inequality used \cref{lemma7} and \cref{assum2}, and the second inequality follows from \eqref{mi2} and \cref{eq2_theo5}.

	Moreover, we have
	\bea\label{eq3_theo5}
	&\twon{X^{k+1}-\bar X}\\
	&\overset{\cref{alg_con2}}{=}\twon{X^k-\bar X+\gamma(W-I)(\widehat X^{k+1}-X^k)\\
		&\qquad+\gamma(W-I)(X^k-\bar X)}\\
	&=\twon{(I+\gamma(W-I))(X^k-\bar X)+\gamma(W-I)(\widehat X^{k+1}-X^k)}\\
	&\leq(1-\gamma\rho)\twon{X^k-\bar X}+2\gamma \underline{c}\maxn{\widehat X^{k+1}-X^k}
	\ena
	where the inequality follows from \cref{lemma3,lemma7}.

	Let $\underline{\varsigma}={2\delta \underline{c}}/{\rho}\geq 0$ and $\bar \varsigma=\frac{1-(1+2\gamma)\delta}{2\gamma}>0$. We have $\underline{\varsigma}\leq\bar \varsigma$ and
	\bee\label{eq4_theo5}
	\varsigma=\frac{1}{2}(\underline{\varsigma}+\bar\varsigma)=\frac{\delta \underline{c}}{\rho}+\frac{1-(1+2\gamma)\delta}{4\gamma}.
	\ene

	By \eqref{mi2}, it follows from \cref{eq3_theo5} that
	\bea
	\twon{X^{k+1}-\bar X}&\leq\Big((1-\gamma\rho)+{2\gamma \underline{c}\delta}/{\varsigma}\Big)\varsigma c_s\beta^{k}\\
	&\leq\Big((1-\gamma\rho)+{\gamma \underline{c}\delta}/{\underline\varsigma}+{\gamma \underline{c}\delta}/{\bar\varsigma}\Big)\varsigma c_s\beta^{k}\\
	&=\Big(1-\frac{\gamma\rho}{2}+\frac{2\delta\gamma^2  \underline{c}\bar{c}}{1-(1+2\gamma)\delta}\Big)\varsigma c_s\beta^{k}\\
	&\leq\varsigma c_s\beta^{k+1}
	\ena
	where the second inequality used the relation $\frac{1}{\varsigma}\leq\frac{1}{2\underline{\varsigma}}+\frac{1}{2\bar{\varsigma}}$. 
	Thus, we have shown that $\twon{X^{k+1}-\bar X}\leq \|X^0-\bone\bar\vx^\T\|\beta^{k+1}$ and $\maxn{\widehat X^{k+1}-X^{k+1}}\leq c_s\beta^{k+1}$. Thus, he result holds by using $c_p=\bar{c}\underline{c}\leq\sqrt{n}d^{|\frac{1}{2}-\frac{1}{\vp}|}$ from \cref{lemma7}.
\end{proof}

\section{Proof of \cref{theo3}}\label{adx4}
\subsection{Preliminary lemmas}
\begin{lemma}\label{lemma10}
	Under \cref{assum4,assum2,assum3}. Let $\{X^k\}$ and $\{\Psi^k\}$ be generated by \cref{alg_opt2}. The following relation holds,
	\bea\label{eq1_lemma10}
	&\|X^{k+1}-X^\star\|_{\gamma^{-1}}^2+\|\Psi^{k+1}-\Psi^\star\|_{\Theta+\gamma I}^2\\
	&\quad-2\langle\Psi^{k+1}-\Psi^\star,\widehat Y^{k+1}-Y^k\rangle+\|\Psi^{k+1}-\Psi^{k}\|_\Theta^2\\
	&\leq \Big(1-\frac{2\mu L\gamma}{\mu+L}\Big)\|X^{k}-X^\star\|_{\gamma^{-1}}^2+\|\Psi^{k}-\Psi^\star\|_\Theta^2\\
	&\quad+\Big(1-\frac{2\gamma^{-1}}{\mu+L}\Big)\|\nabla F^k-\nabla F^\star\|_{\gamma}^2,\forall k\geq 0
	\ena
\end{lemma}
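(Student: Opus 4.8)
The plan is to recognize that \cref{lemma10} is merely \cref{lemma9} with the cross term transposed to the left-hand side, and that the proof of \cref{lemma9} transfers \emph{verbatim} to the Dyna-COLD iterates generated by \cref{alg_opt2}. Indeed, moving $2\langle\Psi^{k+1}-\Psi^\star,\widehat Y^{k+1}-Y^k\rangle$ in the inequality of \cref{lemma9} to its left-hand side reproduces exactly \cref{eq1_lemma10}. Thus the entire task reduces to checking that each step in the derivation of \cref{lemma9} remains valid when $\{X^k,\Psi^k,\widehat Y^k\}$ come from \cref{alg_opt2} rather than \cref{alg_opt1}.

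The key observation enabling this transfer is that the three updates $Y^k=X^k-\gamma\nabla F^k-\gamma\Psi^k$, $\Psi^{k+1}=\Psi^k+\tau(I-W)\widehat Y^{k+1}$ and $X^{k+1}=X^k-\gamma\nabla F^k-\gamma\Psi^{k+1}$ are shared identically between COLD \cref{alg_opt1} and Dyna-COLD \cref{alg_opt2}; the two algorithms differ \emph{only} in the step defining $\widehat Y^{k+1}$ (namely $\widehat Y^{k+1}=\widehat Y^k+\sQ(Y^k-\widehat Y^k)$ versus $\widehat Y^{k+1}=\widehat Y^k+s^k\sQ(\varDelta^k/s^k)$). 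Since the proofs of \cref{lemma6} and \cref{lemma9} invoke only these three shared updates, together with the fixed-point relations of \cref{fixed_point}, the strong-convexity/smoothness inequality of \cref{lemma8} (which uses only \cref{assum4}), and \cref{assum2}---but never the explicit compression formula---they apply unchanged here. Concretely, $\widehat Y^{k+1}$ enters the argument solely through $\tau^{-1}(\Psi^{k+1}-\Psi^k)=(I-W)\widehat Y^{k+1}$ and the resulting difference $\widehat Y^{k+1}-Y^k$, which is carried symbolically throughout; the identity therefore holds for \emph{any} $\widehat Y^{k+1}$ consistent with the $\Psi$-update, regardless of the scaling $s^k$.

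Carrying this out, I would first re-establish the exact identity \cref{eq3_lemma3} of \cref{lemma6} for the Dyna-COLD iterates, which follows line by line from \cref{alg_opt1a}, \cref{alg_opt1c}, \cref{alg_opt1d} and \cref{fixed_point}. I would then bound the two gradient cross terms exactly as in \cref{eq1_lemma9}: applying \cref{lemma8} to $2\langle\widetilde X^k,\nabla F^k-\nabla F^\star\rangle$, and using \cref{alg_opt1d} with the fixed-point identity $\Psi^\star=-\nabla F^\star$ to rewrite $X^{k+1}-X^k+\gamma(\nabla F^k-\nabla F^\star)=-\gamma\widetilde\Psi^{k+1}$, which produces the $\|\widetilde\Psi^{k+1}\|_\gamma^2$ term. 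Substituting this bound into the \cref{lemma6} identity and collecting terms yields \cref{eq1_lemma10} after transposing the inner product.

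There is no real analytical obstacle, as \cref{lemma10} is essentially a restatement of \cref{lemma9}; the care required is in the bookkeeping. The main point to verify rigorously is the compression-independence claim: one must track that $\widehat Y^{k+1}-Y^k$ appears \emph{only} inside the retained cross term and nowhere else, so that no hidden dependence on $s^k$ enters the bound. A secondary detail is the index range: \cref{lemma6} is stated for $k\geq1$ while \cref{lemma10} asserts $k\geq0$, so I would confirm the base case $k=0$ directly, using the initialization $\Psi^0=\Psi^1=0$ (and the corresponding $\widehat Y^1$ of \cref{alg_opt2}) to check that the three shared updates---and hence the identity and the inequality---are already valid at $k=0$.
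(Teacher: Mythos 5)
Your proposal is correct and matches the paper exactly: the paper's own proof of \cref{lemma10} is simply the remark that it ``follows from the same line of that of \cref{lemma9},'' which is precisely your observation that the argument via \cref{lemma6,lemma8} never invokes the compression step \cref{alg_opt1b}, so it transfers verbatim to \cref{alg_opt2} with the cross term $2\langle\widetilde\Psi^{k+1},\widehat Y^{k+1}-Y^k\rangle$ kept on the left-hand side. Your additional care about the base case $k=0$ and the compression-independence bookkeeping goes beyond what the paper writes but introduces no divergence in method.
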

\begin{proof}
	The proof follows from the same line of that of \cref{lemma9}, and hence we omit it here.
\end{proof}

\begin{lemma}\label{lemma4}
	Suppose \cref{assum4,assum2,assum3} hold. Then, for any $\gamma\in(0,\frac{2}{\mu+L})$ and $\zeta\in(0,\gamma)$, the following relation holds,
	\bea\label{eq1_lemma4}
	&\|X^{k+1}-X^\star\|_{\gamma^{-1}}^2+\|\Psi^{k+1}-\Psi^\star\|_{\Theta+\gamma I-\zeta I}^2+\|\Psi^{k+1}-\Psi^k\|_\Theta^2\\
	&\leq \nu\Big(\|X^{k}-X^\star\|_{\gamma^{-1}}^2+\|\Psi^{k}-\Psi^\star\|_{\Theta+\gamma I-\zeta I}^2\Big)\\
	&\quad+\zeta^{-1} \underline{c}\|\widehat Y^{k+1}-Y^k\|_\text{max}^2,\ \forall k\geq0,
	\ena
	where
	\bee\label{eq1_lemma3}
	\nu\triangleq\max\Big\{1-\frac{2\mu L\gamma}{\mu+L},1-\frac{(\gamma-\zeta)\tau(1-\lambda_2(W))}{1-\zeta\tau(1-\lambda_2(W))}\Big\}<1.
	\ene
\end{lemma}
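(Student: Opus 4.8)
The plan is to obtain \cref{eq1_lemma4} directly from \cref{lemma10} through three moves: discard the gradient term, split the inner product via Young's inequality, and then perform an eigenvalue comparison on $\text{range}(I-W)$ to fold the leftover $\Psi$-term into the contraction factor $\nu$. First I would start from the inequality of \cref{lemma10}. Since $\gamma<\frac{2}{\mu+L}$, we have $1-\frac{2\gamma^{-1}}{\mu+L}<0$, so the gradient term $\big(1-\frac{2\gamma^{-1}}{\mu+L}\big)\|\nabla F^k-\nabla F^\star\|_\gamma^2$ on the right-hand side is nonpositive and may simply be dropped, removing all gradient dependence.

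Next I would carry the cross term $-2\langle\widetilde\Psi^{k+1},\widehat Y^{k+1}-Y^k\rangle$ to the right-hand side and bound it by Young's inequality with the parameter $\zeta>0$, namely $2\langle\widetilde\Psi^{k+1},\widehat Y^{k+1}-Y^k\rangle\le\zeta\|\widetilde\Psi^{k+1}\|^2+\zeta^{-1}\|\widehat Y^{k+1}-Y^k\|^2$, and then invoke \cref{lemma7}(a) to replace the Euclidean norm of the innovation by its max-norm, which yields the term $\zeta^{-1}\underline c\,\|\widehat Y^{k+1}-Y^k\|_\text{max}^2$ recorded in the statement. Subtracting the resulting $\zeta\|\widetilde\Psi^{k+1}\|^2$ from the left-hand side converts the weight of the $\Psi^{k+1}$-block from $\Theta+\gamma I$ into $\Theta+\gamma I-\zeta I$; this is legitimate precisely because $\zeta<\gamma$ keeps $\Theta+(\gamma-\zeta)I$ positive on the relevant cone. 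At this point the left-hand side already matches the target, while the right-hand side carries the $X$-block with factor $1-\frac{2\mu L\gamma}{\mu+L}$ and the $\Psi^k$-block weighted by $\Theta$ rather than $\Theta+\gamma I-\zeta I$.

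It remains to show $\|\widetilde\Psi^k\|_\Theta^2\le\nu\,\|\widetilde\Psi^k\|_{\Theta+(\gamma-\zeta)I}^2$, after which, since the $X$-factor is already at most $\nu$, taking the larger of the two contraction factors gives $\nu$ uniformly. The key observation is that $\Psi^k-\Psi^\star\in\text{range}(I-W)$, as noted after \cref{fixed_point} from the initialization and \cref{alg_opt1c}. On this subspace the pseudoinverse acts as a genuine inverse and $(I-W)^\dagger$ has largest eigenvalue $(1-\lambda_2(W))^{-1}$, so the largest eigenvalue of $\Theta=\tau^{-1}(I-W)^\dagger-\gamma I$ there is $\lambda_1(\Theta)=\tau^{-1}(1-\lambda_2(W))^{-1}-\gamma$. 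Because $t\mapsto\frac{t}{t+(\gamma-\zeta)}$ is increasing for $t\ge0$, the Rayleigh quotient $\|\widetilde\Psi^k\|_\Theta^2/\|\widetilde\Psi^k\|_{\Theta+(\gamma-\zeta)I}^2$ is maximized at $\lambda_1(\Theta)$, and a direct simplification of $\frac{\lambda_1(\Theta)}{\lambda_1(\Theta)+\gamma-\zeta}=1-\frac{\gamma-\zeta}{\tau^{-1}(1-\lambda_2(W))^{-1}-\zeta}$ yields exactly $1-\frac{(\gamma-\zeta)\tau(1-\lambda_2(W))}{1-\zeta\tau(1-\lambda_2(W))}$, the second argument of the maximum defining $\nu$.

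I expect the eigenvalue step to be the main obstacle: one must argue that the iterate difference genuinely lies in $\text{range}(I-W)$ so that the pseudoinverse is invertible there, and then identify the extremal eigenvalue of $\Theta$ on that subspace with $\lambda_2(W)$ rather than $\lambda_n(W)$—this choice is exactly what produces the spectral-gap dependence $\rho=1-\lambda_2(W)$ in $\nu$. The remaining steps—dropping the nonpositive gradient term, the Young split, and the norm conversion via \cref{lemma7}—are routine once $\zeta$ and the weight $\Theta+\gamma I-\zeta I$ are set up consistently.
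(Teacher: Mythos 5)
Your proposal is correct and takes essentially the same route as the paper's own proof: start from \cref{lemma10}, drop the nonpositive gradient term using $\gamma<\frac{2}{\mu+L}$, apply Young's inequality with parameter $\zeta$ plus \cref{lemma7} to get the $\zeta^{-1}\underline{c}\|\widehat Y^{k+1}-Y^k\|_{\text{max}}^2$ term and shift $\zeta\|\widetilde\Psi^{k+1}\|^2$ to the left to form the weight $\Theta+\gamma I-\zeta I$, and finally absorb $\|\widetilde\Psi^{k}\|_{\Theta}^2$ through the spectral comparison $\Theta\preceq\big(1-\frac{(\gamma-\zeta)\tau(1-\lambda_2(W))}{1-\zeta\tau(1-\lambda_2(W))}\big)(\Theta+\gamma I-\zeta I)$. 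The only cosmetic difference is that the paper writes this last step as a matrix ordering while you phrase it as a Rayleigh-quotient maximization over the eigenvalues of $\Theta$ on $\text{range}(I-W)$; the two are the same argument.
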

\begin{proof}
	It follows from \cref{eq1_lemma10} in \cref{lemma10}, $\gamma\leq\frac{2}{\mu+L}$, and the Cauchy-Schwarz inequality that
	\bea\label{eq2_lemma10}
	&\|\widetilde X^{k+1}\|_{\gamma^{-1}}^2+\|\widetilde \Psi^{k+1}\|_{\Theta+\gamma I}^2+\|\Psi^{k+1}-\Psi^k\|_\Theta^2\\
	&\leq \big(1-\frac{2\mu L\gamma}{\mu+L}\big)\|\widetilde X^k\|_{\gamma^{-1}}^2+\|\widetilde \Psi^k\|_{\Theta}^2\\
	&\quad+\zeta\|\widetilde \Psi^{k+1}\|^2+\zeta^{-1}\|\widehat Y^{k+1}-Y^k\|^2.
	\ena
	Since $\zeta\in(0,\gamma)$, we have
	\bea
	\Theta&\preceq \frac{\tau^{-1}(1-\lambda_2(W))^{-1}-\gamma}{\tau^{-1}(1-\lambda_2(W))^{-1}-\zeta} (\Theta+\gamma I-\zeta I)\\
	&\preceq\Big(1-\frac{(\gamma-\zeta)\tau(1-\lambda_2(W))}{1-\zeta\tau(1-\lambda_2(W))}\Big)(\Theta+\gamma I-\zeta I).
	\ena
	It then follows from \cref{eq2_lemma10} and \cref{lemma7} that
	\bea
	&\|\widetilde X^{k+1}\|_{\gamma^{-1}}^2+\|\widetilde \Psi^{k+1}\|_{\Theta+\gamma I-\zeta I}^2+\|\Psi^{k+1}-\Psi^k\|_\Theta^2\\
	&\leq \big(1-\frac{2\mu L\gamma}{\mu+L}\big)\|\widetilde X^k\|_{\gamma^{-1}}^2+\zeta^{-1} \underline{c}\|\widehat Y^{k+1}-Y^k\|_\text{max}^2\\
	&\quad+\Big(1-\frac{(\gamma-\zeta)\tau(1-\lambda_2(W))}{1-\zeta\tau(1-\lambda_2(W))}\Big)\|\widetilde \Psi^k\|_{\Theta+\gamma I-\zeta I}^2
	\ena
	which is the desired result \cref{eq1_lemma4}.

\end{proof}

\begin{lemma}\label{lemma11}
	Suppose \cref{assum4,assum2,assum3} hold. Let $\tau>0$ and $\widetilde\tau\in(0,1)$. If $\gamma\in\big(0,\min\{\frac{2}{\mu+L},\frac{1}{L}\sqrt{\frac{\widetilde\tau}{1-\widetilde\tau}},\frac{\widetilde\tau}{\tau(1-\lambda_n(W))}\}\big)$, then
	\bea\label{eq1_lemma11}
	&\|\widehat Y^{k+1}-Y^{k+1}\|_{\text{max}}^2\\
	&\leq\frac{8\gamma\widetilde\tau\Big(\|\widetilde \Psi^{k+1}\|_{\Theta+\frac{\gamma}{2} I}^2+\|\widetilde X^{k+1}\|_{\gamma^{-1}}^2+\|\Psi^k-\Psi^{k+1}\|_{ \Theta}^2\Big)}{(1-\delta)^2(1-\widetilde\tau)} \\
	&\quad+\frac{2}{1+\delta}\|\widehat Y^{k+1}-Y^{k}\|_{\text{max}}^2\\
	\ena
\end{lemma}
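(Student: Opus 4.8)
The plan is to track the one-step evolution of the innovation $\widehat Y^{k+1}-Y^{k+1}$ and split it into the post-compression residual $\widehat Y^{k+1}-Y^k$ carried over from the previous step plus a correction equal to $\gamma$ times a ``distance-to-fixed-point'' vector. First I would note that the $X$-, $\Psi$- and $Y$-updates of \cref{alg_opt2} are formally identical to those of COLD, so the algebra behind \cref{eq2_lemma5} still applies; combined with the fixed-point identity $\Psi^\star=-\nabla F^\star$ from \cref{fixed_point}, this yields the recursion
\begin{equation}
Y^{k+1}-Y^k=-\gamma\big(\widetilde\Psi^{k+1}+(\nabla F^{k+1}-\nabla F^\star)+(\Psi^{k+1}-\Psi^k)\big).
\end{equation}
Writing $v\triangleq \widetilde\Psi^{k+1}+(\nabla F^{k+1}-\nabla F^\star)+(\Psi^{k+1}-\Psi^k)$ gives the exact decomposition $\widehat Y^{k+1}-Y^{k+1}=(\widehat Y^{k+1}-Y^k)+\gamma v$.

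Next I would apply the elementary weighted inequality $\|a+b\|_\text{max}^2\le(1+c)\|a\|_\text{max}^2+(1+c^{-1})\|b\|_\text{max}^2$ (valid for any norm) with $a=\widehat Y^{k+1}-Y^k$, $b=\gamma v$ and the specific choice $c=\tfrac{1-\delta}{1+\delta}$. This is exactly the choice that turns the first coefficient into $\tfrac{2}{1+\delta}$, matching the carry-over term in the statement, and leaves $\tfrac{2}{1-\delta}\gamma^2\|v\|_\text{max}^2$ to be absorbed into the target expression.

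The remaining and most delicate step is to bound $\|v\|_\text{max}^2$ by the three weighted Euclidean quantities on the right-hand side, where each of the three constraints on $\gamma$ is used exactly once. After converting the max-norm to the Euclidean norm via \cref{lemma7}(a) and splitting $v$ into its three summands by Cauchy--Schwarz, I would bound: (i) the gradient piece through $L$-smoothness (\cref{assum4}), $\|\nabla F^{k+1}-\nabla F^\star\|^2\le L^2\gamma\|\widetilde X^{k+1}\|_{\gamma^{-1}}^2$, and then use $\gamma\le \tfrac1L\sqrt{\widetilde\tau/(1-\widetilde\tau)}$ to replace $L^2\gamma$ by $\tfrac{\widetilde\tau}{\gamma(1-\widetilde\tau)}$; (ii)--(iii) the two dual pieces $\widetilde\Psi^{k+1}$ and $\Psi^{k+1}-\Psi^k$, both of which lie in $\text{range}(I-W)$ (since $\Psi^0=0$, each increment equals $\tau(I-W)\widehat Y^{k+1}$, and $\Psi^\star\in\text{range}(I-W)$ by \cref{fixed_point}). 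On this subspace $\Theta=\tau^{-1}(I-W)^\dagger-\gamma I$ has smallest eigenvalue $\tfrac{1}{\tau(1-\lambda_n(W))}-\gamma\ge \tfrac{(1-\widetilde\tau)\gamma}{\widetilde\tau}$ precisely because $\gamma\le\tfrac{\widetilde\tau}{\tau(1-\lambda_n(W))}$, so $\|\cdot\|^2\le\tfrac{\widetilde\tau}{\gamma(1-\widetilde\tau)}\|\cdot\|_\Theta^2$ there; for $\widetilde\Psi^{k+1}$ I would additionally use $\Theta\preceq\Theta+\tfrac{\gamma}{2}I$ to match the weighting in the statement. Collecting the three bounds together with the prefactor $\tfrac{2}{1-\delta}$ produces the factor $\tfrac{8\gamma\widetilde\tau}{(1-\delta)^2(1-\widetilde\tau)}$.

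I expect the constant bookkeeping to be the main obstacle rather than any single inequality: the three-term split, the $\tfrac{2}{1-\delta}$ from the Young step, and the max-to-Euclidean conversion constant $\bar{c}$ of \cref{lemma7}(a)---which equals $1$ for $p\ge2$---must all be over-estimated into the clean prefactor (the exponent $(1-\delta)^2$ and the loose numerator $8$ in place of the tighter $6$ give exactly this slack), and one must keep careful track of the null space of $\Theta$ so that the spectral lower bound is invoked only on $\text{range}(I-W)$, where it holds. The constraint $\gamma\le\tfrac{2}{\mu+L}$ plays no direct role in this particular inequality and is retained only to stay within the standing hypotheses under which the underlying recursion and \cref{lemma10} are valid.
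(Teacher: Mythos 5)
Your proposal is correct and follows essentially the same route as the paper's own proof: the identical decomposition $\widehat Y^{k+1}-Y^{k+1}=(\widehat Y^{k+1}-Y^k)+\gamma\big(\widetilde\Psi^{k+1}+(\nabla F^{k+1}-\nabla F^\star)+(\Psi^{k+1}-\Psi^k)\big)$, the same Young step with parameter $\tfrac{1-\delta}{1+\delta}$ producing the $\tfrac{2}{1+\delta}$ carry-over coefficient, $L$-smoothness plus $\gamma\le\tfrac1L\sqrt{\widetilde\tau/(1-\widetilde\tau)}$ for the gradient piece, and the spectral lower bound of $\Theta$ on $\text{range}(I-W)$ (via $\gamma\le\widetilde\tau/(\tau(1-\lambda_n(W)))$) for the two dual pieces. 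The only differences are bookkeeping---you use one uniform three-way Cauchy--Schwarz split where the paper nests two further weighted Young steps with parameters $\xi_2,\xi_3$---and your caveat that the clean prefactor absorbs the max-to-Euclidean constant only when $\bar c\le1$ (i.e., $p\ge2$) is a looseness equally present in the paper's own proof, whose final display still carries $\bar c$.
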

\begin{proof}
	Let $\xi_1,\xi_2,\xi_3$ be some positive numbers. We have
	\bea
	&\|\widehat Y^{k+1}-Y^{k+1}\|_\text{max}^2\\
	&\overset{\cref{alg_opt2}}{=}\|\widehat Y^{k+1}-Y^k-\gamma(\Psi^k-\Psi^{k+1})\\
	&\quad+\gamma(\Psi^{k+1}-\Psi^\star)+\gamma(\nabla F^{k+1}-\nabla F^\star)\|_\text{max}^2\\
	&\leq (1+\xi_1)\|\widehat Y^{k+1}-Y^k\|_\text{max}^2+(1+\xi_1^{-1})(1+\xi_2)\bar{c}\|\widetilde \Psi^{k+1}\|_{\gamma^2}^2\\
	&\quad+(1+\xi_1^{-1})(1+\xi_2^{-1})\bar{c}\gamma^2\times\\
	&\quad\Big((1+\xi_3)\|\nabla F^{k+1}-\nabla F^\star\|^2+(1+\xi_3^{-1})\|\Psi^k-\Psi^{k+1}\|^2\Big)\\
	&\overset{\cref{eq_lipschitz}}{\leq}(1+\xi_1)\|\widehat Y^{k+1}-Y^k\|_\text{max}^2+ (1+\xi_1^{-1})(1+\xi_2)\bar{c}\gamma\Big(\|\widetilde \Psi^{k+1}\|_{\gamma}^2\\
	&\;+\xi_2^{-1}\gamma\big((1+\xi_3)L^2\|\widetilde X^{k+1}\|^2+(1+\xi_3^{-1})\|\Psi^k-\Psi^{k+1}\|^2\big)\Big)\\
	\ena
	where the first inequality used the Cauchy-Schwarz inequality three times. Set $\xi_1=\xi_2={(1-\delta)}/{(1+\delta)}>0$ and $\xi_3=\left({\gamma L^2(\tau^{-1}(1-\lambda_n(W))^{-1}-\gamma)}\right)^{-1}>0$. We have
	\bea\label{eq2_lemma11}
	&\|\widehat Y^{k+1}-Y^{k+1}\|_\text{max}^2\\
	&\leq(1+\xi_1)\|\widehat Y^{k+1}-Y^k\|_\text{max}^2+ (1+\xi_1^{-1})(1+\xi_2)\bar{c}\gamma\Big(\|\widetilde \Psi^{k+1}\|_{\gamma}^2\\
	&\quad+\frac{\xi_2^{-1}\gamma(1+\xi_3^{-1})\big(\|\widetilde X^{k+1}\|_{\gamma^{-2}}^2+\|\Psi^k-\Psi^{k+1}\|_{\psi-1}^2\big)}{\psi-1}\Big)\\
	&\leq\frac{2\|\widehat Y^{k+1}-Y^k\|_\text{max}^2}{1+\delta}\\
	&\quad+ (1+\xi_1^{-1})(1+\xi_2)\bar{c}\gamma^2\Big((\psi-\frac{1}{2})\|\widetilde \Psi^{k+1}\|_{\gamma (\Theta+\frac{\gamma}{2} I)}^2\\
	&\quad+\frac{\xi_2^{-1}(1+\xi_3^{-1})\big(\|\widetilde X^{k+1}\|^2+\|\Psi^k-\Psi^{k+1}\|_{\gamma \Theta}^2\big)}{\gamma(\psi-1)}\Big)\\
	&\leq\frac{2\|\widehat Y^{k+1}-Y^{k}\|_{\text{max}}^2}{1+\delta}\\
	&\quad+ \bar{c}\gamma \vartheta \Big(\|\widetilde \Psi^{k+1}\|_{\Theta+\frac{\gamma}{2} I}^2+\|\widetilde X^{k+1}\|_{\gamma^{-1}}^2+\|\Psi^k-\Psi^{k+1}\|_{ \Theta}^2\Big)\\
	\ena
	where $\psi=\tau^{-1}\gamma^{-1}(1-\lambda_n(W))^{-1}$ and
	\bea
	\vartheta&=\gamma\max\Big\{(1+\xi_1^{-1})(1+\xi_2)\gamma(\psi-\frac{1}{2}),\\
	&\qquad\qquad\quad\frac{(1+\xi_1^{-1})(1+\xi_2^{-1})(1+\xi_3^{-1})}{\tau^{-1}(1-\lambda_n(W))^{-1}-\gamma}\Big\}\\
	&=\max\Big\{\frac{4\gamma(\tau^{-1}(1-\lambda_n(W))^{-1}-\frac{\gamma}{2})}{1-\delta^2},\\
	&\qquad\qquad\frac{4\gamma^2 L^2}{(1-\delta)^2}+\frac{4\gamma\tau(1-\lambda_n(W))}{(1-\delta)^2(1-\gamma\tau(1-\lambda_n(W)))}\Big\}\\
	&\leq\frac{8\widetilde\tau}{(1-\delta)^2(1-\widetilde\tau)}
	\ena
	and the last inequality follows from $\gamma\leq\frac{1}{L}\sqrt{\frac{\widetilde\tau}{1-\widetilde\tau}}$. The desired result follows immediately.
\end{proof}

\subsection{Proof of \cref{theo3}}
\begin{proof}
	We prove the result by mathematical induction. Suppose that $\|\widehat Y^{k}-Y^{k}\|_\text{max}^2\leq c\beta^k$ and $\|X^{k}-X^\star\|_{\gamma^{-1}}^2+\|\Psi^{k}-\Psi^\star\|_{\Theta+\frac{\gamma}{2} I}^2\leq \widetilde c\beta^k$. Clearly, it holds when $k=0$. Then, we have from \cref{assum3} that
	\bee\label{eq3_lemma10}
	\|\widehat Y^{k+1}-Y^k\|_\text{max}^2=\|s^k\sQ(\varDelta^k/s^k)-\varDelta^k\|_\text{max}^2\leq \delta s^k=\delta c\beta^k.
	\ene
	We have from \cref{lemma4,lemma11} that
	\bea\label{eq2_theo3}
	&\|X^{k+1}-X^\star\|_{\gamma^{-1}}^2+\|\Psi^{k+1}-\Psi^\star\|_{\Theta+\frac{\gamma}{2} I}^2\\
	&\overset{\cref{eq1_lemma4}}{\leq}\nu\widetilde c\beta^k+2\gamma^{-1} \underline{c}\delta c\beta^k
	\ena
	and
	\bea\label{eq1_theo3}
	&\|\widehat Y^{k+1}-Y^{k+1}\|_\text{max}^2\\
	&\overset{\cref{eq1_lemma11},\cref{eq1_lemma4}}{\leq}\frac{8\bar{c}\gamma\widetilde\tau\nu\widetilde c\beta^k}{(1-\delta)^2(1-\widetilde\tau)}+\Big(\frac{16\widetilde\tau \underline{c}\bar{c}}{(1-\delta)^2(1-\widetilde\tau)}+\frac{2}{1+\delta}\Big)\delta c\beta^k
	\ena
	where we have set $\zeta=\frac{\gamma}{2}$ and used the hypothesis and \cref{eq3_lemma10}.

	Define the following two functions:
	\bea
	g_1(t)\triangleq\nu+2\gamma^{-1} \underline{c}\delta t\\
	\ena
	\bea
	g_2(t)\triangleq\frac{2\delta}{1+\delta}+\frac{16\delta\widetilde\tau \underline{c}\bar{c}}{(1-\delta)^2(1-\widetilde\tau)}+\frac{8\bar{c}\gamma\widetilde\tau\nu}{t(1-\delta)^2(1-\widetilde\tau)}.
	\ena
	In view of \cref{eq2_theo3,eq1_theo3}, it is sufficient to show that $g_1(\varsigma)\leq\beta$ and $g_2(\varsigma)\leq\beta$ since $\varsigma=c/\widetilde c$. To this end, notice that $0<\varsigma_1<\varsigma_2$, since
	\bea
	&\Big(\frac{(1-\delta)^3(1-\widetilde\tau)}{1+\delta}-16\delta\widetilde\tau \underline{c}\bar{c}\Big)(\varsigma_1-\varsigma_2)\\
	&=8\bar{c}\gamma\widetilde\tau-\frac{\gamma(1-\delta)^3(1-\widetilde\tau)(1-\nu)}{2\delta(1+\delta) \underline{c}}\leq 0
	\ena
	where we used $\widetilde\tau\leq \frac{1}{2}$ and the bounds of $\gamma$ and $\tau$ in \cref{theo3}.

	An important relation is that $g_1(\varsigma_2)=1$ and $g_2(\varsigma_1)=1$, which can be readily checked. Moreover, $g_1$ is strictly increasing and $g_2$ is strictly decreasing w.r.t. $\varsigma$, and both are convex functions. Therefore, we have
	\bee
	g_1(\varsigma)<\frac{g_1(\varsigma_1)+g_1(\varsigma_2)}{2}\leq\frac{1+\beta_1}{2}\leq\beta
	\ene
	and
	\bee
	g_2(\varsigma)<\frac{g_2(\varsigma_1)+g_2(\varsigma_2)}{2}\leq\frac{1+\beta_2}{2}\leq\beta
	\ene
	where $\beta_1=g_1(\zeta_1)=\frac{(1-\delta)^3(1-\widetilde\tau)\nu}{(1-\delta)^3(1-\widetilde\tau)-16\delta\widetilde\tau(1+\delta) c_{p}}<1$, $\beta_2=g_2(\zeta_2)=\frac{2\delta}{1+\delta}+\frac{16\delta\widetilde\tau c_{p}}{(1-\delta)^2(1-\nu)(1-\widetilde\tau)}<1$, and we used $c_p=\bar{c}\underline{c}\leq\sqrt{n}d^{|\frac{1}{2}-\frac{1}{\vp}|}$. Therefore, we finished the induction.
\end{proof}

\begin{IEEEbiography}
	[{\includegraphics[width=1in,height=1.25in,clip,keepaspectratio]{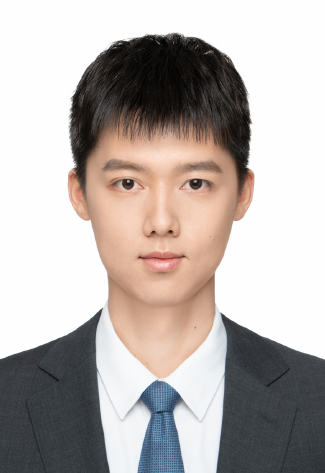}}]
	{Jiaqi Zhang} received the B.S. degree in electronic and information engineering from the School of Electronic and Information Engineering, Beijing Jiaotong University, Beijing, China, in 2016. He is currently pursuing the Ph.D. degree at the Department of Automation, Tsinghua University, Beijing, China. His current research interests include networked control systems, distributed optimization and learning, and their applications.
\end{IEEEbiography}
\begin{IEEEbiography}
	[{\includegraphics[width=1in,height=1.25in,clip,keepaspectratio]{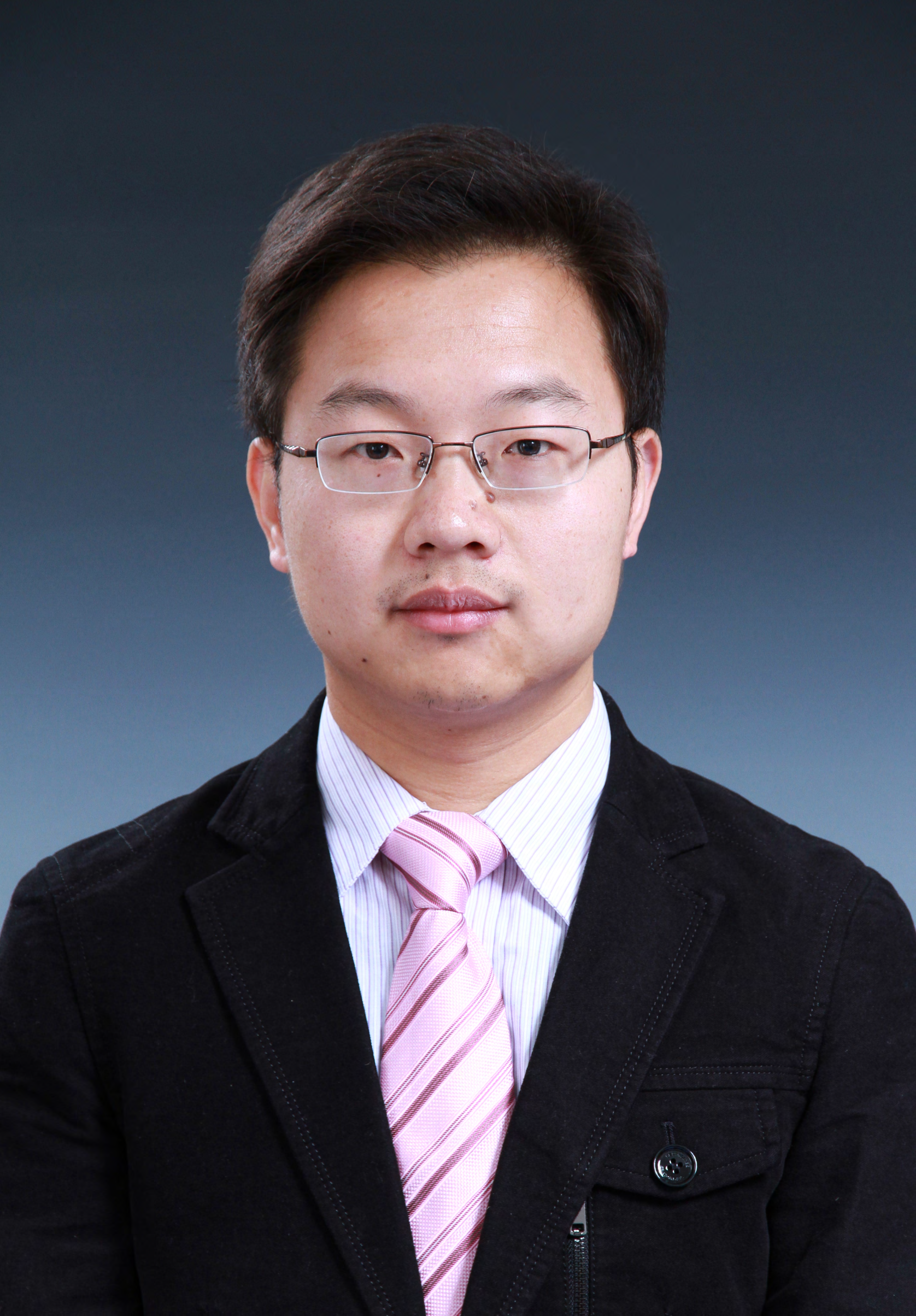}}]
	{Keyou You} (SM'17) received the B.S. degree in Statistical Science from Sun Yat-sen University, Guangzhou, China, in 2007 and the Ph.D. degree in Electrical and Electronic Engineering from Nanyang Technological University (NTU), Singapore, in 2012. After briefly working as a Research Fellow at NTU, he joined Tsinghua University in Beijing, China where he is now a tenured Associate Professor in the Department of Automation. He held visiting positions at Politecnico di Torino, Hong Kong University of Science and Technology, University of Melbourne and etc. His current research interests include networked control systems, distributed optimization and learning, and their applications.

	Dr. You received the Guan Zhaozhi award at the 29th Chinese Control Conference in 2010 and the ACA (Asian Control Association) Temasek Young Educator Award in 2019. He received the National Science Fund for Excellent Young Scholars in 2017. He is serving as an Associate Editor for the IEEE Transactions on Control of Network Systems, IEEE Transactions on Cybernetics, IEEE Control Systems Letters(L-CSS), Systems \& Control Letters.
\end{IEEEbiography}

\begin{IEEEbiography}
	[{\includegraphics[width=1in,height=1.25in,clip,keepaspectratio]{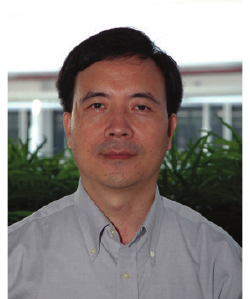}}]
	{Lihua Xie} (F'07) received the B.E. and M.E. degrees in electrical engineering from Nanjing University of Science and Technology in 1983 and 1986, respectively, and the Ph.D. degree in electrical engineering from the University of Newcastle, Australia, in 1992. Since 1992, he has been with the School of Electrical and Electronic Engineering, Nanyang Technological University, Singapore, where he is currently a professor and Director, Delta-NTU Corporate Laboratory for Cyber-Physical Systems. He served as the Head of Division of Control and Instrumentation from July 2011 to June 2014. He held teaching appointments in the Department of Automatic Control, Nanjing University of Science and Technology from 1986 to 1989 and Changjiang Visiting Professorship with South China University of Technology from 2006 to 2011.

	His research interests include robust control and estimation, networked control systems, multiagent networks, localization, and unmanned systems. He is an Editor-in-Chief for Unmanned Systems and has served as the Editor of IET Book Series in Control and an Associate Editor for a number of journals including the IEEE Transactions on Automatic Control, Automatica, the IEEE Transactions on Control Systems Technology, IEEE Transactions on Network Control Systems, and the IEEE Transactions on Circuits and Systems-II. He was the IEEE Distinguished Lecturer from January 2012 to December 2014 and an Elected Member of Board of Governors, the IEEE Control System Society from January 2016 to December 2018. He is a Fellow of IFAC and Fellow of Academy of Engineering Singapore.	
\end{IEEEbiography}

\end{document}